\newtheorem{theorem}{Theorem}
\newtheorem{remark}{Remark}
\newtheorem{proposition}{Proposition}
\newtheorem{lemma}[proposition]{Lemma}
\newtheorem{corollary}[proposition]{Corollary}
\newtheorem*{claim*}{Claim}
\newtheorem{definition}[proposition]{Definition}
\def\ie{{\em i.e.,\ }}
\def\eg{{\em e.g.\ }}
\newfont\bbf{msbm10 at 12pt}
\def\eps{\varepsilon}
\def\R{{\mathbb R}}
\def\C{{\mathbb C}}
\def\N{{\mathbb N}}
\def\Z{{\mathbb Z}}
\def\K{{\mathbb K}}
\def\L{{\mathbb L}}
\def\diam{\mbox{\rm diam} }
\def\area{\mbox{\rm area} }
\def\orb{\mbox{\rm orb}}
\def\Crit{\mbox{\rm Crit}}
\def\crit{\text{\tiny crit}}
\def\le{\leqslant}
\def\ge{\geqslant}
\def\1{ {\hbox{{\it 1}} \!\! I} }
\def\feig{{\mbox{\tiny feig}}}
\def\qfeig{{\mbox{\tiny q-feig}}}
\def\cfeig{{\mbox{\tiny feig}}}
\def\MP{{\mbox{\tiny MP}}}
\def\TM{{\mbox{\tiny TM}}}
\def\Var{{\mbox{Var}}}
\newdimen\AAdi%
\newbox\AAbo%
\def\AArm{\fam0 }%\tenrm}%
\def\AAk#1#2{\setbox\AAbo=\hbox{#2}\AAdi=\wd\AAbo\kern#1\AAdi{}}%
\def\AAr#1#2#3{\setbox\AAbo=\hbox{#2}\AAdi=\ht\AAbo\raise#1\AAdi\hbox{#3}}%
\def\BBone{{\AArm 1\AAk{-.8}{I}I}}%
\newcommand {\CA}{{\mathcal A}}
\newcommand {\CC}{{\mathcal C}}
\newcommand {\CH}{{\mathcal D}}
\newcommand {\CJ}{{\mathcal J}}
\newcommand {\CL}{{\mathcal L}}
\newcommand {\CP}{{\mathcal P}}
\newcommand {\CR}{{\mathcal R}}
\newcommand {\CW}{{\mathcal W}}
\def\llb{[\![} \def\rrb{]\!]}
\newcommand{\disp}{\displaystyle}
\newcommand{\8}{\infty}
\def\m1{{-1}}
\newcommand{\ninf}{{n\rightarrow\8}}
\def\S{\Sigma}
\def\s{\sigma}
\newcommand{\wt}{\widetilde}
\newcommand{\ul}[1]{\underline{#1}}
\begin{document}
\synctex=1

\title[Renormalization and thermodynamics]
{Renormalization, thermodynamic formalism  and quasi-crystals in subshifts.}
\author{Henk Bruin and Renaud Leplaideur}
\date{Version of \today}
\thanks{Part of this research was supported by a Scheme 3 (ref 2905) 
visitor grant of 
the London Mathematical Society and a visiting professorship at the
University of Brest. %Universit\'e de Bretagne Occidentale
}

\begin{abstract}
We examine thermodynamic formalism for a class of renormalizable dynamical
systems which in the symbolic space is generated by the Thue-Morse 
substitution, and in complex dynamics by the Feigenbaum-Coullet-Tresser map.
The basic question answered is whether fixed points $V$ of a renormalization
operator $\CR$ acting on the space of potentials are such that
the pressure function
$\gamma \mapsto \CP(-\gamma V)$ exhibits phase transitions.
This extends the work by Baraviera, Leplaideur and Lopes on the 
Manneville-Pomeau map, where such phase transitions were indeed detected.
In this paper, however, the attractor of renormalization is a Cantor set
(rather than a single fixed point), which admits various classes of 
fixed points of $\CR$, some of which do and some of which do not exhibit
phase transitions.
In particular, we show it is possible to reach, as a ground state, 
a quasi-crystal before temperature zero by freezing a dynamical system. 
\end{abstract}

\maketitle

%%%%%%%%%%%%%%%%%%%%%%%%%%%%%%%%%%
\section{Introduction}\label{sec:intro}
%%%%%%%%%%%%%%%%%%%%%%%%%%%%%%%%%%%%%
\subsection{Background}
Phase transitions are a central theme in statistical mechanics and 
probability theory. 
In the physics/probability approach the dynamics is not very relevant and just 
emerges as a by-product of the invariance by translation. The main difficulty is the geometry of the $\Z^{d}$ lattice. Considering an interacting particle systems such as the Ising model (see \eg \cite{gallavotti1, georgii}), it is possible to find a measure (called Gibbs measure) that maximizes the probability of
obtaining a configuration with minimal free energy associated to a Hamiltonian. This is done considering a finite box and fixing the conditions on its boundary. Then letting the size of the box tend to infinity, the sequence of Gibbs
measures have a set of accumulation points. If this set varies non-continuously with respect to the parameters (including the temperature), 
then the system is said to exhibit a \emph{phase transition}. 

In contrast, the time evolution of the system is the central theme in 
dynamics systems. The theory of thermodynamic formalism has been imported 
into hyperbolic dynamics in the 70's, essentially by Sinai, Ruelle and Bowen. 
Gradually, authors started to extend this theory to the non-uniformly 
hyperbolic case, sometimes
applying \emph{inducing techniques} that are also important in this paper. 
Initially, phase transitions have been less central in dynamical systems, 
but the development of the theory of ergodic optimization since the 2000's has 
naturally led mathematicians to introduce (or rather rediscover) the 
notion of ground states. The question of phase transitions arises naturally
in this context. 

Note that vocabulary used in statistical mechanics is sometimes quite different
from that used in dynamical system. What in statistical mechanics vocabulary 
is called a ``freezing'' transition, such as occur in 
Fisher-Felderhof models (see \eg \cite{fisher}), corresponds 
in the mathematical vocabulary to the Manneville-Pomeau map or the
shift with Hofbauer potential (see \eg \cite{wang} or 
\cite[Exercise 5.8 on page 98]{ruelle} and also \cite{gallavotti2}).
 
Renormalization is an over-arching theme in physics and dynamics,
including thermodynamic formalism, see \cite{CGU} for modern results
in the direction.
The system that we study in this paper is related to 
 cascade of doubling period phenomenon and 
the infinitely renormalizable maps {\it \`a la} Feigenbaum-Coullet-Tresser,
which is on the boundary of chaos (see \eg \cite{schuster}).
Instead of the freezing transitions, the system has its
equilibrium state (at phase transition) supported on a Cantor set rather 
than in a fixed point or a periodic orbit. Stated in physics terminology, we prove that it is possible to reach a quasi-crystal as a ground state before 
temperature zero by freezing a dynamical system (see Theorems \ref{theo-thermo-a<1} and \ref{theo-super-MP}). This issue is related to a question due to van Enter (see \cite{vEM}). The original question was for $\Z^{2}$-actions, but we hope that ideas here may be exported to this more complicated case. 

Returning to the mathematical motivation, the present paper takes the work of 
\cite{baraviera-leplaideur-lopes} a step further.
We investigate the connections between phase transition in the full $2$-shift, renormalization for potentials, renormalization for maps (in complex dynamics) and substitutions in the full $2$-shift. Here the attractor of renormalization
is a Cantor set, rather than a single point, and its thermodynamic
properties turn out to be strikingly different.

We recall that Bowen's work \cite{Bowen} on thermodynamic formalism showed 
that every subshift of finite type with H\"older continuous potential $\phi$
admits a unique equilibrium state (which is a Gibbs measure). Moreover, the
pressure function $\gamma \mapsto \CP(-\gamma \phi)$ is real analytic and  there are no phase transitions.
This is also known as the Griffiths-Ruelle theorem. 
Hofbauer \cite{Hnonuni} was the first (in the dynamical systems world) 
to find continuous non-H\"older potentials
for the full two-shift $(\S, \s)$
allowing a phase transition at some $t = t_{0}$.

A geometric interpretation of Hofbauer's example leads naturally
to the Manneville-Pomeau map $f_\MP:[0,1] \to [0,1]$ defined as
$$
f_\MP(x) = \left\{ \begin{array}{ll}
\frac{x}{1-x} & \text{ if } x \in [0, \frac12], \\
2x-1 & \text{ if } x \in (\frac12, 1],
\end{array}\right.
$$
with a neutral fixed point at $0$.
This map admits a local renormalization $\psi(x) = \frac{x}{2}$ which 
satisfies

\begin{equation}
\label{equ-renorm-MP}
f_\MP^2 \circ \psi(x) = \psi \circ f_\MP(x) \qquad \text{ for all } 
x \in  [0, \frac12].
\end{equation}
If we differentiate Equation~\eqref{equ-renorm-MP}, take logarithms 
and subtract $\log \psi'\equiv \log\frac12$ from both sides 
of the equality, we find 

\begin{equation}
\label{equ2-renom-mp-pot}
\log|f_\MP'| =  \log |f_\MP'| \circ f_\MP \circ \psi(x) + \log |f_\MP'| \circ \psi(x).
\end{equation}

Passing to the shift-space again (via the itinerary map for the
 standard partition $\{[0,\frac12], \ (\frac12,1]\}$),
 we are  naturally led to renormalization  in the shift. 
Of prime importance are the solutions of the equation 
\begin{equation}
\label{equ-renorm-shift}
\s^{2}\circ H=H\circ \s,
\end{equation}
which replaces the renormalization scaling $\psi$ in \eqref{equ-renorm-MP}. 
Equation~\eqref{equ2-renom-mp-pot} leads to an operator $\CR$ defined by 
$$
\CR(V) = V \circ \s \circ H + V \circ H.
$$
In \cite{baraviera-leplaideur-lopes}, the authors investigated the case of the substitution
$$
 H_\MP:\left\{ \begin{array}{l} 0 \to 00, \\ 1 \to 01, \end{array} \right. 
$$
which has a unique fixed point $ 0^\8$, corresponding to the neutral fixed
point $0$ of $f_\MP$. In \cite{baraviera-leplaideur-lopes}, the map $H_\MP$ was not presented as a substitution but  we emphasize here  (and it is an improvement because it allows more general studies) 
that it indeed is; more generally, any constant-length $k$ substitution
solves Equation~\eqref{equ-renorm-shift} (with $\s^{k}$ instead of $\s^{2}$). 
It is also shown in \cite{baraviera-leplaideur-lopes} that the operator $\CR$ 
fixes the Hofbauer potential 
$$
V(x):=\log\frac{n+1}{n} \quad  \text{ if }
x \in [0^{n}1] \setminus [0^{n+1}1], \quad n>0.
$$ 
Moreover, the lift of $\log f'_\MP$ belongs to the \emph{stable set} of the Hofbauer potential. This fact is somewhat mysterious because the substitution $H_\MP$ \emph{is not} the lift of the scaling function $\psi:x\mapsto x/2$. 

\bigskip
In this paper we focus on the Thue-Morse substitution; see \eqref{eq:HTM} 
for the definition. 
It is one of the simplest substitutions satisfying  the renormalization equality  \eqref{equ-renorm-shift} and contrary to $H_\MP$, the attractor for the 
Thue-Morse substitution, 
say $\K$, is not a periodic orbit but a Cantor set.
Yet similarly to the Manneville-Pomeau fixed point, 
$\s:\K \to \K$ has zero entropy and is uniquely ergodic. This is one way to define quasi-crystal in ergodic theory. 

The thermodynamic formalism 
 for the Thue-Morse substitution is much more complicated, and more 
interesting, than for the Manneville-Pomeau substitution. This is because
Cantor structure of the attractor admits a more intricate recursion 
behavior of nearby points (although it has zero entropy) 
characterized by what we call ``accidents'' in 
Section~\ref{subsec-theo-vtilde}, which
are responsible for the lack of phase transitions for the ``good'' 
fixed point for $\CR$, %(see Theorem~\ref{theo-thermo-a=1}). 
This allows much more chaotic shadowing than  when the 
attractor of the substitution is a periodic orbit. 
We want to emphasize here that our results are extendible to more 
general substitutions, but to get the main ideas across, we focus
on the Thue-Morse shift in this paper.

%%%%%%%%%%%%%%%%%%%%
\subsection{Statements of results}
The Thue-Morse substitution
\begin{equation}\label{eq:HTM} 
H := H_\TM: \left\{ \begin{array}{l} 0 \to 01 \\ 1 \to 10 \end{array} \right.
\end{equation}
has two fixed points
$$
\rho_1 = 1001\ 0110\ 1001\ 0110\ 01\dots \quad \text{ and } \quad 
\rho_0 = 0110\ 1001\ 0110\ 1001\ 10\dots
$$
Let $\K = \overline{\cup_n \s^n(\rho_0)} = \overline{\cup_n \s^n(\rho_1)}$ be the corresponding subshift of the full shift $(\S, \s)$ 
on two symbols. 
The renormalization equation~\eqref{equ-renorm-shift}  holds in $\S$: $H \circ \s = \s^2 \circ H$, and we define the {\em renormalization operator} acting on functions $V:\S \to \R$
as
$$
(\CR V)(x) = V \circ \s \circ H(x) + V \circ H(x).
$$
We consider the usual metric on $\S$: $d(x,y) = \frac1{2^n}$ if $n = \min\{ i \ge 1 : x_i \neq y_i \}$. 
This distance is sometimes graphically represented as follows: 
\begin{figure}[ht]
\unitlength=6mm
\begin{picture}(12,4.5)(-2,0)
\put(-1,1.5){\small$x_{0}=y_{0}$}
\put(0,2){\line(1,0){5}}
\put(5,2){\line(1,1){1}} \put(5,2){\line(1,-1){1}}
\put(5,1){\dashbox{0.2}(0.1,3)}\put(4.5,4.3){\small $n-1$} 
\put(4,0){\small $x_{n-1}=y_{n-1}$}
\put(6,3){\line(1,0){3}}
\put(9.5,3){\small $y$}
\put(6,1){\line(1,0){3}}
  \put(9.5,1){\small $x$} 
\end{picture}
\caption{The sequence $x$ and $y$ coincide for digits $0$ up to $n-1$ and then split.}\label{fig:distance}
\end{figure}

Note that $d(H^nx, H^ny) = d(x,y)^{2^n}$: if $x$ and $y$ coincide for $m$ digits, then $H^{n}(x)$ and $H^{n}(y)$ coincide for $2^nm$ digits. 
\\[2mm]
The first two results deal with the continuous fixed points for the renormalization operator $\CR$. The main issue is to determine fixed points and their \emph{weak stable leaf}, namely the potentials attracted by the considered fixed point by iterations of $\CR$. 

The second series of results deals with the thermodynamical formalism; we study if some class of potentials related to weak  
stable leaf of the fixed points, exhibit a phase transition. In particular, Theorem~\ref{theo-super-MP} is related to a question of Van Enter et al.\ (see \eg \cite{vEM,vEMZ}) asking whether it is possible to reach a quasi-crystal by freezing a system before 
zero temperature. 

The last result (Theorem~\ref{theo-thermo-Vu}) returns to the geometrical dynamics and shows the main difference between the Thue-More case and the Manneville-Pomeau case. Due to the Cantor structure of the attractor of the substitution, there exist non-continuous but locally constant  (on $\K$ up to  a finite number of points) fixed points for $\CR$. As the Hofbauer potential represents the logarithm  of the derivative of an affine approximation of the Manneville-Pomeau map, one of these potentials, $V_{u}$, represents the logarithm of the derivative of an affine approximation to the Feigenbaum-Coullet-Tresser map $f_{feig}:\C \to \C$. The main difference with the Manneville-Pomeau case is that here, $V_{u}$ has no phase transition whereas $-\log |f'_{feig}|$ has. 

%%%%%%%%%%%%
\subsubsection{ Results on continuous fixed points for $\CR$}
Define the one-parameter family of potentials
\begin{equation}\label{eq:Uc}
U_c = \left\{ \begin{array}{rl}
c & \text{ on } [01], \\
-c & \text{ on } [10], \\
0 & \text{ on } [00] \cup [11].
\end{array}\right.
\end{equation}
It is easy to verify that $U_{c}$ is a fixed point of $\CR$.
Given a fixed function $V:\S\to\R$, the \emph{variation on $k$-cylinders} $\Var_{k}(V)$ is defined as
$$
\Var_{k}(V):=\max\{|V(x)-V(y)|,\ x_{j}=y_{j}\,\mbox{for }j=0,\ldots,k-1\}.
$$
The condition $\sum_{k=1}^{\8}\Var_{k}(W)<\8$ holds if \eg $W$ is H\"older continuous. 

\begin{theorem}\label{theo-super-uc}
If $W$ is a continuous fixed point of $\CR$ on $\K$ such that 
$$\sum_{k=1}^{\8}\Var_{k}(W)<\8,$$
then $W = U_c$ for $c = W(\rho_0)$.
\end{theorem}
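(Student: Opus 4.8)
The plan is to exploit the super\-exponential contraction $d(H^{n}x,H^{n}y)=d(x,y)^{2^{n}}$ together with the summability of the variations to force $W$ to be locally constant on $\K$, and then to read off its (few) values from the equation $\CR W=W$ itself. First I would iterate the renormalization operator: from $H\circ\s=\s^{2}\circ H$ one gets $H\circ\s^{k}=\s^{2k}\circ H$, and a straightforward induction yields
\[
(\CR^{n}W)(x)=\sum_{k=0}^{2^{n}-1}W(\s^{k}H^{n}x)\qquad\text{for all }n\ge1.
\]
Since $H(\K)\subseteq\K$ (a standard fact for substitution subshifts: the words of $H(x)$ are words of $H(\rho_{0})=\rho_{0}$ whenever those of $x$ are) and $\s(\K)=\K$, all the points $\s^{k}H^{n}x$ lie in $\K$ when $x\in\K$, so $W(x)=\sum_{k=0}^{2^{n}-1}W(\s^{k}H^{n}x)$ holds for every $x\in\K$ and every $n$.

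Next I would show that $W$ depends only on the first two coordinates on $\K$. Let $x,y\in\K$ agree on their first $m$ symbols with $m\ge2$. Then $H^{n}x$ and $H^{n}y$ agree on their first $2^{n}m$ symbols, hence $\s^{k}H^{n}x$ and $\s^{k}H^{n}y$ agree on their first $2^{n}m-k$ symbols for $0\le k<2^{n}$. Subtracting the two fixed-point identities and bounding term by term,
\[
|W(x)-W(y)|\le\sum_{k=0}^{2^{n}-1}\Var_{2^{n}m-k}(W)=\sum_{\ell=2^{n}(m-1)+1}^{2^{n}m}\Var_{\ell}(W)\le\sum_{\ell>2^{n}(m-1)}\Var_{\ell}(W).
\]
Because $m\ge2$, the lower index $2^{n}(m-1)+1$ tends to $\8$ with $n$, so the right-hand side is a tail of the convergent series $\sum_{\ell}\Var_{\ell}(W)$ and tends to $0$; hence $W(x)=W(y)$. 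Thus $W$ is constant, say $W\equiv c_{b}$, on each nonempty cylinder $[b]\cap\K$ for $b\in\{00,01,10,11\}$; all four blocks occur in $\rho_{0}$, so all four constants are defined.

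Finally I would feed this back into $\CR W=W$. For $x\in\K$ the first two symbols of $Hx$ are $01$ if $x_{0}=0$ and $10$ if $x_{0}=1$, while those of $\s Hx$ are $10,11,00,01$ according as $(x_{0},x_{1})=(0,0),(0,1),(1,0),(1,1)$. Evaluating $W(x)=W(Hx)+W(\s Hx)$ at points $x\in\K$ with $(x_{0},x_{1})$ running through $\{00,01,10,11\}$ (each occurs in $\rho_{0}$, hence in $\K$) gives
\[
c_{00}=c_{01}+c_{10},\qquad c_{11}=0,\qquad c_{00}=0,\qquad c_{01}+c_{10}=0,
\]
whence $c_{00}=c_{11}=0$ and $c_{10}=-c_{01}$. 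Setting $c:=c_{01}$ we conclude $W=U_{c}$ on $\K$, and $c=c_{01}=W(\rho_{0})$ since $\rho_{0}$ begins with $01$.

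The only delicate step is the second one: the telescoping index range must be arranged so that $\{2^{n}(m-1)+1,\dots,2^{n}m\}$ genuinely escapes to infinity, which is exactly why one gets constancy on $2$-cylinders but not on $1$-cylinders, and this is sharp since $U_{c}$ distinguishes the two $1$-cylinders. Everything else is bookkeeping with the Thue-Morse block structure.
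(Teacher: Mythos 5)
Your proof is correct and follows the paper's strategy closely. Step~1 is identical: both you and the paper use the identity $\CR^{n}W = S_{2^{n}}W\circ H^{n}$, the fact that $H^{n}$ raises the number of matching coordinates from $m$ to $2^{n}m$, and the summability of $\Var_{k}(W)$ to conclude that $W$ is constant on each $2$-cylinder of $\K$. The only genuine difference is in the endgame. The paper obtains the linear relations among the four constants from Lemma~\ref{lem-Vpreimage-rhoi}, which is itself derived by iterating the fixed-point equation along $(\s\circ H)^{n}(x)$ and passing to the limit via Lemma~\ref{lem-sigmacirch}; that lemma is designed to apply under a weaker continuity hypothesis (as Remark~\ref{rem-continuity-weaker} notes) and is re-used elsewhere. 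You instead substitute points $x\in\K$ with each prescribed $2$-block $(x_{0},x_{1})\in\{00,01,10,11\}$ directly into $W(x)=W(Hx)+W(\s Hx)$, which is a little more elementary once $2$-cylinder constancy is already in hand, and avoids the limit argument altogether. The two routes produce the same system $c_{00}=c_{01}+c_{10}$, $c_{11}=0$, $c_{00}=0$, $c_{01}+c_{10}=0$, and hence the same conclusion $W=U_{c}$ with $c=c_{01}=W(\rho_{0})$. The two points you needed to check for your shortcut — that all four $2$-blocks occur in $\K$, and that $H(\K)\subset\K$ and $\s(\K)=\K$ so the right-hand sides of the fixed-point identity stay in $\K$ — you verified correctly (and the containment is in fact the $k=1$ case of Lemma~\ref{lem:disjoint}).
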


As for the Hofbauer case, we produce a non-negative continuous fixed point for $\CR$ with a well-defined \emph{weak stable set}\footnote{In \cite{baraviera-leplaideur-lopes} it was proven that $\CR^{n}(V)$ converges to the fixed point $\wt V$; here we only get convergence in the Cesaro sense.}

\begin{theorem}\label{theo-vtilde}
There exists a unique function $\wt V$, such that 
$\wt V = \lim_{m \to \infty} \frac1m\sum_{k=0}^{n-1}\CR^kV$ for every continuous $V$ satisfying $V(x) = \frac1n+o(\frac1n)$ if $d(x,\K) = 2^{-n}$. 
Moreover $\wt V$ is $\CR$-invariant, continuous  and 
positive except on $\K$: $\frac1{2n} \le \wt V(x) \le \frac{1}{n-1}$
if $d(x,\K) = 2^{-n}$.
\end{theorem}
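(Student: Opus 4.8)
The engine of the proof is the identity
\begin{equation}\label{eq:Rk-sum}
\CR^kV(x)=\sum_{m=0}^{2^k-1}V(\s^mH^kx),
\end{equation}
which I would establish by induction on $k$ from $H\s=\s^2H$ in the equivalent form $H^k\s=\s^{2^k}H^k$: once every $\s$ is pushed to the left of the $H$'s, the shift exponents that appear run exactly through $\{0,1,\dots,2^k-1\}$. On $\K$ the right-hand side of \eqref{eq:Rk-sum} is a Birkhoff sum for $(\K,\s)$; but the hypothesis $V(x)=\frac1n+o(\frac1n)$ together with continuity of $V$ forces $V\equiv0$ on $\K$ (approach a point of $\K$ by points of the dense open set $\S\sm\K$ whose level tends to $\infty$), so $\CR^kV\equiv0$ on $\K$, every limit is $0$ there, and all the analysis happens off $\K$.

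Writing $n(y):=-\log_2 d(y,\K)$, the combinatorial core is the ``level profile'' $m\mapsto n(\s^mH^kx)$ for $x\notin\K$ at level $n$. Since $H^k(0)$ and $H^k(1)$ are complementary words, and by recognizability of the Thue--Morse subshift (Moss\'e), $n(H^kx)=n2^k$ for $n$ past the recognizability threshold (the finitely many small levels being checked directly), and in general
$$
n(\s^mH^kx)=n2^k-m+e_m ,\qquad 0\le m<2^k,
$$
with $e_m\ge0$, because $m+n(\s^mH^kx)$ is nondecreasing in $m$ and equals $n2^k$ at $m=0$. The $e_m$ are the \emph{accidents}: $e_m\ge1$ exactly when the length-$(n2^k-m)$ suffix of $H^k(x_0\cdots x_{n-1})$ is right-special in the Thue--Morse subshift, and larger accidents are produced by the squares of Thue--Morse. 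What I need to extract from this analysis is (i) that the profile $m\mapsto e_m$ at depth $k$ is a renormalization of the corresponding profile of $x$ at depth $k-1$ (through $\s^{j2^k}H^k=H^k\s^j$), so that, up to a summable-in-$k$ error, $\CR^kV(x)$ becomes a function of the combinatorial datum of $x$ alone, evolving under a factor of the uniquely ergodic system $(\K,\s)$; and (ii) crude two-sided and, crucially, sparsity control: the linear factor complexity and recognizability of Thue--Morse bound how often and how large accidents can be (roughly, $\#\{m<2^k:e_m\ge 2^j\}$ decays geometrically in $j$).

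Granting (i)--(ii), the quantitative statements follow by feeding the level estimates into \eqref{eq:Rk-sum} and comparing with $\sum_{m=0}^{2^k-1}\frac1{n2^k-m}=\sum_{l=(n-1)2^k+1}^{n2^k}\frac1l<\ln\frac{n}{n-1}<\frac1{n-1}$. For the upper bound, $n(\s^mH^kx)\ge n2^k-m\ge(n-1)2^k+1$ forces, for $k$ large, $V(\s^mH^kx)\le(1+o(1))\frac1{n(\s^mH^kx)}$ uniformly in $m$, hence $\CR^kV(x)\le\frac1{n-1}$ for all large $k$; splitting the Cesàro average at such an index and letting $M\to\infty$ gives $\wt V(x)\le\frac1{n-1}$, and $\wt V\equiv0$ on $\K$ then also follows by continuity. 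The lower bound is the delicate one: a single large accident defeats the naive monotone iteration (the candidate subsolution $\frac1{2n}$ fails to be $\CR$-superinvariant as soon as some $e_1\ge2$), so here I would combine the sparsity of accidents from (ii) — most of the $2^k$ terms satisfy $e_m\le n2^k$, whence $\sum_{m<2^k}\frac1{n2^k-m+e_m}\gtrsim\frac1n$ — with the Cesàro averaging over $k$ to conclude $\wt V(x)\ge\frac1{2n}$; positivity off $\K$ is then immediate.

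Uniqueness is easy: two admissible $V,V'$ differ by a continuous $W$ with $W|_\K=0$ and $W(x)=o(\frac1n)$, so the estimate above gives $|\CR^kW(x)|\le\eps_k\sum_{m<2^k}\frac1{n2^k-m}\to0$ pointwise with $\eps_k\to0$, hence the Cesàro limits agree. Existence, with uniform convergence, is where (i) pays off: realizing $(\CR^kV(x))_k$, up to a summable-in-$k$ correction, as a Birkhoff-type sum along the orbit of the combinatorial datum of $x$ under a factor of the uniquely ergodic $(\K,\s)$, unique ergodicity yields convergence of $\frac1M\sum_{k<M}\CR^kV(x)$, uniformly in $x$ (the estimates being uniform at each level, and levels near $\K$ contributing uniformly little). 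Uniform convergence makes $\wt V$ continuous, as a uniform limit of the continuous functions $\CR^kV$, and lets one pass $\CR$ (bounded for $\|\cdot\|_\infty$, since $\|\CR W\|_\infty\le2\|W\|_\infty$) through the Cesàro limit, giving $\CR\wt V=\wt V$. The main obstacle throughout is exactly steps (i)--(ii): pinning down the recursive structure and the sparsity of the accidents — the very phenomenon the introduction singles out as responsible for the merely-Cesàro convergence here, and, later, for the absence of phase transitions.
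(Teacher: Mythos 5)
Your skeleton correctly reproduces the identity $\CR^k V = S_{2^k} V\circ H^k$, the observation that $V\equiv 0$ on $\K$, the definition of accidents via the level profile, and the elementary upper bound $\CR^k V(x)\le \frac1{n-1}$ from $n(\s^m H^k x)\ge n2^k-m$. But the proof has a genuine gap: you explicitly flag steps (i) and (ii) (the ``renormalization of the accident profile'' and the geometric sparsity bound) as the crucial ingredients, then write ``granting (i)--(ii)'' and never establish them. These are not peripheral: the lower bound, the existence of the Cesàro limit, and the continuity of $\wt V$ all hinge on them in your plan. A blueprint that postpones exactly the hard part is not yet a proof.

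Moreover, the route you choose is substantially different from --- and harder than --- the one the paper takes. For the lower bound, the paper does not need any sparsity statistics of accidents: it proves (Proposition~\ref{prop-time-accident}) that the \emph{first} accident along $H^n x, \s H^n x, \dots$ cannot happen before time $2^{n-1}$, whence $\CR^n V_0(x)\ge \sum_{j<2^{n-1}}\frac1{2^n m - j}\to\int_0^{1/2}\frac{dt}{m-t}\ge\frac1{2m}$, which is both sharper and simpler than your plan; your remark that ``the candidate subsolution $\frac1{2n}$ fails to be $\CR$-superinvariant'' is a worry that the paper's argument sidesteps entirely. For existence and uniqueness, the paper does not encode the accident profile as an orbit in a uniquely ergodic factor (which is what you would need to make (i) precise and which looks delicate, because accidents depend on the approximating $z\in\K$ and not just on $x$); instead it proves equicontinuity of $\CR^n V_0$ via a case analysis on accidents, extracts accumulation points of the Cesàro averages by Arzelà--Ascoli, and shows any two accumulation points $\wt V_1,\wt V_2$ coincide by a bootstrap: both lie in $[\frac1{2m},\frac1m+o(\frac1m)]$, so $\wt V_1-\wt V_2\le\frac12 V_0 + o(V_0)$; applying the Cesàro average of $\CR^k$ to this inequality and iterating gives $\wt V_1-\wt V_2 = o(V_0)$, which for a $\CR$-fixed function forces $\wt V_1=\wt V_2$. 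Your uniqueness paragraph only shows that \emph{if} the Cesàro limit exists then it is the same for all admissible $V$; it does not address why accumulation points of $\frac1M\sum_{k<M}\CR^k V_0$ are unique. Finally, the $\CR$-invariance of $\wt V$ in your last lines is stated but not obtained: continuity of $\CR$ in $\|\cdot\|_\infty$ lets you pass it through a uniformly convergent limit, but uniform convergence is exactly what you have not shown. In short: the structure is right and the key phenomena are correctly identified, but the argument needs either a full execution of (i)--(ii) or, better, to be replaced by the equicontinuity--plus--bootstrap strategy.
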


%%%%%%%%%%%%%%%%%%%%%%%%
\subsubsection{Results on Thermodynamic Formalism}
We refer to Bowen's book \cite{bowen} for the background on thermodynamic formalism, equilibrium states and Gibbs measures in $\S$. 
However, in contrast to Bowen's book, our potentials are not H\"older-continuous. 

For a given potential $W:\S\to\R$, the pressure of $W$ is defined by 
$$\CP(W):=\sup\{h_{\mu}(\s)+\int W\,d\mu\},$$ 
where $h_{\mu}(\s)$ is the Kolmogorov entropy of the invariant probability measure $\mu$.  The supremum is a maximum in $\S$ whenever $W$ is continuous. Any measure realizing this maximum is called an equilibrium state. 
We want to study the regularity of the function $\gamma\mapsto
\CP(-\gamma W)$. For simplicity, this function will also be
denoted by $\CP(\gamma)$. If $\CP(\gamma)$ fails to be analytic, we speak of a 
phase transition. We are in particular interested in the special
phase transition as $\gamma \to \8$: easy and classical computations
show that $\CP(\gamma)$ has an asymptote of the form $-a\gamma+b$ as
$\gamma \to \8$. By an \emph{ultimate phase transition} we mean that
$\CP(\gamma)$ reaches its asymptote at some $\gamma'$. In this case,
there cannot be another phase transition for larger $\gamma$, hence
{\em ultimate}.
Then, by a convexity argument, $\CP(\gamma)=-a\gamma+b$  for any $\gamma\ge \gamma'$. 
One of the main motivations for studying
 ultimate phase transitions is that the quantity $a$ satisfies
$$
a=\inf\left\{\int W\,d\mu,\ \mu \text{ is a shift-invariant probability measure} \right\}.
$$
An example of an ultimate phase transition for rational maps can be 
found in \cite{makarov-smirnov}. 
The Manneville-Pomeau map is another classical example. 

\iffalse
\begin{theorem}[No phase transition]
\label{theo-thermo-a=1}
Let $V:\S\rightarrow \R$ be a continuous function satisfying $V(x) = \frac1n+o(\frac1n)$ if $d(x,\K) = 2^{-n}$. Then, for every $\gamma\ge 0$, there exists a unique equilibrium state associated to $-\gamma V$ and it gives
positive mass to every open set. The pressure function $\gamma\mapsto\CP(\gamma)$ is analytic and positive on $[0, \infty)$, although it converges to zero as $\gamma \to \8$. 
\end{theorem}
\fi

\begin{theorem}[No phase transition]
\label{theo-thermo-a>1}
Let $a > 1$ and $V:\S\rightarrow \R$ be a continuous function satisfying $V(x) = \frac1{n^{a}}+o(\frac1{n^{a}})$ if $d(x,\K) = 2^{-n}$.
Then, for every $\gamma\ge 0$, there exists a unique equilibrium state associated to $-\gamma V$ and it gives
positive mass to every open set. The pressure function $\gamma\mapsto\CP(\gamma)$ is analytic and positive on $[0, \infty)$, although it converges to zero as $\gamma \to \8$. 
\end{theorem}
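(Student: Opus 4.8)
The plan is to transfer the question, via a first-return (inducing) scheme to a suitable set, to the thermodynamic formalism of a countable Markov shift, and then to invoke Sarig's and Mauldin--Urba\'nski's theory; the heart of the matter is a lower bound forcing the pressure to remain strictly positive, and this is the one point where the hypothesis $a>1$ is genuinely used. (Throughout, $V\ge 0$ and vanishes precisely on $\K$, to order $n^{-a}$.)

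\emph{Positivity of $\CP(\gamma)$.} Using the combinatorial structure of the Thue--Morse attractor (the ``accidents''), one builds, for each large $M$, a family $\CE_M$ of at least $cM$ pairwise distinct ``simple depth-$M$ excursions'': finite words $w$, with $|w|\asymp M$, such that for $x\in[w]$ the orbit starts in a fixed cylinder disjoint from $\K$, descends --- essentially monotonically in $n=-\log_2 d(\cdot,\K)$ --- to within $2^{-M}$ of $\K$, and returns, with first-return time exactly $|w|$. Since each level $n\le M$ is visited only a controlled number of times along such a word and $a>1$, the cost $\sum_{0\le j<|w|}V(\s^j x)\le O(1)+\sum_{n\ge 1}Cn^{-a}=O(1)$ is bounded \emph{independently of $M$}. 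The i.i.d.\ concatenation of letters of $\CE_M$ defines a $\s$-invariant measure $\mu_M$ for which, by Abramov's formula, $h_{\mu_M}(\s)\asymp(\log M)/M$ and $\int V\,d\mu_M\asymp 1/M$, so that $\CP(\gamma)\ge h_{\mu_M}(\s)-\gamma\int V\,d\mu_M\asymp(\log M-c'\gamma)/M>0$ for $M$ large. As $\gamma<\8$ is arbitrary, $\CP(\gamma)>0$ on $[0,\8)$. (For $a=1$ the excursion cost grows like $\log M$ and this estimate collapses --- precisely the mechanism behind the Manneville--Pomeau phase transition.)

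\emph{Inducing and the thermodynamic formalism.} Fix $\gamma$ and put $s^\ast=\CP(\gamma)>0$. Let $X_N=\{x:\ d(\s^m x,\K)\le 2^{-N}\ \forall m\}$; since its language agrees with that of Thue--Morse up to length $N$ and complexity is submultiplicative, $\delta_N:=\htop(X_N)\le\frac1N\log p_{\TM}(N)\to 0$ (Thue--Morse has linear complexity). Choose $N$ with $\delta_N<s^\ast$, set $\CV_N=\{d(\cdot,\K)\le 2^{-N}\}$ (a finite union of cylinders), $Y=\S\sm\CV_N$, and let $F=\s^R$ be the first-return map to $Y$. Then $(Y,F)$ is a topologically mixing countable Markov shift with the BIP property; the induced potential $V^Y=\sum_{0\le j<R}V\circ\s^j$ is bounded (again by $a>1$) and, by the regularity of $V$ near $\K$, of summable variations, so $\ph_{\gamma,s}:=-\gamma V^Y-sR$ has summable variations for every $s$. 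A $\s$-invariant $\mu$ with $\mu(Y)=0$ is carried by $X_N$, hence $h_\mu(\s)-\gamma\int V\,d\mu\le\delta_N<s^\ast$; thus every equilibrium state of $-\gamma V$ charges $Y$, and the standard inducing identity (Abramov together with the variational principle for Gurevich pressure) gives $\CP_F(\ph_{\gamma,s^\ast})=0$. Since $s^\ast>\delta_N$ this critical potential has finite Gurevich pressure, so with BIP it is positive recurrent, with a unique RPF measure and a unique equilibrium state $\nu$, a Gibbs measure positive on all cylinders; the Gibbs bound $\nu([w])\lesssim e^{-s^\ast|w|}$ with $\#\{w:|w|=\ell\}\le e^{(\delta_N+o(1))\ell}$ and $\delta_N<s^\ast$ yields $\int R\,d\nu<\8$. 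Projecting $\nu$ downstairs gives a $\s$-invariant $\mu_\gamma$ with $\mu_\gamma(Y)>0$ and $h_{\mu_\gamma}(\s)-\gamma\int V\,d\mu_\gamma=s^\ast=\CP(\gamma)$: an equilibrium state. Any other equilibrium state charges $Y$, induces an equilibrium state of $\ph_{\gamma,s^\ast}$, hence equals $\nu$ upstairs and $\mu_\gamma$ downstairs --- uniqueness; positivity of $\mu_\gamma$ on open sets follows from the Gibbs property on $Y$ and the fact that every cylinder, including those deep inside $\CV_N$, is traversed by some return word of positive $\nu$-weight. Analyticity: $(\gamma,s)\mapsto\ph_{\gamma,s}$ is affine, so near any $(\gamma_0,\CP(\gamma_0))$ Sarig's analyticity theorem for Gurevich pressure applies and $\partial_s\CP_F(\ph_{\gamma,s})=-\int R\,d\nu\in(-\8,0)$; the analytic implicit function theorem makes $\gamma\mapsto\CP(\gamma)$ real-analytic on $[0,\8)$. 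Finally $\CP(\gamma)\ge0$, while $h_\mu(\s)\ge\gamma\int V\,d\mu$ forces $\int V\,d\mu\le(\log 2)/\gamma$, hence $\mu(\CV_{m_\gamma})\ge 1-o(1)$ for some $m_\gamma\to\8$; estimating entropy through an $m_\gamma$-cylinder partition gives $h_\mu(\s)\le\frac1{m_\gamma}\log p_{\TM}(m_\gamma)+o(1)\to 0$ uniformly in $\mu$, so $\CP(\gamma)\to 0$ as $\gamma\to\8$.

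\emph{Main obstacle.} The crux is the first step: extracting from the structure of $\K$ a super-constant number ($\gtrsim M$) of genuinely distinct deep excursions whose $V$-cost is \emph{uniformly} bounded. That boundedness is exactly the convergence of $\sum n^{-a}$, with enough slack to absorb the ``accident''-controlled number of returns to each depth level; it is what yields $\CP(\gamma)>0$, what makes $V^Y$ bounded, and what places the critical induced potential in the positive-recurrent regime. Once this combinatorial input is secured, the rest is a fairly routine application of Sarig's countable-Markov-shift thermodynamic formalism.
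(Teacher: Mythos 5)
Your proof takes a genuinely different route from the paper's. The paper works inside Leplaideur's local thermodynamic formalism: it induces on a single cylinder $J$ disjoint from $\K$, decomposes first-return orbits into free paths and excursions (Subsection~\ref{subsec:counting}), and shows that the excursion sum $E_{z,\gamma}(\BBone_J)$ diverges already at $z=0$ by summing over single-accident excursions and using $\sum_{n>m}n^{-a}\le 1/(a-1)$; positivity and analyticity of $\CP$, uniqueness of the equilibrium state and its positivity on open sets then come from parts~1--3 of Proposition~\ref{prop-equil-presspos}. You instead induce on $Y=\S\sm\CV_N$, pass to a BIP countable full shift and invoke Sarig's theory, and you prove $\CP(\gamma)>0$ separately by an explicit measure construction (i.i.d.\ concatenation of $\asymp M$ accident-free depth-$M$ excursions) together with Abramov's formula. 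That lower bound is an illuminating alternative to the transfer-operator divergence and makes the role of $a>1$ completely transparent; your argument that $\CP(\gamma)\to0$ (concentration of equilibrium states near $\K$, plus the entropy estimate through $m$-cylinders and the linear complexity of Thue--Morse) is also more complete than the two-line computation in the paper.

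One intermediate claim in your inducing step is wrong and should be removed: the induced potential $V^Y$ is \emph{not} bounded. By Proposition~\ref{prop-time-accident} a piece ending at escape level $m\ge N$ descended from level $d\ge\tfrac53 m$ over $b\ge\tfrac23 m$ steps, and its $V$-cost $\sum_{n=m}^{d}n^{-a}\asymp m^{1-a}$ is therefore bounded below by $c\,N^{1-a}>0$; but a return orbit can contain $K\approx R/N$ such pieces, and nothing caps $K$, so $V^Y$ grows like $R\,N^{-a}$ along long excursions. Fortunately this does not sink the argument, because Sarig's machinery needs summable variations of $\ph_{\gamma,s}$ (together with BIP, $\sup\ph_{\gamma,s}<\8$ and finite Gurevich pressure), not boundedness, and summable variations hold for a different reason. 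Since $\s^{R(x)}(x)$ re-enters $Y$, the $\K$-prefix of $\s^j(x)$ has length $<(R(x)-j)+N$ for every $0\le j<R(x)$; hence as soon as $|w_2\cdots w_k|\ge N$ (which holds for every $k\ge N+1$, each return block having length $\ge1$) any two points of the $k$-cylinder $[w_1\cdots w_k]$ of the induced shift give identical levels along $j<R$, so $\Var_k(V^Y)=0$. This is precisely the content of the local Bowen condition~\eqref{eq:Bowen} verified in the paper. You should substitute this argument for the boundedness claim, and note that for a general continuous $V$ with an arbitrary $o(n^{-a})$ tail this summable-variation verification is the one place where more care is needed (the paper too handles this general case only informally, by absorbing the perturbation into the free paths).
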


\begin{theorem}[Phase transition]
\label{theo-thermo-a<1}
Let $a \in (0,1)$ and $V:\S\rightarrow \R$ be a continuous function satisfying $V(x) = \frac1{n^{a}}+o(\frac1{n^{a}})$ if $d(x,\K) = 2^{-n}$. Then there exists $\gamma_{1}$ such that for every $\gamma>\gamma_{1}$ the unique equilibrium state for $-\gamma V$ is the unique invariant measure $\mu_{\K}$ supported on $\K$. For $\gamma<\gamma_{1}$, there exists a unique equilibrium state associated to $-\gamma V$ and it gives positive mass to every open set in $\S$. 
The pressure function $\gamma\mapsto\CP(\gamma)$  is positive and analytic 
on $[0,\gamma_{1})$.
\end{theorem}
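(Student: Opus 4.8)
The plan is to combine a variational identification of the transition point $\gamma_{1}$ with an inducing (first-return) scheme, in the spirit of \cite{baraviera-leplaideur-lopes}. Since $V$ vanishes on $\K$ and $(\K,\sigma)$ is uniquely ergodic with $h_{\mu_{\K}}(\sigma)=0$, one has $\CP(-\gamma V)\ge h_{\mu_{\K}}(\sigma)-\gamma\int V\,d\mu_{\K}=0$ for every $\gamma\ge0$; since moreover $V\ge0$ with $\{V=0\}=\K$, this shows that $a=\inf\{\int V\,d\mu\}=0$ with $\mu_{\K}$ the unique minimiser, so the asymptote of $\gamma\mapsto\CP(\gamma)$ is the constant $0$ and the claim is that this asymptote is reached at a finite $\gamma_{1}$. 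As $\CP(-\gamma V)>0$ precisely when some invariant $\mu$ satisfies $h_{\mu}(\sigma)>\gamma\int V\,d\mu$, the transition point must equal
\[
\gamma_{1}=\sup\Big\{\,\tfrac{h_{\mu}(\sigma)}{\int V\,d\mu}\ :\ \mu\ \sigma\text{-invariant},\ \textstyle\int V\,d\mu>0\,\Big\},
\]
and the whole proof reduces to establishing $0<\gamma_{1}<\infty$ together with the fine behaviour of $\CP$ on either side of $\gamma_{1}$.

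For the fine behaviour I would fix a large level $N$, put $Y=Y_{N}:=\{x\in\S:d(x,\K)\ge2^{-N}\}$, a clopen set disjoint from $\K$, and pass to the first-return map $\sigma_{Y}$, with return time $R$ and induced potential $V_{Y}:=\sum_{k=0}^{R-1}V\circ\sigma^{k}$. Because $\S$ is a full shift, $(Y,\sigma_{Y})$ is a topologically mixing countable Markov shift (in fact with the big-images-and-preimages property), so Sarig's thermodynamic formalism applies: there is a well-defined Gurevich pressure $\CP_{Y}$, and for potentials of summable variation one has the recurrence dichotomy, Ruelle-Perron-Frobenius (Gibbs) measures in the positive-recurrent case, and analyticity. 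By the Abramov-Kac relation, for $P\ge0$ the inequality $\CP(-\gamma V)\le P$ is equivalent to $\CP_{Y}(-\gamma V_{Y}-PR)\le0$; for $N$ large this is legitimate, since the only invariant measures not charging $Y$ are supported on $\Lambda_{N}:=\bigcap_{k}\sigma^{-k}\{d(\cdot,\K)<2^{-N}\}$, and expansiveness of $\sigma$ gives $\Lambda_{N}\downarrow\K$ with $\htop(\Lambda_{N})\downarrow0$. Together with $\CP(-\gamma V)\ge0$ this yields $\CP(-\gamma V)=0\iff\CP_{Y}(-\gamma V_{Y})\le0$ and $\CP(-\gamma V)>0\iff\CP_{Y}(-\gamma V_{Y})>0$; in particular $\gamma_{1}$ is also the sign-change point of $\gamma\mapsto\CP_{Y}(-\gamma V_{Y})$, independently of $N$.

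The crux is thus to control $\CP_{Y}(-\gamma V_{Y})$, i.e., to estimate $\sum_{r}\exp(-\gamma\inf_{[r]}V_{Y})$ over the first-return words $r$, and this is where the Thue-Morse substitution enters, through the ``accidents'' of Section~\ref{subsec-theo-vtilde}: along an excursion the depth $k\mapsto-\log_{2}d(\sigma^{k}x,\K)$ decreases by exactly $1$ at ordinary steps and can only jump at the combinatorially rigid accident steps. This controls simultaneously the number of admissible excursion shapes of a given length (sub-exponentially many) and the size of $V_{Y}$ on them; for $a\in(0,1)$ one extracts $\inf_{[r]}V_{Y}\gtrsim D^{1-a}$ when $r$ descends to depth $D$, and $\inf_{[r]}V_{Y}\gtrsim|r|\,N^{-a}$ when $r$ remains shallow. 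Both grow, so $\sum_{r}\exp(-\gamma\inf_{[r]}V_{Y})$ is $+\infty$ for small $\gamma$ but finite and strictly decreasing to $0$ for large $\gamma$; hence $\CP_{Y}(-\gamma V_{Y})$ is positive for small $\gamma$ and negative for large $\gamma$, and $0<\gamma_{1}<\infty$. (This is exactly where $a<1$ matters: for $a>1$ the deep excursions have $\inf V_{Y}$ uniformly bounded, the series diverges for every $\gamma$, and there is no phase transition, which is Theorem~\ref{theo-thermo-a>1}.)

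It remains to assemble the conclusions. For $\gamma>\gamma_{1}$ one has $\CP_{Y}(-\gamma V_{Y})<0$, so $\CP(-\gamma V)=0$; and if an equilibrium state $\mu$ had an ergodic component $\mu'\neq\mu_{\K}$, then $\mu'$ would charge some $Y_{N'}$ and, writing $\hat\mu'$ for its induced measure, $h_{\mu'}(\sigma)-\gamma\int V\,d\mu'\le(\int R\,d\hat\mu')^{-1}\CP_{Y_{N'}}(-\gamma V_{Y_{N'}})<0$ (using that $\gamma_{1}$ does not depend on $N$), contradicting equilibrium; hence $\mu=\mu_{\K}$ uniquely. For $\gamma<\gamma_{1}$ one has $\CP_{Y}(-\gamma V_{Y})>0$, so there is a unique $P=P(\gamma)>0$ with $\CP_{Y}(-\gamma V_{Y}-PR)=0$; the factor $e^{-PR}$ forces exponential return-time tails, hence positive recurrence and $\int R\,d\nu<\infty$ for the Gibbs measure $\nu$ on $Y$, which spreads out (Kac) to a $\sigma$-invariant equilibrium state $\mu$ for $-\gamma V$ with $\CP(-\gamma V)=P(\gamma)>0$; uniqueness of $\mu$ and the fact that it charges every open set of $\S$ are inherited from $\nu$. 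Finally $\gamma\mapsto P(\gamma)$ is real-analytic on $[0,\gamma_{1})$ by analyticity of the Gurevich pressure in the positive-recurrent regime and the implicit function theorem, since $\partial_{P}\CP_{Y}(-\gamma V_{Y}-PR)=-\int R\,d\nu\neq0$; it is positive there because $P(\gamma)>0$, and $\CP(0)=\htop(\S,\sigma)=\log2$. The main obstacle is the estimate of the third paragraph: turning the ``accidents'' description into a genuine bound on the number of excursion shapes and into the quantitative lower bounds on $\inf V_{Y}$, and in particular verifying that for $a\in(0,1)$ the excursion series really tends to $0$ as $\gamma\to\infty$; it may well be necessary to induce on a set adapted to the substitution structure rather than on the crude metric neighbourhood $Y_{N}$.
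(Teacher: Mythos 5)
Your overall strategy is the right one and is closely parallel to the paper's: induce on a region disjoint from $\K$, relate the global pressure to a one--parameter family of induced pressures via Abramov/Kac, identify $\gamma_{1}$ with the point where the induced pressure crosses zero, and obtain uniqueness, positivity of the measure on open sets, and analyticity from standard countable--state Ruelle--Perron--Frobenius theory together with the implicit function theorem. The paper packages this in the Leplaideur ``local thermodynamic formalism'' language, working with $\CL_{z,\gamma}$ on a single cylinder $J$ disjoint from $\K$ and the critical abscissa $z_{c}(\gamma)$ of the associated power series, whereas you appeal to Sarig's Gurevich-pressure machinery on a clopen set $Y_{N}$; these frameworks are interchangeable here (your $P$ is the paper's $z$, your $\CP_{Y}(-\gamma V_{Y}-PR)\le 0$ is the paper's $z\ge z_{c}(\gamma)$), and your variational characterization $\gamma_{1}=\sup\{h_{\mu}/\int V\,d\mu\}$ is equivalent to the paper's ``the point where $\CP(\gamma)$ hits $0$''.

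However, the proof as written has a genuine gap at precisely the step you flag at the end as ``the main obstacle'': you assert, but do not prove, that the excursion series $\sum_{r}\exp(-\gamma\inf_{[r]}V_{Y})$ is finite for large $\gamma$ and tends to $0$ as $\gamma\to\infty$. This is the entire analytic content of the theorem, and it is not a soft fact: one must reconcile (i) the lower bound $\inf_{[r]}V_{Y}\gtrsim D^{1-a}$ on the Birkhoff sum for an excursion descending to depth $D$ with (ii) a count of how many distinct excursion shapes reach depth $D$, in the presence of the non-monotone depth profile caused by accidents. The paper handles this by parametrizing excursions by their accident sequence $(b_{j},d_{j})$, using Proposition~\ref{prop-time-accident} to show that the depth after an accident at step $b$ satisfies $d-b\in\{2^{k},3\cdot2^{k}\}$ with $b\gtrsim 2^{k}$, so that the Birkhoff sum still accumulates across accidents. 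It then reduces the excursion series to two explicit double sums $B(0)$ and $C(0)$ over $(k,j)$ and proves the quantitative bound
$$
B(0)\ \le\ \sum_{k\ge4}\Bigl(1+\tfrac{3}{\gamma\,2^{k(1-a)}-1}\Bigr)\bigl(\tfrac23\bigr)^{\gamma\,2^{k(1-a)}}\ \longrightarrow\ 0\qquad(\gamma\to\infty),
$$
which is exactly where the hypothesis $a<1$ enters (the exponent $2^{k(1-a)}$ diverges with $k$). Your sketch names the accidents but does not translate them into a bound of this kind, nor does it address the counting of excursion shapes, so as it stands the finiteness and decay of the series are unjustified. A secondary, smaller issue: your assertion ``$\sum_{r}\exp(-\gamma\inf_{[r]}V_{Y})=+\infty$ for small $\gamma$'' is neither proved nor actually needed --- $\gamma_{1}>0$ follows simply from $\CP(0)=\log 2>0$ and continuity, which you also note at the very end; I would drop the unproved claim. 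Once the decay of the excursion series is established, the rest of your assembly (positive recurrence via $e^{-PR}$, Kac, uniqueness, analyticity via $\partial_{P}\CP_{Y}=-\int R\,d\nu\ne0$, and convexity forcing $\CP\equiv0$ on $[\gamma_{1},\infty)$) is sound and matches the paper.
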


These results show that case $a=1$ (\ie the Hofbauer potential)
is the border between the regimes with and without phase transition.
Whether there is a phase transition for the case $a=1$ (\ie the fixed point $\tilde V$) or in other words the analog of the Hofbauer potential, discussed in \cite{baraviera-leplaideur-lopes}, is much more subtle. 
We intend to come back to this question in a later paper.

The full shift $(\S,\s)$ can be interpreted geometrically 
by a degree $2$ covering of the circle. The Manneville-Pomeau map can be 
viewed this way; it is expanding except for a single (one-sided) 
indifferent fixed point.
When dealing with the Thue-Morse shift, it is natural to look
for a circle covering with an indifferent Cantor set.

\begin{theorem}
\label{theo-super-MP}
There exist $\CC^1$ maps $f_{a}:[0,1]\circlearrowleft$, 
 semi-conjugate to the full $2$-shift and  expanding everywhere except on a Cantor set $\wt\K$, such that $\wt\K$ is conjugate to $\K$ in $\S$ and 
 if $a \in (0,1)$, then $-\gamma\log f'_{a}$ has an ultimate phase transition. 
\end{theorem}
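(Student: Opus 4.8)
The plan is to manufacture $f_a$ by an itinerary construction so that the geometric potential $\log|f_a'|$, read through the coding, is \emph{exactly} a potential covered by Theorem~\ref{theo-thermo-a<1}; the ultimate phase transition is then squeezed out of that theorem together with the facts that $\s|_{\K}$ has zero entropy and that the potential integrates to zero against the unique invariant measure $\mu_{\K}$ on $\K$.

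\textbf{Step 1: constructing $f_a$.} A degree-two $\CC^1$ circle map $f\colon[0,1]\circlearrowleft$ with two increasing branches each onto $[0,1]$ is specified by a nested family of compact intervals $I_w$ (for finite words $w$ over $\{0,1\}$) with $I_{\es}=[0,1]$, $I_w=I_{w0}\cup I_{w1}$ adjacent in this order and $\sup_{|w|=n}|I_w|\to0$, plus a choice of $|f'|$ making $f'$ continuous and each branch onto; then $f(I_{iw})=I_w$, the itinerary map $\pi\colon\S\to[0,1]$ is a semi-conjugacy to the full $2$-shift, and $\wt\K:=\pi(\K)$ is the nested intersection of the $I_w$ over words $w$ in the Thue--Morse language $L(\K)$. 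I would choose $|I_w|\asymp2^{-|w|}$ (so $\dist(y,\wt\K)\asymp2^{-n(y)}$, where $n(y)$ is the length of the longest prefix of an itinerary of $y$ that lies in $L(\K)$), and I would prescribe $|f_a'|$ to be $1$ on $\wt\K$ and, on each gap component $I_{wj}$ with $w\in L(\K)$, $wj\notin L(\K)$ (on which every point has the single depth $n=|w|$, the whole subtree below being disjoint from $\wt\K$), essentially equal to $e^{1/n^a}$ --- with $\CC^1$ interpolations near the gap endpoints, which either join neighbouring gaps whose depths differ by $O(1)$ (overlap-freeness of the Thue--Morse word bounds how many consecutive "left children'' can occur), or accumulate on $\wt\K$ with depths $\to\8$; in either case the interpolation perturbs $\log|f_a'|$ only by $O(n^{-a-1})=o(n^{-a})$. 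Finally $|I_0|,|I_1|$ and the gap sizes must be normalised so that both branches are genuinely onto $[0,1]$; this is a soft fixed-point argument. Since $\K$ is an infinite minimal subshift it contains no eventually constant sequence, so $\pi$ is injective on $\K$; hence $\pi|_{\K}$ conjugates $\s|_{\K}$ to $f_a|_{\wt\K}$, $f_a$ is $\CC^1$ and semi-conjugate to the full $2$-shift, and it is expanding everywhere off $\wt\K\cong\K$.

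\textbf{Step 2: the transported potential and the conclusion.} Put $V:=\log|f_a'|\circ\pi$. It is continuous, non-negative, vanishes exactly on $\K$, and by construction $V(x)=\tfrac1{n^a}+o(\tfrac1{n^a})$ whenever $d(x,\K)=2^{-n}$. Since $\pi$ is a boundedly finite-to-one topological factor map, $\CP(-\gamma\log|f_a'|)=\CP(-\gamma V)$ for all $\gamma$ and equilibrium states correspond under $\pi_*$. For $a\in(0,1)$, Theorem~\ref{theo-thermo-a<1} applied to $V$ gives $\gamma_1$ with $\CP(\gamma)$ positive and analytic on $[0,\gamma_1)$ and, for $\gamma>\gamma_1$, unique equilibrium state $\mu_{\K}$; as $h_{\mu_{\K}}(\s)=0$ and $\int V\,d\mu_{\K}=0$ (because $V$ vanishes on $\K$), we get $\CP(\gamma)=0$ for all $\gamma\ge\gamma_1$. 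On the other hand $V\ge0$ makes the slope $\inf\{\int V\,d\mu\}$ of the asymptote of $\CP$ equal $0$ (attained at $\mu_{\K}$), so the asymptote is horizontal, necessarily the line $\CP\equiv0$; thus $\CP$ reaches its asymptote at $\gamma_1$. This is an ultimate phase transition, and a genuine one, since a function that is positive and analytic on $[0,\gamma_1)$ admits no analytic continuation by the constant $0$. Pulling back through $\pi$ yields the assertion for $-\gamma\log f_a'$.

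\textbf{Main obstacle.} The hard part is Step~1: one must keep $f_a$ of class $\CC^1$ with non-expanding set \emph{exactly} $\wt\K$; arrange that the interval-side depth $n(\pi(x))$ matches the symbolic $n$ in $d(x,\K)=2^{-n}$, which hinges on the (sub-affine, overlap-free) combinatorics of right-special words of the Thue--Morse subshift to determine which cylinders branch and which turn into gaps; and carry out the gap interpolations and the branch normalisation so that $V$ has the \emph{sharp} asymptotics $\tfrac1{n^a}+o(\tfrac1{n^a})$, not merely $\asymp n^{-a}$, which is what Theorem~\ref{theo-thermo-a<1} requires. Note that $\log|f_a'|$ is inevitably non-H\"older near $\wt\K$, so there is no bounded distortion; but because the potential is prescribed directly rather than read off from the cylinder lengths, distortion plays no role in checking the hypotheses of Theorem~\ref{theo-thermo-a<1}.
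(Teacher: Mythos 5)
Your plan is conceptually sound and takes a genuinely different route from the paper. You build $f_a$ directly by specifying a cylinder structure and a derivative, then read off the lifted potential and invoke Theorem~\ref{theo-thermo-a<1}. The paper works in the opposite direction: it first constructs on $\S$ a potential $W$ with the right asymptotics that is in addition \emph{continuous at dyadic points} (Lemma~\ref{lem-pot-W1na}), takes the conformal measure $\nu_a$ for $\gamma_1 W$ at the phase transition, and defines $f_a := \theta_a \circ \Pi \circ \s \circ \Pi^{-1} \circ \theta_a^{-1}$, where $\theta_a$ is the distribution function of $\Pi_*\nu_a$. The conformality identity $\nu_a(\s B)=\int_B e^{\gamma_1 W}\,d\nu_a$ then yields $f_a' = e^{\gamma_1 W\circ\Pi^{-1}}$ immediately, so $\CC^1$-ness, the two-branch normalization, and the exact asymptotics all fall out at once, with no separate fixed-point argument. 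Your Step~2 (entropy-preserving factor, asymptote reached at $\gamma_1$) agrees with the paper and is correct.

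There is, however, a concrete error in your Step~1. You cannot simultaneously impose $|I_w|\asymp 2^{-|w|}$ and $|f_a'|\approx e^{1/n^a}$ on gaps at depth~$n$: with that derivative prescription, the cumulative expansion over a Thue--Morse cylinder of depth $n$ is $\prod_{k\le n} e^{1/k^a}\approx e^{n^{1-a}/(1-a)}$, which is sub-exponential for $a\in(0,1)$, so these cylinders have lengths $\approx e^{-n^{1-a}/(1-a)}\gg 2^{-n}$ and the relation $\dist(y,\wt\K)\asymp 2^{-n(y)}$ fails. Fortunately this claim is not actually used: your lifted potential $V=\log|f_a'|\circ\pi$ depends only on the \emph{symbolic} depth $n$ of $x$ (which controls $d(x,\K)=2^{-n}$ in $\S$), not on the interval-side geometry, so the rest of the argument survives once the incorrect length claim is dropped. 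What then remains genuinely open is the normalization and consistency of the construction, which you defer to a ``soft fixed-point argument''; the paper's conformal measure is precisely the device that resolves this, and some equivalent argument is needed before Step~1 is complete.
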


Another geometric realization of the Thue-Morse shift 
and the prototype of renormalizability in one-dimensional dynamics
is the Feigenbaum map.
This quadratic interval map $f_\qfeig$ has zero entropy, but when complexified
it has entropy $\log 2$. 
Moreover, it is conjugate to another
analytic degree $2$ covering map on $\C$,
which we call $f_\cfeig$, that is fixed by the Feigenbaum renormalization 
operator 
$$
\CR_\feig f = \Psi^{-1} \circ f^2 \circ \Psi
$$ 
where $\Psi$ is linear $f$-dependent holomorphic contraction.
Arguments from complex dynamics
give that $\CP(-\gamma \log |f'_\cfeig|) = 0$ for all $\gamma \ge 2$,
 see Proposition~\ref{prop:feig_PT}.
Because $h_{top}(f_\cfeig) = \log 2$ on its Julia set,
the potential $-\gamma_1 \log |f'_\cfeig|$ has a phase transition 
for some $\gamma_1 \in (0,2]$.
When lifted to symbolic space, $-\log|f'_\cfeig|$
produces an unbounded potential $V_\cfeig$ which is
fixed by $\CR$.
We can find a potential $V_u$, which is constant on 
$$
(\s\circ H)^{k}(\S)\setminus (\s\circ H)^{k+1}(\S)
$$
for each $k$
such that $\| V_\cfeig - V_u\|_\infty < \infty$ and 
analyze the thermodynamic properties of $V_u$.
Although $\CP(-\gamma_1 V_\cfeig) = 0$ for some $\gamma_1 \le 2$,  
it is surprising to see that the potential $V_u$ exhibits no phase transition.
We emphasize here an important difference with the Manneville-Pomeau case,
where both the potential $-\gamma \log |f'_{MP}|$ and its countably piecewise
version, the Hofbauer potential, which is constant on cylinder sets 
$(H_{MP})^{k}(\S)\setminus(H_{MP})^{k+1}(\S) = [0^{2k+1}1]$,
undergo a phase transition.

\begin{theorem}[No phase transition for unbounded fixed point $V_u$]
\label{theo-thermo-Vu}
The unbounded potential $V_u$ given by
$$
V_u(x) = \alpha(k-1) \quad
\text{ for } \quad 
x \in (\s \circ H)^k(\S) \setminus (\s \circ H)^{k+1}(\S)
$$
is a fixed point of $\CR$ for any $\alpha \in \R$. If $\alpha < 0$, then
for every $\gamma\ge 0$, there exists a unique equilibrium state for $-\gamma V_{u}$. It gives positive mass to any open set in $\S$.
The pressure function $\gamma \mapsto \CP(-\gamma V_u)$
is analytic  and positive for all $\gamma \in [0, \infty)$.
\end{theorem}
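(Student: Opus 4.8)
The plan is to exploit that, although the potential $V_u$ is pointwise unbounded (for $\alpha<0$ it decreases to $-\infty$ along the annuli $A_j:=(\sigma\circ H)^j(\Sigma)\setminus(\sigma\circ H)^{j+1}(\Sigma)$ as one approaches $\Lambda:=\bigcap_j(\sigma\circ H)^j(\Sigma)$), it is \emph{uniformly integrable}. Writing $k(x):=\max\{j\ge0:x\in(\sigma\circ H)^j(\Sigma)\}$, so that $V_u=\alpha(k-1)$ and $k=\sum_{j\ge1}\1_{(\sigma\circ H)^j(\Sigma)}$, I would first establish
\[
\mu\bigl((\sigma\circ H)^j(\Sigma)\bigr)\le 2^{-j}\qquad\text{for every $\sigma$-invariant probability measure }\mu.
\]
Using $(\sigma\circ H)^j=\sigma^{2^j-1}\circ H^j$ and the bilateral recognizability of the Thue--Morse substitution (Moss\'e's theorem, valid at every aperiodic point, plus a direct check on the finitely many periodic ``accident'' orbits), the $2^j$ phases of the $H^j$-block structure are pairwise disjoint off a $\mu$-null set, which gives the bound. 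Hence $\int k\,d\mu=\sum_{j\ge1}\mu((\sigma\circ H)^j(\Sigma))\le1$ and, since $\alpha<0$,
\[
\int V_u\,d\mu=\alpha\Bigl(\int k\,d\mu-1\Bigr)\in[0,|\alpha|],\qquad\text{so}\qquad\int(-\gamma V_u)\,d\mu\in[-\gamma|\alpha|,0],
\]
uniformly in $\mu$ and $\gamma\ge0$. Consequently $\CP(-\gamma V_u)\le\htop(\sigma)=\log2$ for all $\gamma$, and $\inf_\mu\int V_u\,d\mu=0$ (attained at $\mu_\K$, which has $\int k\,d\mu_\K=\sum_j2^{-j}=1$), so the asymptote of $\gamma\mapsto\CP(-\gamma V_u)$ is horizontal. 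Finally $V_u$ is lower semicontinuous (each $\{V_u\ge t\}$ is the complement of some closed $(\sigma\circ H)^j(\Sigma)$), so together with the uniform integrability and the upper semicontinuity of $\mu\mapsto h_\mu(\sigma)$, the map $\mu\mapsto h_\mu(\sigma)+\int(-\gamma V_u)\,d\mu$ is upper semicontinuous on the weak-$\ast$ compact simplex of invariant measures and equilibrium states exist.

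To rule out a phase transition, set $g(\varepsilon):=\sup\{h_\mu(\sigma):\int V_u\,d\mu=\varepsilon\}$ for $0\le\varepsilon\le|\alpha|$; as entropy and $\int V_u\,d\mu$ are affine in $\mu$, $g$ is concave and $\CP(-\gamma V_u)=\sup_\varepsilon(g(\varepsilon)-\gamma\varepsilon)$, i.e.\ $\gamma\mapsto\CP(-\gamma V_u)$ is the concave conjugate of $g$. An ultimate phase transition occurs precisely when $g'(0^+)<\infty$ (then the supremum is attained at $\varepsilon=0$ for large $\gamma$, so $\CP(-\gamma V_u)=g(0)=0$ there). The decisive estimate, which replaces the geometric-series computations behind Theorems~\ref{theo-thermo-a>1}--\ref{theo-thermo-a<1} and is where the \emph{linear word-complexity} of Thue--Morse enters, is that with $\delta:=\varepsilon/|\alpha|=1-\int k\,d\mu$,
\[
g(\varepsilon)\ \asymp\ \varepsilon\log(1/\varepsilon)\qquad(\varepsilon\to0^+).
\]
Indeed $\delta=\sum_{j\ge1}(2^{-j}-\mu((\sigma\circ H)^j(\Sigma)))$ measures the ``deficiency'' of $\mu$ relative to the zero-entropy all-scales Thue--Morse profile of $\mu_\K$; a deficiency $\beta$ at level $j$ lets the orbit be unconstrained at scale $2^j$ with frequency $2^j\beta$, but since there are only $O(2^j)$ Thue--Morse words of length $2^j$ this buys only $O(j\beta)$ of entropy rather than $O(2^j\beta)$. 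Thus entropy is maximised by concentrating the whole deficiency $\delta$ at the largest feasible level $j\approx\log_2(1/\delta)$, giving the upper bound $h_\mu\lesssim\delta\log(1/\delta)$, while splicing free length-$2^{J}$ loops ($J\approx\log_2(1/\delta)$) into a $\mu_\K$-skeleton along the inducing scheme gives the matching lower bound. In particular $g'(0^+)=+\infty$ (and $g'(|\alpha|^-)=-\infty$ via Bernoulli-type measures), so the supremum defining $\CP(-\gamma V_u)$ is attained at some $\varepsilon(\gamma)>0$ for every $\gamma$; this already yields $\CP(-\gamma V_u)>0$ for all $\gamma\ge0$ and $\CP(-\gamma V_u)\downarrow0$, hence no ultimate phase transition.

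Uniqueness, full support and real-analyticity I would obtain by inducing, as in the earlier theorems: let $F=\sigma^\tau$ be the first-return map of $\sigma$ to a cylinder $Y$ disjoint from $\Lambda$ (e.g.\ $Y=[000]$), a topologically mixing countable Markov shift, with induced potential $\widehat{V_u}=\sum_{i<\tau}V_u\circ\sigma^i$. Applying the level bound to orbit segments gives $\widehat{V_u}(w)=\alpha(\sum_{j\ge1}T_j(w)-\tau(w))$ over a return word $w$, with $T_j(w)\le2^{-j}\tau(w)+O(1)$ the time spent at level $\ge j$; hence $\widehat{V_u}$ has tempered variation and the Gurevich pressure $p\mapsto\CP_F(-\gamma\widehat{V_u}-p\tau)$ is finite, strictly decreasing, strictly convex and real-analytic in $(\gamma,p)$ near $\{p=\CP(-\gamma V_u)\}$. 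By Sarig's theory the equation $\CP_F(-\gamma\widehat{V_u}-p\tau)=0$ has a unique root $p_0(\gamma)=\CP(-\gamma V_u)>0$ at which the induced system is positive recurrent, so its equilibrium state is unique and Gibbs and pushes down to the unique $\sigma$-equilibrium state for $-\gamma V_u$, which charges every cylinder of $\Sigma$; and $\gamma\mapsto\CP(-\gamma V_u)$ is real-analytic on $[0,\infty)$ by the implicit function theorem (using $\partial_p\CP_F<0$), so there is no phase transition at finite $\gamma$ either.

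The main obstacle is the two-sided estimate $g(\varepsilon)\asymp\varepsilon\log(1/\varepsilon)$ — especially the lower bound, i.e.\ constructing, for each small $\delta$, an invariant measure with $1-\int k\,d\mu\le\delta$ yet $h_\mu\gtrsim\delta\log(1/\delta)$, which requires simultaneous control of the level profile and the entropy of a measure built by grafting ``long free excursions at a single large scale'' onto the Thue--Morse odometer; and the matching upper bound, which needs the linear complexity of Thue--Morse to cap the entropy generated at each scale. This is exactly the estimate separating the present zero-entropy, linear-complexity attractor from a positive-entropy one (where $g'(0^+)$ would be finite and a phase transition would occur), and it is why $V_u$, unlike its continuous geometric counterpart $-\log|f'_\cfeig|$, has no phase transition. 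The remaining technicalities — the discontinuities of $V_u$ (concentrated on $\bigcup_j\partial(\sigma\circ H)^j(\Sigma)$, accumulating on $\Lambda$) and the verification that the ``accidents'' are $\mu$-null for every invariant $\mu$ — are again governed by Moss\'e's recognizability theorem.
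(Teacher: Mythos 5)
Your overall strategy is parallel to the paper's: establish $\int V_u\,d\mu\ge\int V_u\,d\mu_\K=0$ for every invariant $\mu$, then show $\CP(-\gamma V_u)>0$ for every $\gamma$ so that Proposition~\ref{prop-equil-presspos} (or an equivalent inducing/countable-Markov-shift framework like Sarig's, which is what you use) yields uniqueness, full support and real-analyticity. Your step 1 coincides with Lemma~\ref{lem-vu-inte-pos} (and the bound $\mu((\s\circ H)^j(\S))\le 2^{-j}$ is Remark~\ref{rem-measu-discrecylin}), and your inducing step plays the same role as Lemma~\ref{lem-vu-loc-Bowen} plus Proposition~\ref{prop-equil-presspos}.

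The gap is in the crucial middle step. To get $\CP>0$ the paper does not pass through the Legendre dual $g(\varepsilon)=\sup\{h_\mu:\int V_u\,d\mu=\varepsilon\}$; it computes the induced transfer operator directly, using the explicit identity that for a point shadowing $\K$ for $b=2^{k+1}-2^{k-i}$ steps the Birkhoff sum is $(S_bV_u)(x)=-\alpha(1+i)$. This quantity is \emph{bounded in $k$} (equal to $|\alpha|$ for $i=0$), not of order $\log b$; summing $e^{\gamma\alpha(1+i)}$ over $k\ge1$ at $z=0$ therefore diverges, which gives $z_c(\gamma)>0$ and hence $\CP(\gamma)>0$ for every $\gamma$. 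Your heuristic replaces this with the claim $g(\varepsilon)\asymp\varepsilon\log(1/\varepsilon)$, justified by the linear word-complexity of Thue--Morse: ``$O(2^j)$ Thue--Morse words of length $2^j$ buy $O(j)$ bits of entropy per level-$j$ excursion''. This accounting is not correct: an excursion that \emph{shadows} a Thue--Morse word does not provide $O(j)$ free bits at cost $\int V_u=O(j^{-1}\cdot 2^{-j})$ or similar; the entropy gain is essentially from the choice of excursion \emph{length} (which is $\log\log$, not $\log$), while the Thue--Morse content of the shadow controls the Birkhoff cost through the identity above. The asymptotic consistent with the paper's lower bound $z_c(\gamma)\ge 2^{-e^{-\gamma\alpha+2}+1}$ (double-exponential decay of the pressure) is $g(\varepsilon)\asymp\varepsilon\log\log(1/\varepsilon)$, not $\varepsilon\log(1/\varepsilon)$. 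Your conclusion $g'(0^+)=\infty$ happens to be true, but neither the stated rate nor the weaker statement is actually proved in your proposal, and the combinatorial intuition offered would not yield a proof as it stands.

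Two secondary points. First, the existence argument: you invoke lower semicontinuity of $V_u$ plus ``uniform integrability'' to conclude that $\mu\mapsto h_\mu+\int(-\gamma V_u)\,d\mu$ is upper semicontinuous. But $-\gamma V_u$ is USC and \emph{unbounded above} near the four points $\s^{-1}\{\rho_0,\rho_1\}$, while $\int k\,d\mu=\sum_j\mu((\s\circ H)^j(\S))$ is only \emph{lower} semicontinuous in $\mu$ (increasing limit of continuous functions); so the sum is USC$+$LSC, and upper semicontinuity is not automatic. The paper handles this carefully via the decomposition $\nu=\beta\mu_\K+(1-\beta)\mu$ and Lemma~\ref{lem-liminf}. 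Second, the appeal to Moss\'e's recognizability is unnecessary: the required disjointness $\K=\bigsqcup_{j<2^k}\s^j\circ H^k(\K)$ is an elementary direct argument in the paper (Lemma~\ref{lem:disjoint}).
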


The exact definition of the equilibrium state for this unbounded potential 
can be found in Subsection~\ref{subsec-unbounded2}. 

%%%%%%%%%%%%%%%%%%%%%%%%%%%%%
\subsection{Outline of the paper}
In Section~\ref{sec:renorm} we prove Theorems~\ref{theo-super-uc} and \ref{theo-vtilde}. In the first subsection we recall some  and prove other results on the Thue-Morse substitution and its associated attractor $\K$.

In Section~\ref{sec:thermo} we study the thermodynamic formalism and prove 
Theorems~\ref{theo-thermo-a<1}, \ref{theo-super-MP} and \ref{theo-thermo-Vu}. 
This section uses extensively the theory of local thermodynamic 
formalism defined in \cite{leplaideur1} 
and developed in further works of the author. 
Finally, in the Appendix, we explain the relation between the
Thue-Morse shift and the Feigenbaum map, and state and prove Proposition~\ref{prop:feig_PT}.

%%%%%%%%%%%%%%%%%%
%%%%%%%%%%%%%%%%%%%%
\section{Renormalization in the Thue-Morse shift-space}\label{sec:renorm}

%%%%%%%%%%%%%%%%%%%%%%%%%%%%%%%%%%
\subsection{General results on the Thue-Morse shift-space}\label{subsec-genresult-TM}

Let $\s:\S \to \S$ be the full shift on $\S = \{ 0, 1 \}^\N$.
If $x = x_0x_1x_2x_4\dots \in \S$, let $[x_0\dots x_{n-1}]$ denote the $n$-cylinder containing $x$, and let $\bar x_i = 1-x_i$ be our notation for the opposite symbol.

Recall that $\rho_0$ and $\rho_1$ are the fixed points of the Thue--More substitution, and that 
$\K = \overline{\orb_\s(\rho_0)} = \overline{\orb_\s(\rho_1)}$ is a uniquely ergodic and zero-entropy subshift. We denote by $\mu_{\K}$ its invariant measure.

We give here some properties for the Thue-Morse sequence that can be found in \cite{BLRS, B, dekking, dLV}. 
\begin{enumerate}
\item
 {\em Left-special} words
(\ie words $w$ such that both $0w$ and $1w$ appear in $\K$)
are prefixes of $H^k(010)$ or of $H^k(101)$
for some $k \ge 0$. 
\item
 {\em Right-special} words
(\ie words $w$ such that both $w0$ and $w0$ appear in $\K$)
are suffixes of $H^k(010)$ or of $H^k(101)$
for some $k \ge 0$. 
\item 
{\em Bispecial} words (\ie words $w$ such that both left and right-special)
are precisely the words $\tau_k := H^k(0)$, $\bar \tau_k := H^k(1)$, 
$\tau_k\bar \tau_k \tau_k = H^k(010)$ and $\bar \tau_k \tau_k \bar \tau_k = H^k(101)$ for $k \ge 0$. 
There are four ways in which a word $w$ can be extended to $awb$, \ie with a symbol both to the right and left.
It is worth noting that for $w = \tau_k$ or $\bar \tau_k$, all four ways indeed occur in $\K$, while for $w = \tau_k \bar \tau_k \tau_k$ or 
$\bar \tau_k \tau_k \bar \tau_k$ only two extensions occur.
\item 
The Thue-Morse sequence has low {\em word-complexity}:
$$
p(n) = \begin{cases}
3 \cdot 2^m + 4r & \text{ if } 0 \le r < 2^{m-1},\\
4 \cdot 2^m + 2r & \text{ if } 2^{m-1} \le  r < 2^m,\\
\end{cases}
$$
where $n = 2m + r + 1$. 
\item The Thue-Morse shift
is almost square-free in the sense that if $w = w_1\dots w_n$
is some word, then $ww$ can appear in $\K$, but not $www_1$.
The nature of the Thue-Morse substitution is such that
$\rho_0$ and $\rho_1$ are concatenations of the words $\tau_k$ and $\bar \tau_k$.
Appearances of $\tau_k$ and $\bar \tau_k$ can overlap, but not
for too long compared to their lengths, as made clear in 
Corollary~\ref{cor-limited-overlap}
\end{enumerate}

The next lemma shows that almost-invertibility of $\s$ on $\K$ implies some shadowing close to $\K$. 

\begin{lemma}\label{lem-invertible}
For $x \in \S$ with $d(x,\K) < 2^{-5}$, 
let  $y, y' \in \K$ be the closest points in $\K$
to $x$ and $\sigma(x)$ respectively.
If $y' \neq \s(y)$, then $y'$ starts as
$\tau_k$, $\bar \tau_k$, $\tau_k\bar \tau_k \tau_k$ or
$\bar \tau_k \tau_k \bar \tau_k$ for some $k \ge 3$.
\end{lemma}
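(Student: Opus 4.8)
The plan is to track how the ``closest point in $\K$'' can change under one application of $\sigma$, using the structure of left-special words in $\K$ recalled in Subsection~\ref{subsec-genresult-TM}. Write $y \in \K$ for the closest point to $x$, so that $x$ and $y$ agree on a long prefix (length $>5$ since $d(x,\K)<2^{-5}$) and then split; say they agree on exactly the first $m$ digits, $m \ge 6$. Then $\sigma(x)$ and $\sigma(y)$ agree on the first $m-1$ digits. If $\sigma(y)$ were still the closest point of $\K$ to $\sigma(x)$, there would be nothing to prove, so assume $y' \neq \sigma(y)$. Since $y' \in \K$ is closer to $\sigma(x)$ than $\sigma(y)$ is, and $\sigma(x)$ agrees with $\sigma(y)$ on at least $m-1$ digits, $y'$ must agree with $\sigma(x)$ (hence with $\sigma(y)$) on at least those first $m-1$ digits as well; that is, $\sigma(y)$ and $y'$ are two distinct points of $\K$ sharing a prefix of length $\ge m-1$.

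The key observation is then: the word $w := (\sigma y)_0 \cdots (\sigma y)_{m-2} = y_1 \cdots y_{m-1}$ is a \emph{left-special} word of $\K$. Indeed it extends to the left inside $\K$ by $y_0$ (giving the point $y$), and it is a prefix of both $\sigma(y)$ and $y'$; so I need to argue these force two different left extensions. The cleanest way is: $w$ occurs in $\K$ preceded by $y_0$; and because $\sigma(y)$ and $y'$ split only after position $m-1$, while $x$ forces the closest-point switch, one shows $w$ also occurs preceded by $\bar y_0$. (Here one uses that if $w$ had a unique left extension in $\K$, the map $x \mapsto$ closest point would commute with $\sigma$ on the relevant cylinder, contradicting $y' \neq \sigma(y)$; this is essentially the content of ``almost-invertibility'' of $\sigma$ on $\K$.) By item~(1) of the list of Thue-Morse properties, a left-special word is a prefix of $H^k(010)$ or $H^k(101)$ for some $k \ge 0$. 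Since $\sigma(y)$ begins with $w$, and $y'$ begins with $w$, both $\sigma(y)$ and $y'$ begin with a prefix of $H^k(010)$ or $H^k(101)$, i.e. with $\tau_k$, $\bar\tau_k$, $\tau_k\bar\tau_k\tau_k$ or $\bar\tau_k\tau_k\bar\tau_k$ (a prefix of one of these four is itself a prefix of a shorter one of the same type, so up to replacing $k$ by a smaller value we may take $y'$ to \emph{start as} exactly one of these four words).

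It remains to get the lower bound $k \ge 3$. This is a length count: $w$ has length $m-1 \ge 5$, and the bispecial/left-special words of length $< 2^3 = 8$ are, by item~(3), only $\tau_0 = 0$, $\bar\tau_0 = 1$, $\tau_1 = 01$, $\bar\tau_1 = 10$, $\tau_2 = 0110$, $\bar\tau_2 = 1001$, together with $\tau_0\bar\tau_0\tau_0 = 010$, $\bar\tau_0\tau_0\bar\tau_0 = 101$, $\tau_1\bar\tau_1\tau_1 = 011010$, $\bar\tau_1\tau_1\bar\tau_1 = 100101$ and the length-$12$ words $H^2(010), H^2(101)$; checking these, the genuinely left-special ones of length $\ge 5$ that are prefixes of some $H^k(010)$ or $H^k(101)$ force $k \ge 3$ once the prefix has length $> 4$. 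Being a touch careful with the constant $2^{-5}$ (which is exactly what makes $m - 1 \ge 5$ and thus pushes us past the small cases) finishes the argument.

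\textbf{Main obstacle.} The delicate point is the middle step: rigorously extracting that $w = y_1\cdots y_{m-1}$ is left-special in $\K$ from the mere inequality $d(\sigma x, y') < d(\sigma x, \sigma y)$. One must rule out the possibility that $\sigma(y)$ simply leaves $\K$-proximity of $\sigma(x)$ for a trivial reason while $y'$ is some far-away point; the content is that the switch of closest point can only happen at a splitting of two $\K$-orbits sharing a long prefix, and such a splitting is exactly a left-special word. Handling the boundary bookkeeping (exact prefix lengths, and whether $y'$ \emph{starts as} $\tau_k$ versus merely \emph{shares a prefix with} one of the four bispecial words) is the part that needs the stated $2^{-5}$ and the explicit short-word list, and is where I would spend the most care.
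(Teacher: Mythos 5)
Your proposal lands on the right mechanism (the closest-point switch near $\K$ forces a special word at the split) but aims at the wrong target: you go after \emph{left}-specialness and property~(1), whereas the paper's proof goes directly for \emph{bispecialness} and property~(3), and this difference opens two gaps. The paper's argument is: since $y' \neq \s(y)$, take $z \in \K$ with $\s(z) = y'$ and $z_0 \neq y_0$, let $d$ be maximal with $y_1\cdots y_{d-1} = z_1\cdots z_{d-1}$; then $y_1\cdots y_{d-1}$ has two distinct right extensions ($y_d \neq z_d$) and two distinct left extensions ($y_0 \neq z_0$), hence is bispecial, and property~(3) identifies it as \emph{exactly} one of $\tau_k$, $\bar\tau_k$, $\tau_k\bar\tau_k\tau_k$, $\bar\tau_k\tau_k\bar\tau_k$, which $y'$ begins with. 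You instead show (or rather assert) that the shorter word $w=y_1\cdots y_{m-1}$ --- in general a proper prefix of the paper's bispecial word --- is merely left-special, and property~(1) only gives that $w$ is a \emph{prefix} of some $H^k(010)$ or $H^k(101)$. That is genuinely weaker, and your parenthetical repair (``a prefix of one of these four is itself a prefix of a shorter one of the same type, so replacing $k$ by a smaller value $y'$ starts as exactly one of the four'') is false as stated: the prefix of $\tau_{j+2}=\tau_j\bar\tau_j\bar\tau_j\tau_j$ of length $2^{j+1}+1$ starts $\tau_j\bar\tau_j(\bar\tau_j)_0$, which is not a prefix of $\tau_j\bar\tau_j\tau_j$, so ``smaller $k$'' does not work, and passing from a prefix to a bispecial form requires knowing the actual splitting length. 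Your length bookkeeping for $k\ge 3$ inherits the same problem: with $|w|=m-1\ge 5$ and only left-specialness you cannot push past $k=2$.

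The second gap is the one you yourself flagged as the ``main obstacle'': you never actually exhibit the second left extension of $w$. The assertion that ``if $w$ had a unique left extension the closest-point map would commute with $\sigma$'' is the correct heuristic but is left hanging. The paper closes it concretely by producing the preimage $z$ of $y'$ with $z_0\neq y_0$; once you have that $z$ in hand, you simultaneously get left- and right-specialness of $y_1\cdots y_{d-1}$ at the \emph{correct} length $d-1$ (where $y$ and $z$ truly split, not the a priori shorter $m-1$), and property~(3) finishes in one step. I would therefore replace your commutation argument by an explicit construction of $z$, and replace the appeal to property~(1) by property~(3) applied to the full word up to the split.
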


\begin{proof}
As $y' \neq \s(y)$, there is another $z \in \K$ such that $\s(z) = y'$
and $z_0 \neq y_0 = x_0$.
Let $d$ be maximal such that $y_1\dots y_{d-1} = z_1 \dots z_{d-1}$,
so $y_d \neq z_d$.
This means that the word $y_1\dots y_{d-1}$ is bi-special, and according to property (3) has to coincide with $\tau_k$, $\bar \tau_k$, $\tau_k\bar \tau_k \tau_k$ or
$\bar \tau_k \tau_k \bar \tau_k$ for some $k \ge 3$.
\end{proof}

Due to the Cantor structure of $\K$, the distance of an orbit to $\K$ 
is not a monotone function in the time. This is the main problem we will have to deal with. 

\begin{definition}
\label{def-accident}
Let $x \in \S$ be such that $d(\s(x),\K)<2d(x,\K)$. Then we say that we have an  \emph{accident} at $\s(x)$. 
By extension, if $d(\s^{k+1}(x),\K)=2d(\s^{k}(x),\K)$ for every $k<n-1$, 
but $d(\s^{n}(x),\K)<2d(\s^{n-1}(x),\K)$, then we say that we have an \emph{accident} at time $n$. 
\end{definition}

\begin{proposition}\label{prop-time-accident}
Assume that $-\log_{2}d(x,\K)=d$ and that $b\le d$ is the first accident for the piece of orbit $x,\ldots, \s^{d}(x)$, then 
\begin{itemize}
\item $x_{b}x_{b+1}\ldots x_{d-1}$ is a bispecial word for $\K$; \vspace{1mm}
\item $d-b = 3^{\eps} \cdot 2^k$ for some $k$ and $\eps\in\{0,1\}$;
\item $x_0\dots x_{d-1}$ is neither right-special nor left-special; \vspace{2mm}
\item $b \ge \ \begin{cases}
2^k & \text{ if } d-b = 2^k, \\
2^{k+1}  & \text{ if } d-b = 3 \cdot 2^k.
\end{cases}
$
\end{itemize}
\end{proposition}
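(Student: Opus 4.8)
The plan is to translate everything into combinatorics on admissible words of $\K$ and then treat the four bullets in turn. Writing $d=-\log_2 d(x,\K)$, the hypothesis says $x_0\cdots x_{d-1}$ is $\K$-admissible while $x_0\cdots x_d$ is not; since subwords of admissible words are admissible, $x_j\cdots x_{d-1}$ is admissible for every $0\le j\le d$. Unwinding Definition~\ref{def-accident}, ``no accident before time $b$, accident at $b$'' becomes the chain of statements: $x_j\cdots x_{d-1}x_d$ is inadmissible for $0\le j\le b-1$, while $x_b\cdots x_{d-1}x_d$ is admissible. Since all one- and two-letter words are admissible in $\K$, one gets $1\le b\le d-1$, so $w:=x_b\cdots x_{d-1}$ is a nonempty word and $x_{b-1}$ exists.

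For the first bullet I would argue as follows. We have that $wx_d$ and $x_{b-1}w$ are admissible but $x_{b-1}wx_d$ is not. Since $\K$ is minimal, a non-right-special admissible word has a unique admissible one-letter right extension; if $w$ were not right-special this extension would have to be $x_d$, and then, $x_{b-1}w$ being admissible, its right extensions would also reduce to $\{x_d\}$, forcing $x_{b-1}wx_d$ to be admissible --- a contradiction. Hence $w$ is right-special, and by the mirror argument left-special, i.e.\ bispecial; property (3) then pins $w$ down to $\tau_k$, $\bar\tau_k$, $\tau_k\bar\tau_k\tau_k$ or $\bar\tau_k\tau_k\bar\tau_k$. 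The second bullet is then immediate, since these four words have lengths $2^k,2^k,3\cdot2^k,3\cdot2^k$, so $d-b=|w|=3^\eps 2^k$. The ``not right-special'' half of the third bullet is equally immediate: if $x_0\cdots x_{d-1}$ were right-special, both $x_0\cdots x_{d-1}0$ and $x_0\cdots x_{d-1}1$ would be admissible, contradicting inadmissibility of $x_0\cdots x_{d-1}x_d$.

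For the ``not left-special'' half, the plan is: assume $x_0\cdots x_{d-1}$ is left-special; by property (1) it is then a prefix of $H^i(010)$ or $H^i(101)$, hence a prefix of a fixed point of $H$. This forces the unique (by non-right-speciality) admissible right extension of $x_0\cdots x_{d-1}$ to be the next letter of that fixed point. On the other hand, using that $x_0\cdots x_{d-1}$ ends in the bispecial word $w$, that $w$ sits at an $H$-block boundary of the fixed point, and the explicit ``diagonal'' extension rule of $w$ from property (3) --- which, combined with inadmissibility of $x_{b-1}wx_d$, propagates a right-extension constraint leftward through $x_0\cdots x_{d-1}$ --- I expect to identify that same right extension with $x_d$, contradicting inadmissibility of $x_0\cdots x_{d-1}x_d$. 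For the last bullet I would induct on $k$ by desubstitution: when $k=0$, $d-b\in\{1,3\}$ and the bounds $b\ge 1$ resp.\ $b\ge 2$ follow by inspecting admissible words of length $\le 5$; when $k\ge1$, the admissible word $x_0\cdots x_{d-1}$ ends in the $H$-image word $w=H^k(v')$ with $v'\in\{0,1,010,101\}$, so by the limited-overlap statement (Corollary~\ref{cor-limited-overlap}, i.e.\ almost square-freeness of $\K$) this occurrence is block-aligned (up to the bounded exceptions near special words), whence $2^k\mid b$, and removing one application of $H$ --- transporting distances to $\K$ via $d(H\xi,H\eta)=d(\xi,\eta)^2$ and using that $d-b$ even keeps $b$ even --- yields an orbit segment realizing a first accident with parameters $(b/2,\,3^\eps 2^{k-1})$; the inductive hypothesis $b/2\ge 2^{k-1}$ resp.\ $2^k$ then gives $b\ge 2^k$ resp.\ $2^{k+1}$.

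The routine parts are the first two bullets and the right-special half of the third. The real difficulty is in (i) the left-special half of the third bullet and (ii) the last bullet: both hinge on controlling precisely how occurrences of long bispecial and $H^k$-image words can be embedded in admissible words, which is exactly the role of properties (1)--(3) and Corollary~\ref{cor-limited-overlap}. I expect the main obstacle to be the desubstitution bookkeeping in the last bullet --- matching the off-by-one shifts between the orbits of $x$, $\s x$ and their desubstitutions, and handling the finitely many special-word exceptions to block-alignment --- closely followed by making the left-special argument fully rigorous.
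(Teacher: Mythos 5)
Your handling of the first two bullets and the right-special part of the third matches the paper in substance; the only real difference is that you establish right/left-speciality of $w=x_b\cdots x_{d-1}$ by contradiction via uniqueness of extensions, where the paper exhibits the two witnesses $y$ and $y'$ directly (giving two distinct right-extensions) and invokes Lemma~\ref{lem-invertible} for the left side. Note also that the paper's own proof of the left-special half of the third bullet is in fact incomplete --- the sentence ``If $x_0\dots x_{d-1}$ is left-special, then'' simply trails off --- so your vague outline there is not competing against a complete argument in the text.

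The genuine gap is the last bullet. Your plan invokes Corollary~\ref{cor-limited-overlap} to deduce block-alignment, but that corollary is \emph{proved from} this very proposition: its proof cites ``part (3) of Proposition~\ref{prop-time-accident}'', namely the intermediate fact, established in the course of proving the fourth bullet, that $\tau_{k-1}$ and $\bar\tau_{k-1}$ cannot occur in $\rho_0$ before digit $2^{k-1}$. So the route is circular. It is also overstrong and false as stated: $\tau_2=0110$ occurs in $\rho_0$ at position $6$, so occurrences of $\tau_k$ are not $2^k$-aligned in general (and if one did have $2^k\mid b$ with $b>0$, then $b\ge 2^k$ would follow with no induction at all). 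The paper instead uses an independent, elementary parity computation: with $\pi(n)$ the surplus of $1$'s among the first $n$ letters of $\rho_0$, one has $\pi(n)\in\{0,\pm1\}$ and $\pi(n)=0$ exactly for even $n$ (because $H$ is a balanced length-$2$ substitution), and the explicit first eight letters $\tau_3=01101001$ of any $\tau_k$ ($k\ge3$) show that an odd starting position $m$ would force $|\pi(m+3)|$ or $|\pi(m+7)|$ to equal $2$. That gives precisely what one desubstitution step needs --- $m$ even --- and the base case $k=3$ is checked by inspecting $\rho_0$. If you wish to replace the parity argument by overlap-freeness (property (5) of Section~\ref{subsec-genresult-TM}, which \emph{is} stated independently), you can rule out an early second occurrence of the \emph{same} word $\tau_k$ via a period shorter than $2^k$ and a forbidden $www_1$; but an early occurrence of the complement $\bar\tau_k$ produces an anti-period, not a period, and you would need a separate case there, whereas the paper's $\pi$-argument treats both cases uniformly.
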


\begin{proof}
Let $y$  and $y' \in \K$ be such that $x$ and $y$ coincide for $d$ digits 
and $\s^{b}(x)$ and $y'$ coincide for at least $d-b$ digits. Then
$$
x_{b}x_{b+1}\ldots x_{d-1} =
y_{b}y_{b+1}\ldots y_{d-1} = y'_{0}y'_{1}\ldots y'_{d-b-1}
$$
is a right-special because it can be continued both as $y'$ and $y$. 
The word $x_{b}x_{b+1}\ldots x_{d-1}$ is also a left-special, because otherwise, by Lemma~\ref{lem-invertible}, only one preimage of $y'$ by $\s$ would be in $\K$ and this would coincide with the word $y_{b-1}y_{b}\ldots y_{d-b-1}$. Then $b-1$ rather than $b$ would be the first accident.  By property (3) above, $d-b = 3^{\eps}2^{k}$.  
On the other hand, $x_0\dots x_{d-1}$ cannot be right-special, because otherwise there would be a point
$\tilde x = x_0\dots x_{d-1}\bar x_d\dots \in \K$ with $d(x, \tilde x) < 2^{-d}$.
If $x_0\dots x_{d-1}$ is left-special, then 

To finish the proof of the proposition we need to check that the next accident cannot happen too early. 
Assume that $x_0\dots x_{d-2}$ start as $\rho_0 = r_0r_1r_3\dots$ (the argument for $\rho_1$ is the same).
Let $\pi(n) = \#\{ 0 \le i < n : r_i = 1\} -  \#\{ 0 \le i < n : r_i = 0\}$
count the surplus of $1$'s within the first $n$ entries of $\rho_1$.
Clearly $\pi(n) = 0$ for even $n$ and $\pi(n) = \pm 1$ otherwise.
Assume the word $\tau_k$ starts in $\rho_0$ at some digit $m < 2^k$. 
If $\pi(m) = 1$, then $\pi(m+3) = 2$ while if $\pi(m) = -1$, then
$\pi(m+7) = -2$. A similar argument works if $\bar \tau_k$ stars at
digit $m$. This shows that if $\tau_k$ or $\bar \tau_k$ can only start
in $\rho_0$ at even digits.
This means that we can take the inverse $H^{-1}$ and find
that $\tau_{k-1}$ (or $\bar \tau_k$) start at digit $m/2 < 2^{k-2}$ in $\rho_0$.
Repeating this argument, we arrive at $\tau_3$ or $\bar \tau_3$ starting
before digit $8 = 2^{4-1}$ of $\rho_0$, which is definitely false,
as we can see by inspecting $\rho_0 = 0110\ 1001\ 1001\ 0110\ \dots$.
Note also that the bound $2^k$ is sharp,
because $\bar \tau_k$ starts in $\rho_0$ at entry $2^k$.

Finally, we need to answer the same question for bispecial words $\tau_k \bar \tau_k \tau_k = \tau_{k+1} \tau_k$ and $\bar \tau_k \tau_k \bar \tau_k = \bar \tau_{k+1} \bar \tau_k$.
The previous argument shows that neither can start before digit $2^{k+1}$,
and also this bound is sharp, because $\bar \tau_k \tau_k \bar \tau_k$
starts in $\rho_0$ at entry $2^{k+1}$.
\end{proof}

\begin{corollary}\label{cor-limited-overlap}
Occurrences of $\tau_k$ and $\bar \tau_k$ cannot
overlap for more than $2^{k-1}$ digits.
\end{corollary}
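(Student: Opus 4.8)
The plan is to derive Corollary~\ref{cor-limited-overlap} directly from the sharp lower bounds on the time of the first accident established in Proposition~\ref{prop-time-accident}, together with the self-similar ($H$-invariant) structure of $\rho_0$ and $\rho_1$. Suppose for contradiction that two occurrences of $\tau_k$ (the cases $\tau_k,\bar\tau_k$ and $\bar\tau_k,\tau_k$ being handled identically, using that $\bar\tau_k = H^k(1)$ and that $\rho_0,\rho_1$ differ only by the bar operation) overlap in $\K$ for strictly more than $2^{k-1}$ digits. Concretely this means there is a word occurring in $\K$ of the form $\tau_k$ with a second copy of $\tau_k$ starting at some digit $m$ with $0 < m < 2^{k-1}$, i.e.\ $\tau_k$ starts inside $\rho_0$ (or a shift of it lying in $\K$) at a digit $m$ satisfying $0<m<2^{k-1}<2^k$.

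The key step is the parity/surplus argument already run in the proof of Proposition~\ref{prop-time-accident}: using the counting function $\pi(n) = \#\{i<n : r_i=1\} - \#\{i<n : r_i=0\}$, one checks that $\tau_k$ (equivalently $\bar\tau_k$) can only start at an \emph{even} digit of $\rho_0$, because starting at an odd digit forces $|\pi|$ to exceed $1$ within a few more entries, contradicting $\pi(n)\in\{0,\pm1\}$. Hence $m$ is even, and applying $H^{-1}$ we get that $\tau_{k-1}$ starts in $\rho_0$ at digit $m/2 < 2^{k-2}$. Iterating, $\tau_3$ or $\bar\tau_3$ would start in $\rho_0$ strictly before digit $2^2 = 4$ --- but inspection of $\rho_0 = 0110\,1001\,\dots$ shows the first non-trivial occurrence of $\tau_3$ or $\bar\tau_3$ is at digit $8$, and certainly there is no occurrence at digits $1,2,3$. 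This contradiction shows the overlap cannot exceed $2^{k-1}$. Alternatively, and perhaps more cleanly, one can phrase the whole thing as: an overlap of length $>2^{k-1}$ between two copies of $\tau_k$ would, by reading the overlap region, force an accident of type $d-b = 2^k$ occurring at a time $b < 2^{k-1} \le 2^k$, directly violating the last bullet of Proposition~\ref{prop-time-accident}.

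The mild obstacle is bookkeeping: making precise how an ``overlap of two copies of $\tau_k$'' translates into the hypothesis of Proposition~\ref{prop-time-accident} (choosing $x$ to be a point of $\K$ whose initial segment is the union of the two overlapping copies, identifying $d$ with the total length and $b$ with the start of the second copy, and checking that this $b$ is genuinely the \emph{first} accident), and handling the mixed cases $\tau_k$ overlapping $\bar\tau_k$ uniformly. None of this is deep --- it is the same parity argument repackaged --- so I would keep the proof to two or three lines, simply citing the sharp bound $b \ge 2^k$ for accidents with $d-b = 2^k$ from Proposition~\ref{prop-time-accident} and noting that an overlap of length $\ell$ between two copies of $\tau_k$ produces exactly such a configuration with $b = \ell$, whence $\ell \le 2^{k-1}$ would be the claim --- wait, one must be careful that the bound gives $b \ge 2^k$, so in fact the overlap $\ell = 2^k - (\text{something})$; I would double-check the exact arithmetic relating $\ell$, $b$ and $2^k$ when writing it out, but the mechanism is clear and the constant $2^{k-1}$ is exactly the sharp one coming from $\bar\tau_k$ starting at entry $2^k$ as noted at the end of the proposition.
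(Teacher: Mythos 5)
Your first argument (the parity/surplus count $\pi$ combined with $H^{-1}$ descent) is correct and is exactly the mechanism the paper uses: it invokes not the \emph{statement} of Proposition~\ref{prop-time-accident} but the fact established inside its proof, namely that $\tau_j$ and $\bar\tau_j$ (for $j\ge 2$) can only begin at even positions of $\rho_0$, and hence, by iterating $H^{-1}$, only at positions divisible by $2^{j-1}$. Applied to the two overlapping occurrences at positions $p$ and $p+m$ this forces $2^{k-1}\mid m$, so $m\ge 2^{k-1}$ and the overlap $2^k-m\le 2^{k-1}$. Two small tightenings you may want: (a) track the \emph{gap} $m$ rather than assume the first copy is the prefix of $\rho_0$, so that the ``other cases are symmetric'' bookkeeping becomes automatic (the parity argument works at every position of $\rho_0$ since $\pi(n)\in\{0,\pm1\}$ for all $n$); (b) descend only to level $2$ rather than level $3$: the quantity $m/2^{k-2}$ is a strictly positive even integer and is $<4$ whenever there is overlap, so it must equal $2$ --- this removes the need for inspection of the first eight digits of $\rho_0$.

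Your ``alternatively, and perhaps more cleanly'' reformulation via accidents is the part that does not go through, and your own hesitation about the arithmetic is warranted. Proposition~\ref{prop-time-accident} is a statement about a point $x\notin\K$, its distance $2^{-d}$ to $\K$, and the time $b$ of the \emph{first} accident along $x,\s(x),\dots$; the bound $b\ge 2^k$ (when $d-b=2^k$) says that a bispecial word of length $2^k$ cannot occur too early in $\rho_0$. Two overlapping occurrences of $\tau_k$ and $\bar\tau_k$, however, live entirely inside $\K$: there is no natural point $x$ with finite $d(x,\K)$, no distinguished ``first accident,'' and the positions $p$, $p+m$, $p+2^k$ of the overlap configuration do not line up with $b$ and $d$ in the proposition. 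If you try to force the identification $b=m$, $d-b=2^k$ you obtain $b\ge 2^k$, which would preclude \emph{any} overlap, contradicting the sharpness of the corollary (an overlap of exactly $2^{k-1}$ does occur, e.g.\ $\tau_k$ at position $3\cdot 2^{k-1}$ and $\bar\tau_k$ at position $2^{k+1}$ in $\rho_0$). So the sharp constant $2^{k-1}$ does not ``come from'' $\bar\tau_k$ starting at entry $2^k$: that occurrence is adjacent to, not overlapping with, the prefix. Stick with the direct parity/$H^{-1}$ argument.
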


\begin{proof}
We consider the prefix $\tau_k$ of $\rho_0$ only, as the other case is 
symmetric.
If the overlap was more than $2^{k-1}$ digits, then $\tau_{k-1}$ or 
$\bar \tau_{k-1}$ would appear in $\rho_0$ before digit $2^{k-1}$, which contradicts part (3) of Proposition~\ref{prop-time-accident}
\end{proof}

\begin{lemma}\label{lem:disjoint}
For each $k \ge 1$, the Thue-Morse substitution $H$ satisfies
$\K = \bigsqcup_{j=0}^{2^k-1} \s^j \circ H^k(\K)$,
where $\sqcup$ indicates disjoint union, so
$\s^i \circ H^k(\K) \cap \s^j \circ H^k(\K) = \emptyset$ for all
$0 \le i < j < 2^k$.
\end{lemma}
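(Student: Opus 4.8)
The statement splits into the covering $\K=\bigcup_{j=0}^{2^k-1}\s^j\circ H^k(\K)$ and the disjointness of the $2^k$ pieces. The covering is soft. Iterating the renormalization identity $H\circ\s=\s^2\circ H$ gives $H^k\circ\s=\s^{2^k}\circ H^k$, and since $H^k(\rho_0)=\rho_0$ this yields $H^k(\orb_\s(\rho_0))\subseteq\orb_\s(\rho_0)$; by continuity $H^k(\K)\subseteq\K$, so each $\s^j\circ H^k(\K)\subseteq\K$. Conversely, given $x\in\K$ write $x=\lim_i\s^{n_i}(\rho_0)$ and split $n_i=2^kq_i+r_i$ with $0\le r_i<2^k$; then $\s^{n_i}(\rho_0)=\s^{r_i}\circ H^k(\s^{q_i}\rho_0)$ by the same identity, and passing to a subsequence along which $r_i$ is a constant $r$ and $\s^{q_i}(\rho_0)\to z\in\K$ gives $x=\s^r\circ H^k(z)$ by continuity.

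The disjointness is the substance of the lemma; it amounts to the uniqueness of the Thue--Morse block decomposition (a form of recognizability) and I would prove it by contradiction. Suppose $x\in\s^i\circ H^k(\K)\cap\s^j\circ H^k(\K)$ with $0\le i<j<2^k$, say $x=\s^iH^k(z)=\s^jH^k(w)$ for $z,w\in\K$. Applying $\s^{2^k-j}$ and using $H^k\circ\s=\s^{2^k}\circ H^k$ once more, I pass to the single sequence $s:=\s^{2^k-j}(x)\in\K$, for which
\[
s=H^k(w')\ \ (w'=\s w\in\K)\qquad\text{and}\qquad s=\s^{c}H^k(z),\quad c=2^k-(j-i)\in\{1,\dots,2^k-1\}.
\]
Put $c'=2^k-c=j-i\in\{1,\dots,2^k-1\}$. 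These two expressions exhibit two block decompositions of $s$ into the blocks $\tau_k=H^k(0)$ and $\bar\tau_k=H^k(1)$ (each of length $2^k$): the first has block boundaries at the multiples of $2^k$ and block-type sequence $w'$; the second has block boundaries at $\{c'+2^kp:p\ge0\}$ and block-type sequence $\s(z)$. (The initial shift by $\s^{2^k-j}$ serves only to make both decompositions genuine starting from position $0$, removing edge effects at the origin.)

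Let $X_p\in\{\tau_k,\bar\tau_k\}$ be the $p$-th block of the first decomposition and $Y_p\in\{\tau_k,\bar\tau_k\}$ the $p$-th block of the second. For each $p\ge0$ the block $Y_p$ overlaps $X_p$ in $X_p$'s last $2^k-c'$ letters and overlaps $X_{p+1}$ in $X_{p+1}$'s first $c'$ letters; reading these two overlap windows both ways shows that the length-$(2^k-c')$ suffix of $X_p$ equals the length-$(2^k-c')$ prefix of $Y_p$, and the length-$c'$ suffix of $Y_p$ equals the length-$c'$ prefix of $X_{p+1}$. Now $\bar\tau_k=\overline{\tau_k}$ is the symbol-wise complement of $\tau_k$ (since $H(\overline a)=\overline{H(a)}$), so the length-$\ell$ prefixes of $\tau_k$ and $\bar\tau_k$ are complementary, hence distinct, for every $\ell\ge1$, and likewise for suffixes. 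Therefore each of the two overlap relations is realized by a bijection of the two-element set $\{\tau_k,\bar\tau_k\}$, and these bijections do not depend on $p$; composing them, $X_{p+1}=\phi(X_p)$ for a fixed bijection $\phi$. Hence the type sequence $w'\in\K$ is either constant or $2$-periodic, so some shift of $w'$ equals $0^\infty$, $1^\infty$, $(01)^\infty$, or $(10)^\infty$ --- but none of $000,111,01010,10101$ occurs in $\K$ by property (5) above. This contradiction shows the pieces are pairwise disjoint.

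The heart of the argument is the bookkeeping of the two interleaved decompositions together with the observation that complementarity $\bar\tau_k=\overline{\tau_k}$ forces the overlap relations to be bijective; once this is in place the conclusion is one line, and one can in fact dispense with Corollary~\ref{cor-limited-overlap}. The only other point needing care is the behaviour near position $0$, which is handled by normalizing via $\s^{2^k-j}$ at the outset so that both block decompositions are honest from the first symbol on.
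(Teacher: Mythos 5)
Your proof is correct, and it takes a genuinely different route from the paper's. The covering half uses the same idea---the intertwining $H^k\circ\s=\s^{2^k}\circ H^k$---but you run it directly for all $k$, whereas the paper establishes only $k=1$ explicitly and then reaches general $k$ by a formal induction. The real divergence is in the disjointness argument. The paper verifies $k=1$ by writing $H(a)$ and $\s\circ H(b)$ out digit-by-digit and observing that their equality would force $x=(01)^\infty$ or $(10)^\infty\notin\K$, then lifts this to all $k$ via injectivity of $H$ and the induction hypothesis. You instead attack general $k$ head-on through two competing $\{\tau_k,\bar\tau_k\}$-block decompositions of a single $s\in\K$; the complementarity $\bar\tau_k=\overline{\tau_k}$ forces each overlap relation to act as a bijection of the two-letter alphabet, so the block-type sequence $w'$ must be constant or period-$2$, contradicting the almost-square-freeness of $\K$ (property (5)). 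Your version dispenses with induction and makes the recognizability mechanism transparent, at the cost of invoking square-freeness, which the paper's proof never needs; the paper's version is the more elementary and self-contained one. (Your closing remark about Corollary~\ref{cor-limited-overlap} is a harmless aside---the paper's proof of this lemma does not rely on that corollary either, and the corollary by itself would leave the borderline offset $j-i=2^{k-1}$ unresolved, so your bijection argument indeed does more work than the overlap bound alone.)
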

\begin{proof}
Take $x \in \K$, so there is a sequence $(n_k)_{k \in \N}$ such that
$x = \lim_k \s^{n_k}(\rho_0)$. If this sequence contains infinitely many even integers, then 
$x = \lim_k \s^{2m_k}(\rho_0) = \lim_k \s^{2m_k} \circ H(\rho_0)
= \lim_k H \circ  \s^{m_k}(\rho_0) \in H(\K)$. Otherwise, 
 $(n_k)_{k \in \N}$ contains infinitely many odd integers and
$x = \lim_k \s^{1+2m_k}(\rho_0) = \lim_k \s \circ \s^{2m_k} \circ H(\rho_0)
= \lim_k \s \circ  H \circ  \s^{m_k}(\rho_0) \in 
\s \circ H(\K)$.
Therefore
$\K \subset H(\K) \cup \s \circ H(\K)$.

Now if $x = H(a) = \s \circ H(b) \in \K$, then 
$$
x =  a_{0}\bar a_0 a_{1}\bar a_1 a_{2}\bar a_2\ldots
=  \bar b_0 b_{1}\bar b_1 b_{2}\bar b_2\ldots\ ,
$$
so $\bar b_0 = a_0 \neq \bar a_0 = b_1 \neq \bar b_1 = a_1 \neq \bar a_1 = b_2 \neq b_2 = a_2$. Therefore
$x = 101010\dots$ or $010101\dots$, but neither belongs to $\K$.

Now for the induction step, assume 
$\K = \bigsqcup_{j=0}^{2^k-1} \s^j \circ H^k(\K)$.
Then since $H$ is one-to-one,
\begin{eqnarray*}
\K &=& \bigsqcup_{j=0}^{2^k-1} \s^j \circ H^k(H(\K) \sqcup \s \circ H(\K)) \\
&=&  \left(\bigsqcup_{j=0}^{2^k-1} \s^j \circ H^{k+1}(\K) \right)
\bigsqcup \left( \bigsqcup_{j=0}^{2^{k-1}} \s^j \circ H^k \circ \s \circ H(\K)) \right)\\
&=&  \left(\bigsqcup_{j=0}^{2^k-1} \s^j \circ H^{k+1}(H(\K) \right)
\bigsqcup \left( \bigsqcup_{j=0}^{2^k-1} \s^{j+2^k} \circ H^{k+1}(\K)) \right)\\
&=&  \bigsqcup_{j=0}^{2^{k+1}-1} \s^j \circ H^{k+1}(\K).
\end{eqnarray*}
\end{proof}

\begin{lemma}
\label{lem-sigmacirch}
Let $x$ be in the cylinder $[ab]$ with $a, b \in \{0,1\}$. 
Then the accumulation point of $(\s\circ H)^k(x)$ are $0\rho_b$ and 
$1\rho_b$. 
More precisely, the $(\s\circ H)^{2k}(x)$ converges to $a\rho_{b}$ 
and $(\s\circ H)^{2k+1}(x)$ converges to $\bar a\rho_{b}$.
\end{lemma}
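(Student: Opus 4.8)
The plan is to collapse the iterate $(\s\circ H)^k$ into a single power of $\s$ applied to $H^k$, and then read off the limit points directly from the block structure of the substitution words $H^k(a)$. Everything takes place in the full shift $\S$; the only fact about the fixed points $\rho_b$ that is needed is $H(\rho_b)=\rho_b$. \textbf{Step 1 (reduction formula).} First I would prove by induction on $k$ that
$$
(\s\circ H)^k=\s^{2^k-1}\circ H^k .
$$
The case $k=1$ is immediate, and the inductive step uses only the renormalization relation $H\circ\s=\s^2\circ H$, iterated to $H^k\circ\s=\s^{2^k}\circ H^k$: then $(\s H)^{k+1}=(\s H)^k\,\s H=\s^{2^k-1}(H^k\s)H=\s^{2^k-1}\s^{2^k}H^{k+1}=\s^{2^{k+1}-1}H^{k+1}$.

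\textbf{Step 2 (block picture and parity).} Write $x=x_0x_1x_2\dots$. Since $H^k$ acts letterwise with all blocks $H^k(a)$ of length $2^k$, one has $H^k(x)=H^k(x_0)\,H^k(x_1)\,H^k(x_2)\cdots$, and deleting the first $2^k-1$ symbols removes all of $H^k(x_0)$ except its last symbol, so
$$
(\s\circ H)^k(x)=\ell_k(x_0)\cdot H^k(\s x),
$$
where $\ell_k(a)$ denotes the terminal symbol of $H^k(a)$. Because $H(0)=01$ and $H(1)=10$, the terminal symbol of $H(w)$ is the complement of that of $w$; hence $\ell_{k+1}(a)=\overline{\ell_k(a)}$, and therefore $\ell_k(a)=a$ when $k$ is even and $\ell_k(a)=\bar a$ when $k$ is odd.

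\textbf{Step 3 (convergence and accumulation points).} For $x\in[ab]$ we have $x_0=a$ and $\s x\in[b]$, so $H^k(\s x)$ begins with the block $H^k(b)$, which is the length-$2^k$ prefix of $H^k(\rho_b)=\rho_b$. Hence $(\s\circ H)^k(x)$ agrees with $\ell_k(a)\rho_b$ in its first $2^k+1$ symbols, i.e.\ $d\big((\s\circ H)^k(x),\ell_k(a)\rho_b\big)\le 2^{-(2^k+1)}\to 0$. Substituting $\ell_{2m}(a)=a$ and $\ell_{2m+1}(a)=\bar a$ gives $(\s\circ H)^{2m}(x)\to a\rho_b$ and $(\s\circ H)^{2m+1}(x)\to\bar a\rho_b$, and since $\{a,\bar a\}=\{0,1\}$ the set of accumulation points of $\big((\s\circ H)^k(x)\big)_k$ is exactly $\{0\rho_b,1\rho_b\}$.

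I do not expect a genuine obstacle here: the lemma is pure symbolic combinatorics, and nothing about the Cantor attractor $\K$ enters beyond $H(\rho_b)=\rho_b$. The only points needing care are the exponent bookkeeping in the identity $(\s\circ H)^k=\s^{2^k-1}\circ H^k$ and the parity of the terminal symbol $\ell_k(a)$, both of which are handled by the short inductions above.
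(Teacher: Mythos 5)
Your proof is correct and follows essentially the same route as the paper's: both unwind $(\s\circ H)^k$ symbolically and observe that the leading symbol alternates between $a$ and $\bar a$ while the tail is $H^k(b)\cdots$, a prefix of $\rho_b$. The paper does this by a short direct induction on $(\s\circ H)^k(x)=c\,H^k(b)\cdots$ with $c$ flipping each step, whereas you package the same computation into the clean identity $(\s\circ H)^k=\s^{2^k-1}\circ H^k$ plus the block decomposition of $H^k$, which is a slightly more modular presentation but not a genuinely different argument.
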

\begin{proof}
By definition of $H$ we get $H(x)= a \bar a H(b)\ldots$ 
Hence $\s\circ H(x)=\bar a H(b)\ldots$ By induction we get 
$$
(\s\circ H)^{2k}(x)=a H^{2k}(b)\ldots
\quad \mbox{ and } \quad (\s\circ H)^{2k+1}(x)=\bar a H^{2k+1}(b).
$$
Therefore $H^{n}(b)$ converges to $\rho_{b}$, for $b=0,1$. 
\end{proof}

%%%%%%%%%%%%%%%%%%%%%%%%%%%%%%%%%%
\subsection{Continuous fixed points of $\CR$ on $\K$: Proof of Theorem~\ref{theo-super-uc}}\label{subsec:fixed_on_K}

We recall that we have $\CR (V) = V \circ \s \circ H + V \circ H$.
Therefore
\begin{eqnarray*}
\CR^2V &=& \CR(V \circ \s \circ H + V \circ H) \\
&=& V \circ \s \circ H \circ \s \circ H + V \circ \s \circ H^2 +
V \circ H \circ \s \circ H + V \circ H^2 \\
&=& V \circ \s^3 \circ H^2 + V \circ \s^2 \circ H^2 +
V \circ \s \circ H^2 + V \circ H^2, 
\end{eqnarray*}
and in general
$$
\CR^nV = S_{2^n}V \circ H^n \quad \text{ where } \quad(S_kV)(x) = \sum_{i=0}^{k-1} V \circ \s^i(x)
$$
is the $k$-th ergodic sum.

\begin{lemma}\label{lem-integralV}
If $V \in L^1(\mu_\K)$ is a fixed point of $\CR$, then $\int_{\K} V \ d\mu_{\K} = 0$.
\end{lemma}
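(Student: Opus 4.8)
The plan is to integrate the fixed-point identity $V=\CR V=V\circ\s\circ H+V\circ H$ over $\K$ against $\mu_\K$, using a self-similarity of $\mu_\K$ coming from the substitution. The key point I would establish first is the measure-level renormalization identity
$$
H_*\mu_\K+(\s\circ H)_*\mu_\K=2\,\mu_\K .
$$
Since $H(\K)\subseteq\K$ (as $\K$ is closed, $\s$-invariant and $H$ fixes $\rho_0$), also $\s\circ H(\K)\subseteq\s(\K)=\K$, so $\mu\mapsto L^*\mu:=\tfrac12 H_*\mu+\tfrac12(\s\circ H)_*\mu$ maps Borel probability measures on $\K$ to Borel probability measures on $\K$. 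Using $\s^2\circ H=H\circ\s$ and $\s_*\mu_\K=\mu_\K$ one computes
$$
\s_*\big(L^*\mu_\K\big)=\tfrac12(\s\circ H)_*\mu_\K+\tfrac12(\s^2\circ H)_*\mu_\K
=\tfrac12(\s\circ H)_*\mu_\K+\tfrac12(H\circ\s)_*\mu_\K=L^*\mu_\K ,
$$
so $L^*\mu_\K$ is a $\s$-invariant probability measure on $\K$; by unique ergodicity of $(\K,\s)$ it equals $\mu_\K$, which is the claimed identity. (An equivalent route: $H$ is a topological conjugacy from $(\K,\s)$ onto $(H(\K),\s^2)$, so the latter is uniquely ergodic, while $2\mu_\K|_{H(\K)}$ is a $\s^2$-invariant probability on $H(\K)$ by Lemma~\ref{lem:disjoint}; hence $H_*\mu_\K=2\mu_\K|_{H(\K)}$, and applying $\s_*$ gives $(\s\circ H)_*\mu_\K=2\mu_\K|_{\s\circ H(\K)}$, whose sum is $2\mu_\K$ since $\K=H(\K)\sqcup\s\circ H(\K)$.)

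The identity in particular gives $H_*\mu_\K\le 2\mu_\K$ and $(\s\circ H)_*\mu_\K\le 2\mu_\K$, so for $V\in L^1(\mu_\K)$ both $V\circ H$ and $V\circ\s\circ H$ lie in $L^1(\mu_\K)$ and the change-of-variables formula is legitimate. Integrating the fixed-point equation and reading the right-hand side as an integral against the pushforwards then yields
$$
\int_\K V\,d\mu_\K=\int_\K V\circ\s\circ H\,d\mu_\K+\int_\K V\circ H\,d\mu_\K
=\int_\K V\,d\big((\s\circ H)_*\mu_\K+H_*\mu_\K\big)=2\int_\K V\,d\mu_\K ,
$$
and therefore $\int_\K V\,d\mu_\K=0$.

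The whole argument rests on the first step; once the self-similarity $H_*\mu_\K+(\s\circ H)_*\mu_\K=2\mu_\K$ is available the conclusion is immediate, and the only delicate point is the routine integrability bookkeeping for the merely-$L^1$ potential $V$ (the fixed-point equation being read pointwise on $\K$, as elsewhere in the paper), which is handled by the domination just noted. For a \emph{continuous} fixed point $V$ there is also a shorter variant avoiding pushforwards: from $\CR^nV=S_{2^n}V\circ H^n$ and unique ergodicity of $(\K,\s)$ one has $S_{2^n}V(z)=2^n\int_\K V\,d\mu_\K+o(2^n)$ uniformly for $z\in\K$; evaluating at $z=H^n(x)\in\K$ gives $V(x)=S_{2^n}V(H^nx)=2^n\int_\K V\,d\mu_\K+o(2^n)$, and since the left-hand side is a fixed real number this forces $\int_\K V\,d\mu_\K=0$.
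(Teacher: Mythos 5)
Your main argument is correct and takes a genuinely different route from the paper's. The paper iterates $\CR$ to get $\CR^n V = S_{2^n}V\circ H^n$, writes $\tfrac1{2^n}V(y) = \tfrac1{2^n}S_{2^n}V(H^ny)$ for a ``typical'' $y$, and lets $n\to\infty$; the left side vanishes and the right side is asserted to tend to $\int_\K V\,d\mu_\K$. That last step is a Birkhoff-type statement, but the base point $H^n(y)$ changes with $n$, so for $V$ merely in $L^1(\mu_\K)$ the pointwise ergodic theorem does not directly apply; the argument is clean only for continuous $V$, where unique ergodicity gives the required uniform convergence of Birkhoff averages. Your approach avoids this: you prove the measure-level self-similarity $H_*\mu_\K + (\s\circ H)_*\mu_\K = 2\mu_\K$ (either by showing $L^*\mu_\K$ is $\s$-invariant and invoking unique ergodicity, or directly from Lemma~\ref{lem:disjoint}), note that this dominates the pushforwards by $2\mu_\K$ so that $V\circ H, V\circ\s\circ H\in L^1(\mu_\K)$, and then a one-line change of variables gives $\int_\K V\,d\mu_\K = 2\int_\K V\,d\mu_\K$. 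This is the more robust argument for the $L^1$ hypothesis as stated; your closing ``shorter variant'' for continuous $V$ is essentially what the paper does.
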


\begin{proof}
For any typical (w.r.t.\ Birkhoff's Ergodic Theorem) $y \in \K$ we get 
$$
V(y)=(\CR^{n}V)(y)=\sum_{j=0}^{2^{n}-1}V\circ \s^j\circ H^{n}(y).
$$ 
Hence 
$$
\frac1{2^{n}}V(y)=\frac1{2^{n}}\sum_{j=0}^{2^{n}-1}V\circ \s^j\circ H^{n}(y).
$$
The left hand side tend to $0$ as $n \to \8$ and the right hand side 
tends to $\int_{\K}V\,d\mu_{\K}$.
 \end{proof}

\begin{lemma}\label{lem-Vpreimage-rhoi}
Let $W$ be any continuous fixed point for $\CR$ (on $\K$). Then, 
for $j=0,1$,
$$
W(01\rho_{j})+W(10\rho_{j})=0\quad \mbox{ and }\quad W(1\rho_{j})=
W(10\rho_{j})+W(0\rho_{j}).
$$
\end{lemma}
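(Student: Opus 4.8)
The plan is to exploit the identity $\CR^n V = S_{2^n}V \circ H^n$ together with the fact (Lemma~\ref{lem-sigmacirch}) that the orbit of a point in a fixed cylinder under $\s\circ H$ accumulates on the points $a\rho_b$. Recall first that $\CR W = W$ means $W = W\circ\s\circ H + W\circ H$; composing this with $H$ on the right and iterating, $\CR W = W$ is equivalent to $\CR^n W = W$ for all $n$, i.e. $W = S_{2^n}W\circ H^n$. The key structural observation is that, by Lemma~\ref{lem:disjoint}, the $2^n$ sets $\s^j\circ H^n(\K)$, $0\le j<2^n$, partition $\K$; so for a fixed $x\in\K$ the $2^n$ points $\s^j\circ H^n(x)$, as $j$ ranges over $0,\dots,2^n-1$, are ``spread out'' over all these pieces, and Lemma~\ref{lem-sigmacirch} tells us precisely which $a\rho_b$ each sub-orbit $(\s\circ H)^k(x)$ approaches.

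More concretely, I would apply the fixed-point identity at the single level $n=1$: $W(x) = W(\s\circ H(x)) + W(H(x))$ for all $x\in\K$. Evaluate this at $x = \rho_j$ for $j=0,1$. Since $\rho_j$ is a fixed point of $H$, we get $H(\rho_j) = \rho_j$, so the identity reads $W(\rho_j) = W(\s(\rho_j)) + W(\rho_j)$, hence $W(\s\rho_j) = 0$. Now iterate the renormalization identity once more, or rather apply $\CR W = W$ to the point $\s\rho_j$ (which lies in $\K$): writing $\s\rho_j$ in the cylinder $[ab]$ appropriate to $\rho_j$ and using $H(\s\rho_j) = $ a shift of $\rho_j$, one unwinds the two desired relations. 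In fact, because $\rho_0 = 0110\dots$ and $\rho_1 = 1001\dots$, the points $0\rho_j, 1\rho_j, 01\rho_j, 10\rho_j$ are exactly the relevant preimages under $\s$ and under $H$ of $\rho_j$: one checks $H(0\rho_j)$ and $H(1\rho_j)$ begin with $01$ and $10$ respectively and then continue as $H(\rho_j)=\rho_j$, so $\s\circ H(0\rho_j) = 1\rho_j$ and $H(0\rho_j) = 01\rho_j$, giving $W(0\rho_j) = W(1\rho_j) + W(01\rho_j)$ — and the symmetric computation with $1\rho_j$ gives $W(1\rho_j) = W(0\rho_j) + W(10\rho_j)$. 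Adding these two yields $W(01\rho_j) + W(10\rho_j) = 0$, and the second relation $W(1\rho_j) = W(10\rho_j) + W(0\rho_j)$ is then one of the two computed identities directly.

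The only real point requiring care is the bookkeeping of which concatenations $0\rho_j$, $1\rho_j$, $01\rho_j$, $10\rho_j$ actually lie in $\K$ (so that the fixed-point relation, which a priori is only assumed to hold on $\K$, may legitimately be applied to them) and the verification that $H$ and $\s\circ H$ send them to each other as claimed. This is a finite check using the prefixes of $\rho_0 = 0110\,1001\dots$ and $\rho_1 = 1001\,0110\dots$ and the definition $H:0\mapsto 01,\,1\mapsto 10$: one verifies $\s\rho_0 = 110\dots$ and $\s\rho_1 = 001\dots$ are in $\K$, that $0\rho_0, 1\rho_0$ etc. are shift-preimages of points of $\K$ lying in $\K$ (using almost-invertibility of $\s$ on $\K$, cf.\ Lemma~\ref{lem-invertible}), and that $H(a\rho_j) = a\bar a\,\rho_j$ so that $\s\circ H(a\rho_j) = \bar a\,\rho_j$ and $H(a\rho_j) = a\bar a\rho_j \in\{01\rho_j, 10\rho_j\}$. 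I expect no genuine obstacle here — the substance is entirely in the algebraic identity $\CR W = W$ evaluated along the $H$-orbit of $\rho_j$, and the lemma is essentially a ``boundary condition'' that this orbit structure imposes on any continuous fixed point.
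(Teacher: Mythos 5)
Your concrete argument is correct and in fact slightly more direct than the paper's. The paper applies the fixed-point identity at the points $(\s\circ H)^n(x)$ for $x$ in a $2$-cylinder, then uses Lemma~\ref{lem-sigmacirch} together with continuity of $W$ to pass to the limit and obtain $W(i\rho_j) = W(i\bar{i}\rho_j) + W(\bar{i}\rho_j)$. You instead evaluate the fixed-point identity $W = W\circ H + W\circ\s\circ H$ directly at the points $i\rho_j$. This is legitimate because $\rho_j$ is left-special (property (1) in Subsection~\ref{subsec-genresult-TM}), so $0\rho_j$ and $1\rho_j$ both lie in $\K$; moreover $H(\K)\subset\K$ and $\s\circ H(\K)\subset\K$, so all three points appearing in the identity lie in $\K$ and the hypothesis applies. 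The algebra $H(i\rho_j)=i\bar{i}\rho_j$, $\s\circ H(i\rho_j)=\bar{i}\rho_j$ is exactly what the paper also uses (it is why Lemma~\ref{lem-sigmacirch} produces those limit points), and adding the $i=0$ and $i=1$ cases gives the first claimed equation, while $i=1$ alone is the second.

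What your route buys is that continuity of $W$ never enters: you only apply the hypothesis $\CR W = W$ pointwise on $\K$, whereas the paper takes a limit and therefore needs continuity at least at $i\rho_j$ and $i\bar{i}\rho_j$, which Remark~\ref{rem-continuity-weaker} observes is the minimal version of the continuity hypothesis needed for their argument; your version dispenses with even that. Two points of hygiene: (i) the opening paragraph about $\CR^n V = S_{2^n}V\circ H^n$ and the partition $\K=\sqcup_j\s^j\circ H^n(\K)$ is not used by the calculation that follows and should be cut, as should the digression via $W(\s\rho_j)=0$, which is a true but unnecessary by-product; (ii) the citation for $0\rho_j, 1\rho_j\in\K$ should be the left-specialness of prefixes of $\rho_j$ (property (1) of Subsection~\ref{subsec-genresult-TM}) rather than Lemma~\ref{lem-invertible}, which concerns a different aspect of the almost-invertibility.
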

\begin{proof}
Using the equality $W(x)=(\CR W)(x)=W\circ H(x)+W\circ\s\circ H(x)$ we immediately get:
$$W\circ (\s\circ H)^n(x)=W\circ H\circ(\s\circ H)^n(x)+W\circ (\s\circ H)^{n+1}(x).$$
Using Lemma~\ref{lem-sigmacirch} on this new equality, we obtain 
$$
W(i\rho_{j})=W(i\bar{i}\rho_{j})+W(\bar{i}\rho_{j}),
$$
for $i, j \in \{0,1\}$. This gives the second equality of the lemma 
(for $i=1$). The symmetric formula is obtained from the case $i=0$,
and then adding both formulas yields $W(01\rho_{j})+W(10\rho_{j})=0$.
\end{proof}

\begin{remark}\label{rem-continuity-weaker}
Lemma~\ref{lem-Vpreimage-rhoi} still holds if the potential is only 
continuous at points of the form $i\rho_{j}$ and $i\bar i\rho_{j}$ 
with $i,j\in\{0,1\}$. $\hfill \blacksquare$
\end{remark}

Recall the one-parameter family of potentials $U_c$ from \eqref{eq:Uc}.
They are fixed points of $\CR$, not just on $\K$, but globally on $\S$.
Let $i:\S \to \S$ be the involution changing digits $0$ to $1$ 
and vice versa. Clearly $U_c = -U_c \circ i$.
We can now prove Theorem~\ref{theo-super-uc}.

\begin{proof}[Proof of Theorem~\ref{theo-super-uc}] 
Let $W$ be a potential on $\K$, that is fixed by $\CR$. We assume that the 
variations are summable:
$\sum_{k=1}^{\8}\Var_{k}(W)<\8$. 

We show that $W$ is constant on $2$-cylinders. Let $x=x_{0}x_{1}\ldots$ and $y=y_{0}y_{1}\ldots$ be in the same $2$-cylinder (namely $x_{0}=y_{0}$ and $x_{1}=y_{1}$). Then, for every $n$, $H^n(x)$ and $H^n(y)$ coincide for (at least) $2^{n+1}$ digits. Therefore  
\begin{eqnarray*}
|W(x)-W(y)|&=& |(\CR^{n}W)(x)-(\CR^{n}W)(y)|\\
&=& |(S_{2^{n}}W)(H^n(x))-(S_{2^n}W)(H^n(y))|\\
&\le& \sum_{k=2^n+1}^{2^{n+1}}\Var_{k}(W).
\end{eqnarray*}
Convergence of the series $\sum_{k}\Var_{k}(W)$ implies that  $\sum_{k=2^n+1}^{2^{n+1}}\Var_{k}(W) \to 0$ as $n \to \8$. 
This yields that $W$ is constant on $2$-cylinders. 

Lemma~\ref{lem-Vpreimage-rhoi} shows that
$W|_{[01]}=-W|_{[10]}$. Again, the second equality in
that lemma used for both $\rho_{0}$ and $\rho_{1}$ shows that
$W|_{[00]}=W|_{[11]}=0$. 
Therefore $W = U_c$ with $c = W(\rho_0)$, and
the proof is finished. 
\end{proof}

%%%%%%%%%%%%%%%%%%%%
\subsection{Global fixed points for $\CR$: Proof of Theorem~\ref{theo-vtilde}}\label{subsec-theo-vtilde}

To give an idea why Theorem~\ref{theo-vtilde} holds, observe that
the property $V(x) = \frac1n + o(\frac1n)$ if $d(x, \K) = 2^{-n}$
(so $V$ vanishes on $\K$ but is positive elsewhere) is in spirit preserved under
iterations of $\CR$, provided the shift $\s$ doubles the distance 
from $\K$. Let  $\CH$ denote the class of potentials satisfying this property.
Choose $x$ such that $d(x,\K) = 2^{-m}$.
Taking the limit of Riemann sums, and since $\CR$ preserves 
the class of non-negative functions, we obtain
\begin{eqnarray*}
0 \le (\CR^nV)(x) &=& \sum_{j=0}^{2^n-1} \frac{1}{2^nm-j} +
\sum_{j=0}^{2^n-1} o(\frac{1}{2^nm-j})  \\
&\rightarrow_{\ninf}& (1+o(1)) \int_0^1 \frac{1}{m-t} \ dt \\
&=& (1+o(1)) \log \frac{m}{m-1} = 
\frac1m + o(\frac1m).
\end{eqnarray*}
However, it may happen that $d(\s(y), \K) < 2d(y, \K)$ for some
$y = \s^j \circ H^n(x)$, in which case we speak of an accident (see Definition~\ref{def-accident}).
The proof of the proposition includes
an argument that accidents happen only infrequently,
and far apart from each other.

\begin{remark}\label{rem-o1m}
We emphasize an important bi-product of the previous computation. If $V$ is of the form $V(x)=o(\frac1m)$ when $d(x,\K)=2^{-m}$, then $\CR^{n}(V)$ converges to 0. See also Proposition~\ref{prop-renorm-power}.
$\hfill \blacksquare$
\end{remark}

\begin{proof}[Proof of Theorem~\ref{theo-vtilde}]
The proof has three steps. In the first step we prove that the class $\CH$ is invariant under $\CR$. In the second step we show that  $\CR^{n}(V_{0})$, with $V_{0}$ defined by $V_0(x) = \frac1m$ if $d(x,\K) = 2^{-m}$, is positive (outside $\K$) and bounded from above. In the last step we deduce from the two first steps that there exists a unique fixed point and that it is continuous and positive. We also briefly explain why it gives the result for any $V \in \CH$. 

\medskip
\noindent
\emph{Step 1.}
We recall that $\CR$ is defined by 
$(\CR V)(x):=V\circ H(x)+V\circ\s\circ H(x)$.
As $H$ and $\s$ are continuous, $\CR(V)$ is continuous if $V$ is continuous. Let $x\in \S$,
then if $x_{K}\in\K$ is such that 
\begin{equation}\label{eq:double} 
d(x,\K)=d(x,x_{K})=2^{-m}, \text{ then } d(H(x), H(x_K)) = 2^{-2m}.
\end{equation}
We claim that if $m \ge 3$, then $d(H(x),\K) = d(H(x),H(x_{K}))$.
Let us assume by contradiction that $y\in \K$ is such that 
$d(H(x),\K)=d(H(x),y)<d(H(x),H(x_{K}))$. 
By Lemma~\ref{lem:disjoint}, $y$ belongs either to $H(\K)$ or to $\s\circ H(\K)$. In the first case, say $H(z)=y$, we get 
$$
d(H(x),H(z)) < d(H(x),H(x_{K})).
$$
This would yield $d(x,z)<d(x,x_{K})$ which contradicts the fact that $d(x,\K) = d(x,x_{K})$. 

In the other case, say $y=\s\circ H(z)$, $m\ge 3$ yields $H(x) = a_{0}\bar a_0a_{1}\bar a_1a_{2}\bar a_2\ldots$ and  $\s\circ H(z) = \bar b_0 b_{1}\bar b_1b_{2}\bar b_2\ldots$. As in the proof of Lemma~\ref{lem:disjoint} this would show that  $y$ must start with $010101$
 or $101010$. However, both are forbidden in $\K$ and this produces a contradiction.  This finishes the proof of the claim. 

 Lemma~\ref{lem-invertible} also shows that 
$d(\s\circ H(x),\K)=2^{-(2m-1)}=d(\s\circ H(x),\s\circ H(x_{K}))$. Therefore 
\begin{equation}\label{eq_R}
(\CR V)(x)=V\circ H(x)+V\circ \s\circ H(x)=
\frac1{{2m}}+\frac1{{2m-1}}+o(\frac1{{m}})=\frac1{m}+o(\frac1m).
\end{equation}

\medskip
\noindent
\emph{Step 2.} We establish upper and lower bounds for $\CR^n(V_0)$
where $V_{0}$ is defined by $V_0(x) = \frac1m$ if $d(x,\K) = 2^{-m}$. Let $x \in \S$ be such that $d(x,\K) = 2^{-m}$, and pick $x_K \in \K$ 
such that $x$ and $x_K$ coincide for exactly $m$ initial digits. Due to the definition of $\K$, $m \ge 2$ (for any $x$) 
but we assume in the following that $m \ge 3$. By \eqref{eq:double} 
we have $d(H^nx, \K) = d(H^nx, H^nx_K) = 2^{-2^nm}$.
Assume that the first digit of $x$ is $0$. Then $H^{n}(x)$ coincides with $\rho_{0}$ at least for $2^{n}$ digits.

 Assume now that $H^nx$ has an accident at the $j$-th shift, $1\le j < 2^n$,
so there is $y \in \K$ such that $d(\s^j \circ H_n(x), y) <2
d(\s^j \circ H_n(x), \s^j \circ H_n(x_K))$.

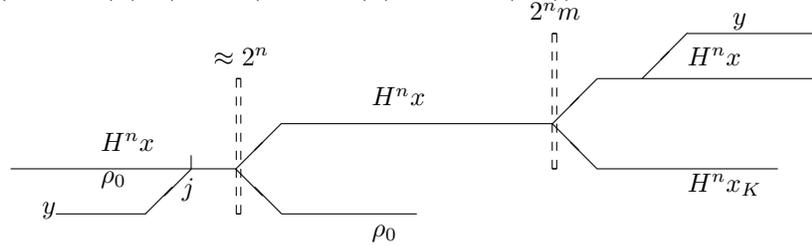
\begin{figure}[ht]
\unitlength=6mm
\begin{picture}(18,5)(0,0)
\put(0,2){\line(1,0){5}}
  \put(2,1.7){\small $\rho_0$}  \put(2,2.4){\small $H^nx$} 
\put(5,2){\line(1,1){1}} \put(5,2){\line(1,-1){1}}
\put(4,2){\line(0,1){0.3}}\put(3.8,1.4){\small $j$}
\put(4,2){\line(-1,-1){1}}\put(3,1){\line(-1,0){2}}\put(0.7,1){\small $y$}
\put(5,1){\dashbox{0.2}(0.1,3)}\put(4.5,4.3){\small $\approx 2^n$} 
\put(6,3){\line(1,0){6}}
  \put(8,3.4){\small $H^nx$} 
\put(6,1){\line(1,0){3}}
  \put(8,0.5){\small $\rho_0$} 
\put(12,3){\line(1,1){1}} \put(12,3){\line(1,-1){1}}
\put(12,2){\dashbox{0.2}(0.1,3)}\put(11.5,5.3){\small $2^nm$} 
\put(13,4){\line(1,0){5}}
  \put(15,4.3){\small $H^nx$} 
  \put(14, 4){\line(1,1){1}}\put(15,5){\line(1,0){3}}\put(16,5.2){\small $y$}
\put(13,2){\line(1,0){4}}
  \put(15,1.5){\small $H^nx_K$} 
\end{picture}
\caption{Half of the sum $\CR^nV$ can easily be estimated.}\label{fig:compare}
\end{figure}

The last point in Proposition~\ref{prop-time-accident} shows $j\ge 2^{n-1}$. 
Therefore, using again that the sum approximates the Riemann integral,
\begin{eqnarray*}
(\CR^nV_0)(x) \ge \frac{1}{2^n}  \sum_{j=0}^{(2^n/2)-1} \frac{1}{m-j/2^n} 
\to_{\ninf} \int_0^\frac12 \frac{1}{m-x} \ dx \ge \frac1{2m}. 
\end{eqnarray*}
The worst case scenario for the upper bound is when there is no accident,
and then 
\begin{equation}
\label{equ-riemansum-upper}
(\CR^nV_{0})(x) = \sum_{j=0}^{2^n-1} \frac{1}{2^nm-j} \to_{\ninf} \int_0^1 \frac{1}{m-x} \ dx \le \frac1{m-1}
\end{equation} 
as required.

\begin{remark}\label{rem-pointproche}
Note that the largest distance between  $\K$ and points $\s^{k}(H^n(x))$ 
with $k\in \llb0,2^n-1\rrb$ is smaller than 
$2^{-(2^nm-2^{n}+1)}\le 2^{-2^n}$. This largest distance thus  tends 
to $0$ super-exponentially fast as $n \to \8$. 
$\hfill \blacksquare$\end{remark}

\medskip
\noindent
\emph{Step 3.}
We prove here  equicontinuity for $\CR^{n}(V_{0})$. Namely, there exists some positive $\kappa$, such that for every $n$, for every $x$ and $y$
$$
|\CR^{n}(V_{0})(x)-\CR^{n}(V_{0})(y)|\le \frac\kappa{|\log_2 d(x,y)|},
$$
holds. 

Assume that $x$ and $y \in \S$ coincide for $m$ digits. 
We consider two cases. 

{\bf Case 1:} $d(x,\K)=2^{-m'}=:d(x,z)$, with $m'<m$ (and $z\in \K$). 

\begin{figure}[ht]
\unitlength=6mm
\begin{picture}(18,5.5)(0,0)
\put(0,2){\line(1,0){5}}
\put(5,2){\line(1,1){1}} \put(5,2){\line(1,-1){1}}
\put(5,1){\dashbox{0.2}(0.1,3)}\put(4.5,4.3){\small $ 2^nm'$} 
\put(6,3){\line(1,0){6}}
\put(6,1){\line(1,0){3}}
  \put(8,0.5){\small $H^n(z)$} 
\put(12,3){\line(1,1){1}} \put(12,3){\line(1,-1){1}}
\put(12,2){\dashbox{0.2}(0.1,3)}\put(11.5,5.3){\small $2^nm$} 
\put(13,4){\line(1,0){5}}
  \put(15,4.4){\small $H^n(x)$} 
\put(13,2){\line(1,0){4}}
  \put(15,1.5){\small $H^n(y)$} 
\end{picture}
%\caption{Graphically comparing sequence in this proof.}\label{fig:compare}
\end{figure}

If there are no accidents for $\s^{j}\circ H^n(x)$ for $j\in \llb 0, 2^n\llb$, then for every $j$,
$$d(\s^j(H^n(x)),\K)=d(\s^j(H^n(y)),\K)=d(\s^j(H^n(x)),\s^j(H^n(z))),$$
and $V_{0}(\s^j(H^n(x)))=V_{0}(\s^j(H^n(y)))$. 
This yields $(\CR^n V_{0})(x)=(\CR^n V_{0})(y)$. 

{\bf Case 2:} 
If there is an accident, say at time $j_{0}$, then two sub-cases can happen. 

Subcase 2-1. The accident is due to a point $z'$ that separates before $2^nm$,
see Figure~\ref{fig:case2-1}. 
 
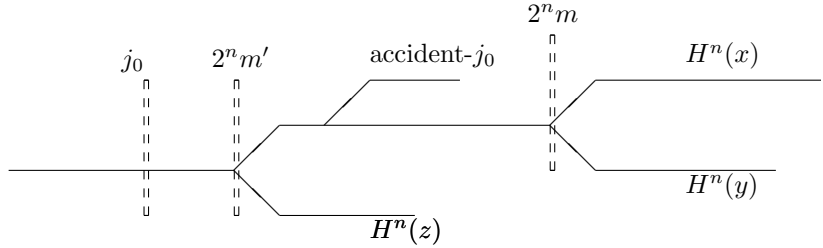
\begin{figure}[ht]
\unitlength=6mm
\begin{picture}(18,6)(0,0)
\put(0,2){\line(1,0){5}}
\put(5,2){\line(1,1){1}} \put(5,2){\line(1,-1){1}}
\put(3,1){\dashbox{0.2}(0.1,3)}\put(2.5,4.3){\small $ j_{0}$}
\put(5,1){\dashbox{0.2}(0.1,3)}\put(4.5,4.3){\small $ 2^nm'$} 
\put(6,3){\line(1,0){6}}
\put(6,1){\line(1,0){3}}
  \put(8,0.5){\small $H^n(z)$} 
\put(7,3){\line(1,1){1}}
 \put(8,4.4){\small accident-$j_{0}$} 
\put(8,4){\line(1,0){2}}
  \put(8,0.5){\small $H^n(z)$} 
\put(12,3){\line(1,1){1}} \put(12,3){\line(1,-1){1}}
\put(12,2){\dashbox{0.2}(0.1,3)}\put(11.5,5.3){\small $2^nm$} 
\put(13,4){\line(1,0){5}}
  \put(15,4.4){\small $H^n(x)$} 
\put(13,2){\line(1,0){4}}
  \put(15,1.5){\small $H^n(y)$} 
\end{picture}
\caption{Comparing sequence when the accident occurs before separation.}\label{fig:case2-1}
\end{figure}
Again, we claim that $V_{0}(\s^j(H^n(x)))=V_{0}(\s^j(H^n(y)))$ holds for $j\le j_{0}-1$, but also for $j \ge j_{0}$ but smaller than the (potential) second accident. Going further, we refer to cases 2-2 or 1. 

Sub-case 2-2. The accident is due to a point much closer to $H^{n}(x)$ than to $H^{n}(y)$, see Figure~\ref{fig:case2-2}.

\begin{figure}[ht]
\unitlength=6mm
\begin{picture}(18,5.5)(0,0)
\put(0,2){\line(1,0){5}}
\put(5,2){\line(1,1){1}} \put(5,2){\line(1,-1){1}}
\put(3,1){\dashbox{0.2}(0.1,3)}\put(2.5,4.3){\small $ j_{0}$}
\put(5,1){\dashbox{0.2}(0.1,3)}\put(4.5,4.3){\small $ 2^nm'$} 
\put(6,3){\line(1,0){6}}
\put(6,1){\line(1,0){3}}
  \put(8,0.5){\small $H^n(z)$} 
  \put(8,0.5){\small $H^n(z)$} 
\put(12,3){\line(1,1){1}} \put(12,3){\line(1,-1){1}}
\put(12,2){\dashbox{0.2}(0.1,3)}\put(11.5,5.3){\small $2^nm$} 
\put(13,4){\line(1,0){5}}
  \put(15,4.2){\small $H^n(x)$} 
  \put(13.5,4){\line(1,1){1}}
 \put(14.5,5.2){\small accident-$j_{0}$} 
\put(14.5,5){\line(1,0){2}}

\put(13,2){\line(1,0){4}}
  \put(15,1.5){\small $H^n(y)$} 
\end{picture}
\caption{Comparing sequence when the accident occurs after separation.}\label{fig:case2-2}
\end{figure}
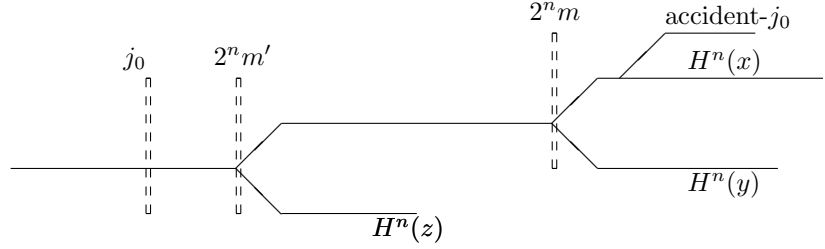
In that case we recall that the first accident cannot happen before $2^{n-1}$, hence $j_{0}\ge 2^{n-1}$. 
Again, for $j\le j_{0}-1$ we get $V_{0}(\s^j(H^n(x)))=V_{0}(\s^j(H^n(y)))$.
By definition of accident we get
$$\max\left\{
V_{0}(\s^{j+2^{n-1}}(H^n(x)))\ , \ V_{0}(\s^{j+2^{n-1}}(H^n(y)))\right\}
\le \frac1{2^nm-2^{n-1}-j}
$$
for $j\ge j_{0}$. This yields 
$$
\left| (\CR^{n} V_{0})(x)-(\CR^n V_{0})(y)\right|
\le \sum_{k=j}^{2^{n-1}}\frac2{2^nm-2^{n-1}-j}=\frac1{2^n}\sum_{k=j}^{2^{n-1}}\frac2{m-\frac12-\frac{j}{2^n}}.
$$ 
This last sum is a Riemann sum and is thus (uniformly in $n$) comparable to the associated integral
$\int_{0}^{\frac12}\frac1{m-\frac12-t}dt
\le \frac{1}{2(m-1)}.$

%%%%%%%%%
%%%%%%%%
\medskip
\noindent
\emph{Step 4.}
Following Step 3, the family $(\frac1n\sum_{k=0}^{n-1}\CR^k(V_{0}))_n$ is equicontinuous (and bounded), hence there exists accumulation points. 
Let us prove that $\disp(\frac1n\sum_{k=0}^{n-1}\CR^k(V_{0}))_n$ actually converges. 

Assume that $\wt V_{1}$ and $\wt V_{2}$ are two accumulation points. Note that both $\wt V_{1}$ and $\wt V_{2}$ are  fixed points for $\CR$. 
 They are continuous functions and Steps 1 and 2 show that they satisfy 
$$\frac1{2m}\le \wt V_{i}(x)\le \frac1{m}+o(\frac1m),$$
if $d(x,\K)=2^{-m}$. From this we get 
$$
\wt V_{1}(x)-\wt V_{2}(x)\le \frac1{2m}+o(\frac1m)=\frac12V_{0}(x)+o(V_{0}(x)),
$$
and then for every $n$, 
$$
\wt V_{1}-\wt V_{2}=\frac1n\sum_{k=0}^{n-1}\CR^{k}(\wt V_{1})-\CR^{k}(\wt V_{2})\le\frac12\frac1n\sum_{k=0}^{n-1}\CR^{k}(V_{0})+o(\CR^{k}(V_{0})).
$$
We recall from Remark~\ref{rem-o1m} that $o(\CR^{k}(V_{0}))$ goes to 0 as $k\to\8$. 
Taking the limit on the right hand side along the subsequence which converges to $\wt V_{2}$ we get 
$$
\wt V_{1}-\wt V_{2}\le \frac12\wt V_{2} + o(V_0),
$$
which is equivalent to $\frac23\wt V_{1}\le \wt V_{2} + o(V_0)$. Exchanging $\wt V_{1}$ and $\wt V_{2}$ we also get $\frac23\wt V_{2}\le \wt V_{1} + o(V_0)$.
These two inequalities yield 
$$
\wt V_{1}-\wt V_{2}\le \frac13 V_{0}+o(V_{0})
\quad \text { and } \quad
\wt V_{2}-\wt V_{1}\le \frac13 V_{0}+o(V_{0}).
$$
Again, applying $\CR^{k}$ on these inequalities and the Cesaro mean, we get 
$$
\wt V_{1}-\wt V_{2} \le \frac13\wt V_{2} + o(V_0) 
\quad \text{ and } \quad 
\wt V_{2}-\wt V_{1}\le \frac13\wt V_{1} + o(V_0).
$$
Iterating this process, we get that for every integer $p$, 
$$\frac{p}{p+1}\wt V_{2} + o(V_0) \le \wt V_{1}\le \frac{p+1}p\wt V_{2} + o(V_0).$$
This proves $\wt V_{1} - \wt V_{2} = o(V_0)$, \ie
$(\wt V_{1} - \wt V_{2})(x) \to 0$ faster than $V_0(x)$ as $x \to \K$ (see again Remark~\ref{rem-o1m}).
But  $\wt V_{1} - \wt V_{2}$
is also fixed by $\CR$, so we can apply \eqref{equ-riemansum-upper}
with a factor $o(V_0)$ in front. This shows that  $\wt V_{1} = \wt V_{2}$,
and hence the convergence of the Cesaro mean 
$(\frac1n\sum_{k=0}^{n-1}\CR^{k}(V_{0}))_n$. 
This finishes the proof of Theorem~\ref{theo-vtilde}.
\end{proof}

%%%%%%%%%%%%%%%%%%%%%%%%%%%%%
\subsection{More results on fixed points of $\CR$}

The same proof also proves a more general result:
\begin{proposition}
\label{prop-renorm-power}
Let $a$ be a real positive number. Take $V(x) = \frac1{n^{a}}+o(\frac1{n^{a}})$ if $d(x,\K) = 2^{-n}$.
Then, for $a>1$, $ \lim_{n \to \infty} \CR^nV\equiv 0$ and for $a<1$, 
$\lim_{n \to \infty} \CR^nV\equiv\8$.
\end{proposition}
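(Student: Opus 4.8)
The plan is to run the proof of Theorem~\ref{theo-vtilde} with the power potential in place of $V_0$, carrying the exponent $a$ through the Riemann-sum estimate. Recall that $\CR^nV=S_{2^n}V\circ H^n$. Fix $x\in\S\setminus\K$, let $m=-\log_2 d(x,\K)$ (we assume $m\ge 3$, as in Theorem~\ref{theo-vtilde}; on $\K$ one has $\CR^nV\equiv0$ for every $n$), and pick $x_K\in\K$ coinciding with $x$ in exactly the first $m$ digits, so $d(H^nx,\K)=d(H^nx,H^nx_K)=2^{-2^nm}$ by~\eqref{eq:double}. For $0\le k<2^n$ write $d(\s^kH^nx,\K)=2^{-n_k}$. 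Since $\s^kH^nx$ agrees with $\s^kH^nx_K\in\K$ in at least $2^nm-k$ digits, $n_k\ge 2^nm-k$ always, with equality as long as no accident has yet occurred (before the first accident the distance to $\K$ doubles at each shift). By Remark~\ref{rem-pointproche}, $n_k\ge 2^n(m-1)$ for every $k<2^n$, so $n_k\to\infty$ uniformly in $k$; hence for all large $n$ every point $\s^kH^nx$ lies where $V>0$ and where $\tfrac12n_k^{-a}\le V(\s^kH^nx)\le 2n_k^{-a}$. This uniform two-sided control (valid even though $V$ need not be globally nonnegative, precisely because of Remark~\ref{rem-pointproche}) is the one input that needs more than a verbatim copy of the proof of Theorem~\ref{theo-vtilde}.

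\textbf{Case $a>1$.} Using $V(\s^kH^nx)\le 2n_k^{-a}\le 2(2^nm-k)^{-a}$,
\[
0\le(\CR^nV)(x)\le 2\sum_{k=0}^{2^n-1}\frac1{(2^nm-k)^a}
=2\cdot 2^{-n(a-1)}\cdot\frac1{2^n}\sum_{k=0}^{2^n-1}\frac1{(m-k2^{-n})^a}
\le 2\cdot 2^{-n(a-1)}(m-1)^{-a},
\]
the last inequality because each summand satisfies $(m-k2^{-n})^{-a}\le(m-1)^{-a}$ for $0\le k<2^n$. Since $m\ge 2$ this gives $(\CR^nV)(x)\le 2\cdot 2^{-n(a-1)}$, and as $a-1>0$ we obtain $\CR^nV\to 0$ uniformly on $\S$.

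\textbf{Case $a<1$.} By the last item of Proposition~\ref{prop-time-accident}, applied to $H^nx$ (for which $-\log_2 d(H^nx,\K)=2^nm$), no accident occurs before time $2^{n-1}$; hence $n_k=2^nm-k$ exactly for $0\le k<2^{n-1}$, and $V(\s^kH^nx)\ge\tfrac12(2^nm-k)^{-a}$ for $n$ large. Discarding the remaining terms, which are nonnegative for $n$ large,
\[
(\CR^nV)(x)\ \ge\ \frac12\sum_{k=0}^{2^{n-1}-1}\frac1{(2^nm-k)^a}
\ \ge\ \frac12\cdot 2^{n-1}\cdot\frac1{(2^nm)^a}
\ =\ \frac1{4m^a}\,2^{n(1-a)}\ \longrightarrow\ \infty\quad(n\to\infty),
\]
for every $x\notin\K$; the convergence is uniform on each set $\{\,d(\cdot,\K)\ge 2^{-M}\,\}$.

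I do not expect a genuine obstacle: everything reduces to replacing $\int_0^1(m-t)^{-1}\,dt=\log\tfrac m{m-1}$ by crude bounds on $\sum_k(2^nm-k)^{-a}$ and reading off the sign of $a-1$, while the two combinatorial inputs --- ``no accident before time $2^{n-1}$'' (Proposition~\ref{prop-time-accident}) and ``all iterates $\s^kH^nx$, $k<2^n$, stay super-exponentially close to $\K$'' (Remark~\ref{rem-pointproche}) --- are exactly those already used for Theorem~\ref{theo-vtilde}. The only point deserving an explicit sentence in the write-up is the uniformity (in $x$, $k$, $m$) of the estimates $\tfrac12n_k^{-a}\le V(\s^kH^nx)\le 2n_k^{-a}$, which holds because $n_k\ge 2^n(m-1)\to\infty$ independently of $x$.
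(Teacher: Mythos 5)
Your proof is correct and follows the same route the paper indicates: the paper's proof is the one-line remark that the Riemann sum from \eqref{equ-riemansum-upper} acquires a prefactor $2^{n(1-a)}$, and you simply carry this out explicitly, using Remark~\ref{rem-pointproche} to justify the uniform two-sided control $\tfrac12 n_k^{-a}\le V(\s^kH^nx)\le 2n_k^{-a}$ and Proposition~\ref{prop-time-accident} to keep half the sum free of accidents for the lower bound. (In fact, the accident-arithmetic of Proposition~\ref{prop-time-accident} shows that for $m\ge 3$ there is no accident at all among $\s^kH^nx$, $0\le k<2^n$, so even the full sum is available; but the weaker $j\ge 2^{n-1}$ you used is exactly what the paper invokes and it suffices.)
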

\begin{proof}
Immediate, since the Riemann sum as in \eqref{equ-riemansum-upper}
has a factor $2^{n(1-a)}$ in front of it. 
\end{proof}
Consequently, any $V$ satisfying
$V(x) = \frac1{n}+o(\frac1{n})\mbox{ for }d(x,\K) = 2^{-n}$
belongs to the weak stable set  $V\in \CW^{s}(\wt V)$ of the fixed 
potential $\wt V$ from Theorem~\ref{theo-vtilde}.
However, $\CW^{s}(\wt V)$ is in fact much larger:

\begin{proposition}
If $V(x) = \frac1n g(x)$ for $d(x, \K) = 2^{-n}$ and $g:\S \to \R$ a continuous function, then $\frac1j \sum_{k=0}^{j-1} \CR^k(V) \to \wt V \cdot \int_{\K} g\ d\mu_{\K}$.
\end{proposition}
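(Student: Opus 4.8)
The plan is to strip off the constant part of $g$ and reduce everything to Theorem~\ref{theo-vtilde} together with unique ergodicity of $(\K,\s,\mu_\K)$. Write $c:=\int_\K g\,d\mu_\K$ and $h:=g-c$, so $h$ is continuous with $\int_\K h\,d\mu_\K=0$, and let $V_0(x):=\frac1n$ when $d(x,\K)=2^{-n}$ (and $V_0\equiv0$ on $\K$); this $V_0$ is continuous and lies in $\CH$, so $\frac1j\sum_{k=0}^{j-1}\CR^kV_0\to\wt V$ by Theorem~\ref{theo-vtilde}. Since $V=V_0\cdot g=cV_0+V_0h$ pointwise and $\CR$ is linear, the statement follows once we know that $\CR^n(V_0h)\to0$ uniformly on $\S$: the Cesàro averages of $\CR^k(V_0h)$ then vanish and
$$
\frac1j\sum_{k=0}^{j-1}\CR^kV
= c\cdot\frac1j\sum_{k=0}^{j-1}\CR^kV_0 + \frac1j\sum_{k=0}^{j-1}\CR^k(V_0h)
\ \longrightarrow\ c\,\wt V \ =\ \wt V\int_\K g\,d\mu_\K .
$$

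The first step is to replace the orbit of $H^nx$ by an honest orbit inside $\K$. Fix $x\notin\K$, write $d(x,\K)=2^{-m}$ (so $m\ge2$), let $x_K\in\K$ realize this distance, and set $w:=H^nx_K\in\K$. Since $d(H^nx,H^nx_K)=d(x,x_K)^{2^n}=2^{-2^nm}$, the sequence $H^nx$ agrees with $w$ on $2^nm$ symbols, so for $0\le j<2^n$ the point $\s^jH^nx$ agrees with $\s^jw\in\K$ on at least $2^nm-j\ge 2^n(m-1)\ge 2^n$ symbols; in particular (Remark~\ref{rem-pointproche}) all these points lie within $2^{-2^n}$ of $\K$. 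Writing $a_j:=V_0(\s^jH^nx)\ge0$ and using $\CR^nW=S_{2^n}W\circ H^n$, we get $\CR^n(V_0h)(x)=\sum_{j=0}^{2^n-1}a_j\,h(\s^jH^nx)$, and replacing every $h(\s^jH^nx)$ by $h(\s^jw)$ costs at most $\omega_h(2^{-2^n})\sum_{j<2^n}a_j$, where $\omega_h$ is the modulus of continuity of $h$. As $\sum_{j<2^n}a_j=\CR^nV_0(x)<\frac1{m-1}\le1$ (cf.\ \eqref{equ-riemansum-upper}), this error is uniformly $o(1)$, and it remains to show $T_n(x):=\sum_{j=0}^{2^n-1}a_j\,h(\s^jw)\to0$ uniformly in $x$.

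The second step combines a regularity estimate for the weights $a_j$ with unique ergodicity. Put $D_j:=-\log_2 d(\s^jH^nx,\K)$, so $a_j=1/D_j$; the bound $d(\s y,\K)\le 2d(y,\K)$ gives $D_{j+1}\ge D_j-1$, hence $D_j\ge D_0-j\ge 2^nm-j\ge 2^n(m-1)$, so $0\le a_j\le 2^{-n}$ and $a_{j+1}-a_j\le\frac1{D_j-1}-\frac1{D_j}\le 2^{-2n+1}$. Thus the positive variation of $(a_j)_{j<2^n}$ is $\le 2^{-n+1}$, and consequently $\sum_j|a_{j+1}-a_j|=O(2^{-n})$, with a constant independent of $x$ and $m$. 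On the other hand, unique ergodicity of $(\K,\s,\mu_\K)$ and $\int_\K h\,d\mu_\K=0$ yield $\beta_\ell:=\sup_{w\in\K}\frac1\ell|S_\ell h(w)|\to0$ (and $\beta_\ell\le\|h\|_\infty$). Abel summation,
$$
T_n(x)=a_{2^n-1}\,S_{2^n}h(w)+\sum_{j=1}^{2^n-1}(a_{j-1}-a_j)\,S_jh(w),
$$
bounds the first term by $2^{-n}\cdot\beta_{2^n}\cdot2^n=\beta_{2^n}$, and, splitting the sum at a fixed level $L$, bounds the rest by $L\|h\|_\infty\sum_j|a_{j-1}-a_j|+\big(\sup_{\ell>L}\beta_\ell\big)\,2^n\sum_j|a_{j-1}-a_j|$. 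Using $\sum_j|a_{j-1}-a_j|=O(2^{-n})$, letting $n\to\infty$ and then $L\to\infty$ gives $\sup_x|T_n(x)|\to0$, hence $\CR^n(V_0h)\to0$ uniformly.

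The hard part is the last step. The weights $a_j$ carry the non-monotone, ``accident''-driven behaviour of $j\mapsto d(\s^jH^nx,\K)$ analysed in Section~\ref{subsec-genresult-TM}, and the unweighted sums $\sum_{j<2^n}a_j$ do not converge (only their Cesàro means do, by Theorem~\ref{theo-vtilde}); what makes the weighted Birkhoff sum $T_n(x)$ still vanish is that the cancellation encoded in $\int_\K h\,d\mu_\K=0$ survives the weighting, precisely because $(a_j)$ has total variation $O(2^{-n})$ while the plain ergodic sums $S_\ell h$ are $o(\ell)$ uniformly on $\K$. A secondary point is that the estimates must be uniform in $x$, i.e.\ over all scales $m\ge2$ simultaneously; but every bound above is expressed through $2^{-n}$ or $\omega_h(2^{-2^n})$ with constants free of $m$, so this causes no real difficulty.
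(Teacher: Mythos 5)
Your proof is correct, but it takes a genuinely different and in fact more modular route than the paper's. The paper works directly with $(\CR^k V)(x)$: it splits the index $j \in \{0,\dots,2^k-1\}$ into blocks of length $2^s$, uses the near-constancy of the weight $1/(m-j2^{-k})$ on each block together with uniform convergence of Birkhoff averages of $g$ on $\K$, and arrives at $\limsup_k (\CR^kV)(x)\le \frac1m\int_\K g\,d\mu_\K + o(\frac1m)$ and a matching $\liminf$ lower bound (with the factor $\frac{1}{2m}$); it then re-runs the Step~3 argument from the proof of Theorem~\ref{theo-vtilde} to upgrade these two-sided bounds to Ces\`aro convergence. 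You instead peel off the constant $c=\int_\K g\,d\mu_\K$, quote Theorem~\ref{theo-vtilde} verbatim for $cV_0$, and prove that the remainder $\CR^n(V_0h)$ (with $h=g-c$ of zero mean on $\K$) goes to zero \emph{uniformly}. The key new ingredient is quantitative: after replacing the weights $a_j=V_0(\s^jH^nx)$ by a summation along an honest orbit $(\s^jw)_{j}$ in $\K$ (at a cost controlled by $\omega_h(2^{-2^n})$ and $\sum_j a_j\le\log\frac{m}{m-1}\le\log 2$), you observe that the inequality $D_{j+1}\ge D_j-1$ forces the positive variation of $(a_j)_{j<2^n}$ to be $O(2^{-n})$, and hence by telescoping the full total variation is $O(2^{-n})$; Abel summation against the uniformly small Birkhoff sums $S_\ell h$ on $\K$ then kills the weighted sum. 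What this approach buys is a cleaner separation of the two mechanisms at work (the Riemann-sum/accident analysis, entirely encapsulated in Theorem~\ref{theo-vtilde} for $V_0$, versus the equidistribution of $g$ on $\K$, handled once by the variation bound), avoids repeating the Step~3 machinery, and yields the stronger conclusion that the zero-mean part converges uniformly — not only in Ces\`aro mean — which the paper's $\limsup$/$\liminf$ sandwich does not immediately give. The bounds are indeed uniform in $x$ (and hence in $m\ge 2$), as all estimates are in terms of $2^{-n}$, $\omega_h(2^{-2^n})$ and the uniform ergodic rates $\beta_\ell$ on $\K$.
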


\begin{proof}
Take $\eps > 0$ arbitrary, and take $r \in \N$ so large that
$\sup |g| 2^{-r} \le \eps$ and
if $d(x, \K) = d(x, x_{\K}) \le 2^{-r}$, then $|g(x) - g(x_{\K})| \le \eps$.
Next take $k \in \N$ so large that if $k = r+s$, then 
$$
\left| \frac{1}{2^s}\sum_{i=0}^{2^s-1} g(\s^i(y)) - 
\int g\ d\mu_{\K} \right| \le \eps.
$$
uniformly over $y \in \K$.
Then we can estimate
\begin{eqnarray*}
(\CR^k V)(x) &=& \sum_{j=0}^{2^k-1} V \circ \s^j \circ H^k(x) \\
 &\le& \frac{1}{2^k} \sum_{j=0}^{2^k-1} \frac{1}{m-\frac{j}{2^k}} 
g \circ \s^j \circ H^k(x) \\
&=& \frac{1}{2^r} \sum_{t=0}^{2^r-1}\frac{1}{2^s} \sum_{i=0}^{2^s-1} 
\frac{1}{m-\frac{1}{2^k} (2^st+i)} 
g \circ \s^{2^st+i} \circ H^k(x) \\
&=& \frac{1}{2^r} \sum_{t=0}^{2^r-2}\frac{1}{2^s} \sum_{i=0}^{2^s-1} 
\left( \frac{1}{m-\frac{t}{2^r}} + O(2^{-r}) \right) \cdot \int_{\K} \left( g\ d\mu_{\K} + O(\eps) \right) \\
&& \ + \ \frac1{2^r} \frac{1}{2^s}\sum_{j=0}^{2^s-1} \frac{1}{m-\frac{1}{2^k} (2^k-2^s+j)} 
\sup|g|\\
&\to& \int_0^1 \frac{1}{m-x} dx \cdot \int_{\K} g\ d\mu_{\K} + O(3 \eps).
\end{eqnarray*}
Since $\eps$ is arbitrary, we find
$\limsup_k (\CR^kV)(x) \le \frac1m \cdot \int_{\K} g\ d\mu_{\K} + o(\frac1m)$.
Similar to Step 2 in the proof of Theorem~\ref{theo-vtilde},
we find
$\limsup_k (\CR^kV)(x) \ge \frac1{2m} \cdot \int_{\K} g\ d\mu_{\K} 
+ o(\frac1m)$.
From this, using the argument of Step 3 in the proof of 
Theorem~\ref{theo-vtilde}, we conclude that for the Cesaro means,
$\lim_n \frac1n \sum_{k=0}^{n-1}(\CR^kV)(x) = 
\wt V(x) \cdot \int_{\K} g\ d\mu_{\K}$.
\end{proof}

%%%%%%%%%%%%%%%%%%%%%%%%%%%%%%%%
\subsection{Unbounded fixed points of $\CR$}\label{subsec-unbounded1}

The application to Feigenbaum maps discussed in the Appendix of this paper suggests 
the existence of unbounded fixed points $V_u$ of $\CR$ as well.
They can actually be constructed explicitly using the
disjoint decomposition
$$
\S \setminus \s^{-1}\{ \rho_0, \rho_1 \} = \sqcup_{k \ge 0} \left( (\s \circ H)^k(\S) 
\setminus(\s \circ H)^{k+1}(\S) \right).
$$
If we set 
\begin{equation}\label{eq:Vu}
V_u|_{H(\S)} = g \quad \text{ and }\quad V_u(x) = V_u(y) - 
V_u \circ H(y)  \quad \text{ for } x = \s \circ H(y),
\end{equation}
then $V_u$ is well-defined and $\CR V_u = V_u$ on $\S \setminus \s^{-1}\{ \rho_0, \rho_1 \}$.
The simplest example is
\begin{equation}\label{eq_exampleVu}
V_u|_{ (\s \circ H)^k(\S) \setminus(\s \circ H^{k+1})(\S) } = (1-k)\alpha,
\end{equation}
and we will explore this further for phase transitions in 
Section~\ref{sec:thermo}.

For $x \in \S \setminus \s \circ H(\S)$ and 
$x^k = (\s \circ H)^k(x)$, we have
$$
V_u(x^k) = g(x) - \sum_{j=1}^k g \circ \s^{2^j-2} \circ H^j(x).
$$
Now for $x \in [1]$
$$
\s^{2^j-2} \circ H^j(x) \to \left\{ \begin{array}{ll}
\sigma^{-2}(\rho_0) & \text{ along odd $j$'s,}\\
\sigma^{-2}(\rho_1) & \text{ along even $j$'s,}
\end{array}\right. 
$$
and the reverse formula holds for $x \in [0]$.
In either case, $V(x^k) \sim \frac{k}{2} [g \circ \s^{-2}(\rho_0) + g \circ \s^{-2}(\rho_1)]$.
Therefore, unless
$g \circ \s^{-2}(\rho_0) + g \circ \s^{-2}(\rho_1) = 0$, 
the potential $V_u$ is unbounded near 
$\lim_{k\to\infty} (\s \circ H)^k(x) = \{\s^{-1}(\rho_0)\ , \ \s^{-1}(\rho_1)\}$, cf.\ Lemma~\ref{lem-sigmacirch}.

\begin{remark}
A variation of this stems from the decomposition
$$
\S \setminus \{ \rho_0, \rho_1 \} = \sqcup_{k \ge 0} \left(H^k(\S) \setminus H^{k+1}(\S) \right).
$$
In this case, if we define
$$
V'_u|_{\s \circ H(\S)} = g \quad \text{ and }\quad V'_u(x) = V'_u(y) - 
V'_u \circ \s(x)  \quad \text{ for } x = H(y),
$$
then $V'_u = \CR V'_u$ on $\S \setminus \{ \rho_0, \rho_1 \}$.
$\hfill \blacksquare$
\end{remark}

%%%%%%%%%%%%%%%%%%%%%%%%%
\section{Thermodynamic formalism}\label{sec:thermo}
%%%%%%%%%%%%%%%%%%%%%%%%%%%%%%%%%%
In this section we prove Theorems~\ref{theo-thermo-a<1}, \ref{theo-super-MP} and \ref{theo-thermo-Vu}. 
In the first subsection we define an induced transfer operator as in \cite{leplaideur1} and use its properties. 
Then we prove both theorems.

%%%%%%%%%%%%%%%%%%%%%%%
\subsection{General results and a key proposition}

Let $V:\S \to \R$ be some potential function, and let $J$ be any cylinder 
such that on it, the distance to $\K$ is constant, say $\delta_{J}$.
Consider the first return map
$T:J \to J$, say with return time $\tau(x) = \min\{ n \ge 1 : \s^n(x) \in J\}$, so $T(x) = \s^{\tau(x)}(x)$. The sequence of successive return times is then denoted by $\tau^k(x)$, $k=1,2,\ldots$
The transfer operator is defined as
\begin{equation}\label{eq:L}
(\CL_{z, \gamma} g)(x) = \sum_{T(y) = x} e^{\Phi_{z,\gamma}(y)} g(y)
\end{equation}
where $\Phi_{z,\gamma}(y): = -\gamma (S_n V)(y) - n z$ if $\tau(y) = n$. For a given test function $g$ and a point $x\in J$,  $(\CL_{z, \gamma} g)(x)$ is thus a power series in $e^{-z}$. 

These operators extend the usual transfer operator. They were introduced in \cite{leplaideur1} and allow us to define \emph{local equilibrium states}, \ie equilibrium states for the potentials of the form $\Phi_{z,\gamma}$ and the dynamical system $(J,T)$. These local equilibrium states are later denoted by $\nu_{z,\gamma}$.

We emphasize that, using induction on $J$, these operators $\CL_{z, \gamma}$  
 allow us to construct equilibrium states for potentials which do not necessarily satisfy 
the Bowen condition (such as \eg the Hofbauer potential). 

Nevertheless, we need the following {\em local Bowen condition}:
there exists $C_V$ (possibly depending on $J$) such that 
\begin{equation}\label{eq:Bowen}
|(S_nV)(x)-(S_nV)(y)|\le C_{V},
\end{equation}
whenever $x, y \in J$ coincide for $n:=\tau^{k}(x)=\tau^{k}(y)$ indices. 
This holds, \eg if $V(x)$ depends only on the distance 
between $x$ and $\K$. 

\begin{lemma}\label{lem-zc}
Let $x \in J$ and let $\gamma$ and $z$ be such that $(\CL_{z,\gamma}\BBone_{J})(x) < \infty$. Then $(\CL_{z,\gamma}g)(y)<\infty$ for every $y \in J$  and for every continuous function $g:J\rightarrow\R$. 
\end{lemma}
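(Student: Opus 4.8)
The plan is to dominate the series defining $(\CL_{z,\gamma}g)(y)$ term by term by a constant multiple of the series defining $(\CL_{z,\gamma}\BBone_{J})(x)$, the constant coming from the local Bowen condition \eqref{eq:Bowen} and from the compactness of $J$.

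First I would make the branch structure of the first-return map $T$ explicit. Since we work in the full shift, a word $w$ determines a branch precisely when it is a \emph{first-return word} of $J$, and then for \emph{every} $x'\in J$ the concatenation $wx'$ lies in $J$, has $\tau(wx')=|w|$ and satisfies $T(wx')=x'$; conversely every $T$-preimage of $x'$ arises this way. Thus the indexing set $\CW$ of first-return words is the same for all $x'\in J$, and
\[
(\CL_{z,\gamma}g)(x') \;=\; \sum_{w\in\CW} e^{-\gamma (S_{|w|}V)(wx') - |w|z}\, g(wx') .
\]
Next, fixing $w\in\CW$ with $n=|w|$: the points $wx$ and $wy$ lie in $J$, agree on their first $n$ symbols, and have $n=\tau(wx)=\tau(wy)$, so \eqref{eq:Bowen} (applied with $k=1$) yields $|(S_nV)(wx)-(S_nV)(wy)|\le C_V$, whence
\[
\bigl|\,e^{-\gamma (S_nV)(wy)-nz}\,\bigr| \;\le\; e^{|\gamma|C_V}\,\bigl|\,e^{-\gamma (S_nV)(wx)-nz}\,\bigr| .
\]
Finally, since $J$ is a cylinder it is a compact subset of $\S$, so the continuous function $g$ is bounded on $J$, say $|g|\le M$.

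Combining these and summing over $w\in\CW$ gives
\[
\bigl|(\CL_{z,\gamma}g)(y)\bigr| \;\le\; M\,e^{|\gamma|C_V}\sum_{w\in\CW}\bigl|\,e^{-\gamma (S_{|w|}V)(wx)-|w|z}\,\bigr| \;=\; M\,e^{|\gamma|C_V}\,(\CL_{z,\gamma}\BBone_{J})(x)\;<\;\infty ,
\]
which would prove the lemma; if $z$ is taken complex, the same estimates apply to the moduli of the terms of the power series in $e^{-z}$, and ``$<\infty$'' is interpreted as absolute convergence. I do not expect a genuine obstacle here: the only points requiring care are verifying that the set $\CW$ of branches of $T$ is independent of the base point (immediate for a cylinder in the full shift) and invoking \eqref{eq:Bowen} with the matching return-time index $n=\tau(wx)=\tau(wy)$, rather than with an arbitrary common length of agreement.
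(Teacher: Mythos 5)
Your proof is correct and is essentially the paper's argument: both rely on pairing the $T$-preimages of $x$ and $y$ branch by branch, using the local Bowen condition \eqref{eq:Bowen} to bound the ratio of the summands by $e^{\gamma C_V}$, and then invoking compactness of the cylinder $J$ to bound the continuous test function $g$. The only difference is that you spell out, more explicitly than the paper does, why the indexing set of first-return words $\CW$ is the same for every base point in the cylinder $J$ inside the full shift; the paper just asserts the term-by-term comparability. That extra care is harmless and correct.
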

\begin{proof}
Note that for any $x, y \in J$, 
$(\CL_{z,\gamma}\BBone_{J})(x)\approx e^{\pm C_{V}}(\CL_{z,\gamma}\BBone_{J})(y)$. Indeed, if $x'$ and $y'$ are two preimages of $x$ and $y$ in $J$, with the same return time $n$ and such that for every $k\in\llb 0,n\rrb$ $\s^{k}(x')$ and $\s^{k}(y')$ are in the same cylinder, then
$$
|(S_nV)(x')-(S_nV)(y')|\le C_{V}.
$$
Recall that $J$ is compact, and that every continuous function $g$ on $J$ is bounded. Hence convergence (\ie as power series) of 
$(\CL_{z,\gamma}\BBone_{J})(x)$ 
ensures uniform convergence over $y \in J$ for any continuous $g$. 
This finishes the proof of the lemma.
\end{proof}

For fixed $\gamma$, there is a critical $z_c$ such that $(\CL_{z, \gamma}\BBone_{J})(x)$
converges for all $z > z_c$ and $z_{c}$ is the smallest real number with this property. Lemma~\ref{lem-zc} shows that $z_{c}$ is independent of $x$.  The next result is straightforward. 

\begin{lemma}\label{lem-decreaz-zlambda}
The spectral radius $\lambda_{z,\gamma, }$ of $\CL_{z,\gamma}$ is decreasing in both $\gamma$ and $z$.
\end{lemma}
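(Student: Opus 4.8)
The statement is a direct consequence of the positivity of $\CL_{z,\gamma}$ together with the pointwise monotonicity of its weights. The plan is to pass from domination of the weight $e^{\Phi_{z,\gamma}(y)}=e^{-\gamma(S_nV)(y)-nz}$ (with $n=\tau(y)$) to domination of the spectral radii, by comparing the iterates $\CL_{z,\gamma}^n\BBone_J$.

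First I would fix $\gamma$ and compare parameters $z\le z'$. Since every return time satisfies $\tau(y)\ge 1$, we have $e^{\Phi_{z',\gamma}(y)}\le e^{\Phi_{z,\gamma}(y)}$ for all $y$, so $(\CL_{z',\gamma}g)\le(\CL_{z,\gamma}g)$ pointwise on $J$ for every $g\ge 0$; iterating positive operators, $(\CL_{z',\gamma}^{n}\BBone_J)(x)\le(\CL_{z,\gamma}^{n}\BBone_J)(x)$ for all $n$ and all $x\in J$. The identical computation with $z$ fixed and $\gamma\le\gamma'$ gives $(\CL_{z,\gamma'}^{n}\BBone_J)(x)\le(\CL_{z,\gamma}^{n}\BBone_J)(x)$, this time using $(S_nV)(y)\ge 0$ along the return orbits, which is the case for the non-negative potentials considered here (\eg $V$ depending only on $d(\cdot,\K)$, as in \eqref{eq:Bowen}).

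Second I would recover the spectral radius as a growth rate. By the local Bowen condition \eqref{eq:Bowen} and the argument in Lemma~\ref{lem-zc}, the sequence $a_n(x):=(\CL_{z,\gamma}^{n}\BBone_J)(x)$ is submultiplicative up to a multiplicative constant (depending on $z,\gamma$ but not on $n$) and satisfies $a_n(x)\asymp a_n(x')$ uniformly in $n$ for $x,x'\in J$; hence $\lambda_{z,\gamma}=\lim_n a_n(x)^{1/n}$ exists, coincides with the spectral radius, and is independent of $x\in J$ (with the convention $\lambda_{z,\gamma}=+\infty$ when $z<z_c$). Taking $n$-th roots in the inequalities of the previous step and letting $n\to\infty$ gives $\lambda_{z',\gamma}\le\lambda_{z,\gamma}$ and $\lambda_{z,\gamma'}\le\lambda_{z,\gamma}$, which is the assertion.

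There is no real obstacle; the only input beyond elementary positivity is the identification $\lambda_{z,\gamma}=\lim_n a_n(x)^{1/n}$ uniformly in $x$, which is exactly what \eqref{eq:Bowen} and Lemma~\ref{lem-zc} provide. If strict monotonicity is wanted on the region $\{\lambda_{z,\gamma}<\infty\}$, it follows from the same estimates by observing that some return branch has $\tau>1$ (respectively $S_nV>0$ somewhere along it), so that the domination of the weights is strict on a branch carrying positive mass.
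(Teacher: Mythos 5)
The paper gives no proof of this lemma; it is dismissed as ``straightforward.'' Your argument is the natural one and is the only reasonable reading of the authors' intent: each return weight $e^{\Phi_{z,\gamma}(y)}=e^{-\gamma(S_{\tau(y)}V)(y)-\tau(y)z}$ is pointwise monotone in the parameters, positivity of $\CL_{z,\gamma}$ pushes this through to the iterates $\CL_{z,\gamma}^n\BBone_J$, and the local Bowen condition~\eqref{eq:Bowen} (via the argument of Lemma~\ref{lem-zc}) identifies the spectral radius with the growth rate $\lim_n\bigl((\CL_{z,\gamma}^n\BBone_J)(x)\bigr)^{1/n}$ uniformly over $x\in J$. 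The $z$-monotonicity is unconditional since $\tau(y)\ge1$ (in fact $\ge2$ here, since $J$ is disjoint from $\K$).

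The one point worth being more careful about is the $\gamma$-monotonicity, which you correctly tie to the sign of the Birkhoff sums $(S_{\tau(y)}V)(y)$ along return branches. For the potentials $V(x)=n^{-a}+o(n^{-a})$ used in Theorems~\ref{theo-thermo-a>1} and~\ref{theo-thermo-a<1} this is immediate, since $V\ge0$. However, the lemma is stated in the generality used also for the unbounded potential $V_u$ of Theorem~\ref{theo-thermo-Vu}, and for $\alpha<0$ that potential takes negative values on the deep cylinders $(\s\circ H)^k(\S)\setminus(\s\circ H)^{k+1}(\S)$, $k\ge2$. It is therefore not a priori clear that $(S_{\tau(y)}V_u)(y)\ge0$ for every return branch, so your justification ``which is the case for the non-negative potentials considered here'' does not cover that application as stated; you would either need a branchwise estimate (e.g.\ combining the schematic values of $V_u$ near $\K$ with the counting of accidents in Proposition~\ref{prop-time-accident}), or, more simply, observe that Proposition~\ref{prop-equil-presspos} and all subsequent uses of this lemma in the paper only ever invoke the monotonicity in $z$, so the $\gamma$-part is in fact never needed where $V$ may change sign. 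With that caveat made explicit, the proof is correct and in the spirit the authors intended.
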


We are interested in the critical $z_c$ 
and the pressure $\CP(\gamma)$, both as function of $\gamma$. 
Both curves are decreasing (or at least non-increasing).
If the curve $\gamma \mapsto z_c(\gamma)$ avoids the horizontal axis, 
then there is no phase transition: 

\begin{proposition}\label{prop-equil-presspos}
Let $V$ be continuous and satisfying the local Bowen condition \eqref{eq:Bowen} for every cylinder $J$ disjoint and at constant distance from $\K$. Then
the following hold:
\begin{enumerate}
\item[1.] For every $\gamma\ge 0$, the critical $z_{c}(\gamma)\le \CP(\gamma)$.
\item[2.] Assume that the pressure $\CP(\gamma) > -\gamma\int V\,d\mu_{\K}$.
Then there exists a unique equilibrium state for $-\gamma V$ and 
it gives a positive mass to every open set in $\S$. 
Moreover $z_{c}(\gamma)<\CP(\gamma)$ and $\CP(\gamma)$ is analytic on the largest open interval where the assumption holds.
\item[3] If $(\CL_{z,\gamma}\BBone_{J})(\xi)$ diverges for every (or some) $\xi$ and for $z=z_{c}(\gamma)$, then $\CP(\gamma)>z_{c}(\gamma)$ and there is a unique equilibrium state for $-\gamma V$. 
\end{enumerate}
\end{proposition}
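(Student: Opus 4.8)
The plan is to derive all three statements from the Ruelle--Perron--Frobenius (RPF) theory for the induced operators $\CL_{z,\gamma}$ on the first-return system $(J,T)$, as in \cite{leplaideur1}, read off against the position of the curve $\gamma\mapsto z_c(\gamma)$ below $\gamma\mapsto\CP(\gamma)$. The backbone is the inducing (Abramov--Bowen) identity $\CP(\gamma)=\inf\{z\ge z_c(\gamma):\lambda_{z,\gamma}\le1\}$ from \cite{leplaideur1}, together with the soft facts that for $z>z_c(\gamma)$ the operator $\CL_{z,\gamma}$ acts boundedly on a space of locally H\"older functions on the compact set $J$ --- the local Bowen condition \eqref{eq:Bowen} supplying the bounded distortion needed for quasi-compactness --- so that $\lambda_{z,\gamma}$ is a simple leading eigenvalue with positive eigenfunction $h_{z,\gamma}$ and conformal eigenmeasure $m_{z,\gamma}$, the normalised measure $h_{z,\gamma}\,m_{z,\gamma}$ being the local equilibrium state $\nu_{z,\gamma}$; that $(z,\gamma)\mapsto\lambda_{z,\gamma}$ is real-analytic on $\{z>z_c(\gamma)\}$ and, by Lemma~\ref{lem-decreaz-zlambda}, strictly decreasing in each variable with $\lambda_{z,\gamma}\to0$ as $z\to\8$; and that for $z>z_c(\gamma)$ the measure $\nu_{z,\gamma}$ has an exponential return-time tail, hence $\int\tau\,d\nu_{z,\gamma}<\8$.

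\emph{Item 1.} Every $T$-preimage $y$ of a point $x\in J$ with return time $\tau(y)=n$ lies in its own $n$-cylinder $[y_0\dots y_{n-1}]$ of $\S$, so $e^{-\gamma(S_nV)(y)}\le\exp(\sup_{[y_0\dots y_{n-1}]}(-\gamma S_nV))$; summing over all $n$-cylinders bounds the return-time-$n$ part of $(\CL_{z,\gamma}\BBone_{J})(x)$ by the $n$-th partition function $Z_n(-\gamma V)$, whose exponential growth rate is, by definition, $\CP(-\gamma V)=\CP(\gamma)$. Hence $(\CL_{z,\gamma}\BBone_{J})(x)\le\sum_{n\ge1}Z_n(-\gamma V)e^{-zn}<\8$ for every $z>\CP(\gamma)$, so $z_c(\gamma)\le\CP(\gamma)$; Lemma~\ref{lem-zc} makes this independent of $x$.

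\emph{Item 2.} The first step is to observe that $\CP(\gamma)>-\gamma\int V\,d\mu_\K$ says precisely that $\mu_\K$ is not an equilibrium state for $-\gamma V$. An equilibrium state exists (compactness of the invariant measures, upper semicontinuity of entropy, continuity of $V$), and since equilibrium states form a face of the set of invariant measures, no ergodic component of an equilibrium state can equal $\mu_\K$. So one may fix an ergodic equilibrium state $\mu$ that charges $\S\setminus\K$, hence some cylinder $J$ at constant positive distance from $\K$ (take a minimal cylinder forbidden in $\K$, on which the distance to $\K$ is automatically constant). By Kac's formula $\hat\mu:=\mu|_{J}/\mu(J)$ is $T$-invariant with $\int_{J}\tau\,d\hat\mu=1/\mu(J)<\8$, and by the inducing identity it is an equilibrium state for $\Phi_{\CP(\gamma),\gamma}$ of induced pressure $0$, i.e.\ $\lambda_{\CP(\gamma),\gamma}=1$. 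The existence of such a $\hat\mu$ with finite expected return time rules out $\CP(\gamma)=z_c(\gamma)$: at $z=z_c(\gamma)$ the equilibrium state would be forced onto $\K$ (equivalently $-\partial_z\log\lambda_{z,\gamma}=\int\tau\,d\hat\mu$ would diverge there), contradicting that $\mu_\K$ is not an equilibrium state. Thus $z^*:=\CP(\gamma)>z_c(\gamma)$, which places us in the RPF regime at $z^*$: the local equilibrium state $\nu_{z^*,\gamma}$ equals $\hat\mu$, the spectral gap makes it the unique equilibrium state of $\Phi_{z^*,\gamma}$ on $(J,T)$, and its exponential return-time tail lets us spread it over return orbits to a $\s$-invariant probability on $\S$ which is an equilibrium state for $-\gamma V$. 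Uniqueness follows because any ergodic equilibrium state charges some such $J$ and induces to the unique equilibrium state of $\Phi_{z^*,\gamma}$; positivity on every open set follows from the Gibbs property of $m_{z^*,\gamma}$ on $J$ together with the fact that return orbits meet every cylinder of $\S$. Finally, analyticity of $\gamma\mapsto\CP(\gamma)$ on the largest open interval where the hypothesis holds comes from the implicit function theorem applied to $\lambda_{z,\gamma}=1$, using that $(z,\gamma)\mapsto\lambda_{z,\gamma}$ is real-analytic on $\{z>z_c(\gamma)\}$ with $\partial_z\lambda_{z,\gamma}<0$.

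\emph{Item 3.} Here the first-return system $(J,T)$ is a topologically mixing countable Markov shift, and by mixing together with \eqref{eq:Bowen} one has $\lambda_{z,\gamma}\gtrsim(\CL_{z,\gamma}\BBone_{J})(\xi)$. If $(\CL_{z,\gamma}\BBone_{J})(\xi)$ diverges at $z=z_c(\gamma)$, then $\lambda_{z,\gamma}\to\8$ as $z\downarrow z_c(\gamma)$, so by continuity and monotonicity there is a unique $z^*>z_c(\gamma)$ with $\lambda_{z^*,\gamma}=1$; the inducing identity then gives $\CP(\gamma)=z^*>z_c(\gamma)$, and the RPF construction of Item~2 again yields the unique, fully supported equilibrium state for $-\gamma V$. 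The main obstacle throughout is the RPF package for $\CL_{z^*,\gamma}$ at the root of $\lambda_{z,\gamma}=1$ --- a conformal eigenmeasure, a positive eigenfunction bounded away from $0$ and $\8$, a spectral gap, and finite expected return time --- which is the heart of the local thermodynamic formalism of \cite{leplaideur1} and rests on \eqref{eq:Bowen}, together with the step in Item~2 that an induced equilibrium state with $\int\tau\,d\hat\mu<\8$ forces $\CP(\gamma)>z_c(\gamma)$, i.e.\ that no phase transition can sit exactly at $z_c(\gamma)$. Locating $z_c(\gamma)$ and $\CP(\gamma)$ explicitly, as will be needed for Theorems~\ref{theo-thermo-a<1}, \ref{theo-super-MP} and \ref{theo-thermo-Vu}, will in turn require the combinatorial control of excursions near $\K$ from Section~\ref{subsec-genresult-TM} (Lemma~\ref{lem-invertible}, Proposition~\ref{prop-time-accident}, Corollary~\ref{cor-limited-overlap}), but is not needed for the abstract statement here.
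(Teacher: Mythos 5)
Your Item~1 argument is correct but genuinely different from the paper's: you bound the $n$-th block of $(\CL_{z,\gamma}\BBone_J)(x)$ by the $n$-th partition function $Z_n(-\gamma V)$, whereas the paper builds a shift-invariant measure $\wt\nu$ out of periodic points $\xi(y)$ of period $\tau(y)$ and invokes a Katok--Hasselblatt-style entropy bound to get $z_c(\gamma)\le h_{\wt\nu}-\gamma\int V\,d\wt\nu\le\CP(\gamma)$. Your version is cleaner and more self-contained, but the paper's $\wt\nu$ is not a detour: the crucial extra information it carries, namely $\wt\nu(J)=0$ (Remark~\ref{rem-nutilde-J}), is exactly what the paper later uses to get the \emph{strict} inequality $z_c(\gamma)<\CP(\gamma)$ in Item~2. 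By jettisoning $\wt\nu$ in Item~1, you lose that tool, and it shows.

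In Item~2 there is a genuine gap in your derivation of $z_c(\gamma)<\CP(\gamma)$. You argue that if $z_c(\gamma)=\CP(\gamma)$ then ``at $z=z_c(\gamma)$ the equilibrium state would be forced onto $\K$ (equivalently $-\partial_z\log\lambda_{z,\gamma}=\int\tau\,d\hat\mu$ would diverge there).'' Neither half of this is automatic. The power series $(\CL_{z,\gamma}\BBone_J)(x)$ can perfectly well converge at its radius $z_c$, and its $z$-derivative (which controls $\int\tau\,d\nu_{z,\gamma}$) can also converge there --- e.g.\ coefficients of order $n^{-3}$ give both --- so there is no a priori obstruction to an RPF structure with finite expected return time and $\lambda_{z_c,\gamma}=1$, which would mean $z_c(\gamma)=\CP(\gamma)$ with a non-trivial equilibrium state charging $J$. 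The paper closes this gap differently: it uses the $\wt\nu$ from its proof of Item~1, which satisfies $z_c(\gamma)\le h_{\wt\nu}-\gamma\int V\,d\wt\nu$ and $\wt\nu(J)=0$; since the (already constructed and unique) equilibrium state has $\mu_\gamma(J)>0$, $\wt\nu$ is not an equilibrium state, so $h_{\wt\nu}-\gamma\int V\,d\wt\nu<\CP(\gamma)$, giving the strict inequality cleanly. You should either import that construction or replace your heuristic with a genuine proof that at $z=z_c(\gamma)$ the induced system has infinite expected return time. The remainder of your Item~2 (existence via upper semicontinuity, inducing via Abramov/Kac, uniqueness and full support via the Gibbs property and mixing, analyticity via the implicit function theorem once $z_c<\CP$ is known) and your Item~3 follow the paper's lines.
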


\begin{proof}
There necessarily exists an equilibrium state for $-\gamma V$. Indeed, the potential is continuous and the metric entropy is upper semi-continuous. Therefore any accumulation point as $\eps \to 0$ of a family of measures 
$\nu_{\eps}$ satisfying 
$$
h_{\nu_{\eps}}(\s)-\gamma\int V\, d\nu_{\eps}\ge \CP(\gamma)
$$
is an equilibrium state. 

The main argument in the study  of local equilibrium states as in \cite{leplaideur1} is that $z > z_{c}(\gamma)$ (to make the 
transfer operator ``converges'') and that $V$ satisfies the local Bowen property
 \eqref{eq:Bowen}. 
This property is used in several places and in particular, it yields for every $x$ and $y$ in $J$ and for every $n$:
$$
e^{-\gamma C_{V}}\le\frac{(\CL^{n}_{z\gamma}\BBone_{J})(x)}{(\CL^{n}_{z\gamma}\BBone_{J})(y)}\le e^{\gamma C_{V}}.
$$
To prove part 1., recall that  
$$
(\CL_{z,\gamma}\BBone_{J})(x):=\sum_{n=1}^{\8}\left(\sum_{x', T(x')=x,\tau(x)=n}e^{-\gamma (S_nV)(x')}\right)e^{-nz},
$$
which yields that $ z_c = \limsup_{n}\frac1n\log\left(\sum_{x', T(x')=x,\tau(x)=n}e^{-\gamma (S_nV)(x')}\right)$. 
To prove the inequality $z_{c}(\gamma)\le \CP(\gamma)$, 
we copy the proof of Proposition 3.10 in \cite{jacoisa}. Define the measure $\wt\nu$ as follows\label{preuve-def-nutilde-pgamme}: for $x$ in $J$ and for each $T$-preimage $y$ of $x$ there exists a unique $\tau(y)$-periodic point $\xi(y) \in J$, coinciding with $y$ until $\tau(y)$. 
Next we define the measure $\wt\nu_{n}$ as the probability measure proportional to 
$$
\sum_{\xi(y),\tau(y)=n}e^{\Phi_{\CP(\gamma),\gamma}(\xi(y))}\left(\sum_{j=0}^{n-1}\delta_{\s^{j}\xi(y)}\right)=\sum_{\xi(y),\tau(y)=n}e^{-\gamma (S_nV)(\xi(y))-n\CP(\gamma)}\left(\sum_{j=0}^{n-1}\delta_{\s^{j}\xi(y)}\right).
$$
The measure $\wt\nu$ is an accumulation point of $(\wt\nu_{n})_{n \in \N}$. 
It follows from the proof of \cite[ Lemma 20.2.3, page 264]{katok} that
\begin{equation}\label{equ-zc-pgamme-nutilde}
z_{c}(\gamma)\le h_{\wt\nu}(\s)-\gamma\int V\,d\wt\nu\le \CP(\gamma).
\end{equation}
\begin{remark}\label{rem-nutilde-J}
We emphasize that $\wt\nu_{n}(J)=\frac1n$ for each $n$, 
which shows that $\wt\nu(J)=0$.  
$\hfill \blacksquare$
\end{remark}

\medskip Now we prove part 2. 
 Let $\mu_{\gamma}$ be an  ergodic equilibrium state for $-\gamma V$.
The assumption $\CP(\gamma)>-\gamma\int V\,d\mu_{\K}$ means that the unique 
shift-invariant measure on $\K$ cannot be an equilibrium state (since $\s|_{\K}$ has zero entropy). 
Hence $\mu_{\gamma}$ gives positive mass to some cylinder $J$ in $\K^{c}$. 
Thus the conditional measure 
\begin{equation}
\label{eq-meas-open-out}
\nu_{\gamma}(\cdot):=\mu_{\gamma}(\cdot\cap J)/\mu_{\gamma}(J).
\end{equation}
is $T$-invariant (using the above notations). 

We now focus on the convergence (as power series) 
of $(\CL_{z,\gamma}\BBone_{J})(x)$ 
for any $x \in J$ and $z=\CP(\gamma)$. 
The inequality $z_{c}(\gamma)\le \CP(\gamma)$  does not ensure convergence of $(\CL_{z,\gamma} \BBone_{J})(x)$ for $z=\CP(\gamma)$. 
Again, we copy and adapt arguments from \cite[Proposition 3.10]{jacoisa} 
to get that  $(\CL_{z,\gamma}\BBone_{J})(x)$
converges and that 
the $\Phi_{z,\gamma}$-pressure is non-positive for $z=\CP(\gamma)$. 

In the case $z>\CP(\gamma)$, so $z > z_{c}(\gamma)$, we can 
apply the local thermodynamic formalism for $\Phi_{z,\gamma}$. Moreover $z>z_{c}(\gamma)$ means that $\frac{\partial}{\partial z}
(\CL_{z,\gamma}\BBone_{J})(x)$ converges. 
This implies by  \cite[Proposition 6.8]{leplaideur1}  
that there exists a unique equilibrium state $\nu_{z,\gamma}$ on $J$ for $T$ and for the potential $\Phi_{z,\gamma}$, 
and that the expectation $\int_J \tau \ d\nu_{z,\gamma} < \infty$. 
In other words, there exists a shift-invariant 
probability measure $\mu_{z,\gamma}$ such that 
$$
\mu_{z,\gamma}(J)>0, \mbox{ and } \nu_{z,\gamma}(\cdot):=\frac{\mu_{z,\gamma}(\cdot\cap J)}{\mu_{z,\gamma}(J)}.
$$
The equality $ h_{\nu_{z,\gamma}}(T)+\int\Phi_{z,\gamma}\,d\nu_{z,\gamma}=\log\lambda_{z,\gamma}$ (the spectral radius for $\CL_{z,\gamma}$) shows that
$$
h_{\mu_{z,\gamma}}(\s)-\gamma\int V\,d\mu_{z,\gamma}=z+\mu_{z,\gamma}(J)\log\lambda_{z,\gamma}.
$$
As $z > \CP(\gamma)$ we get $\lambda_{z,\gamma}\le 1$. Now the Bowen property of the potential shows that for every $x \in J$ and for every $n \ge1$:
$$(\CL^{n}_{z,\gamma}\BBone_{J})(x)=e^{\gamma C_V}\lambda^{n}_{z,\gamma}.$$
The power series is decreasing in $z$, thus the monotone Lebesgue convergence theorem shows that it converges for $z=\CP(\gamma)$. 
For this value of the parameter $z$, the spectral radius 
$\lambda_{\CP(\gamma),\gamma} \le 1$.
Following \cite{leplaideur1}, there exists a unique local equilibrium state, $\nu_{\CP(\gamma),\gamma}$ with pressure $\log\lambda_{\CP(\gamma),\gamma}\le 1$. This proves that the $\Phi_{\CP(\gamma),\gamma}$-pressure is non-positive.
  
\medskip
Now, we prove that  the $\Phi_{z,\gamma}$-pressure is non-negative for $z=\CP(\gamma)$. Indeed, by Abramov's formula 
\begin{eqnarray*}
0 &=& h_{\mu_{\gamma}}(\s)-\gamma\int V\,d\mu_{\gamma}- \CP(\gamma) \\
&=& \mu_{\gamma}(J)\left(h_{\nu_{\gamma}}(T)-\gamma\int (S_{\tau(x)}V)(x)\,d\nu_{\gamma}(x)-\CP(\gamma)\int \tau\,d\nu_{\gamma}\right),
\end{eqnarray*}
which yields 
$$
h_{\nu_{\gamma}}(T)-\gamma\int S_{\tau(x)}(V)(x)-\CP(\gamma) \tau(x)\,d\nu_{\gamma}(x)\ =\ 0.
$$
%\end{proof}

\bigskip
Finally, as the $\Phi_{\CP(\gamma),\gamma}$-pressure is non-negative and 
non-positive, it is  equal to $0$. It also has a unique equilibrium state which is a Gibbs measure (in $J$ and for the first-return map $T$). As the conditional measure $\nu_{\gamma}$ has zero $\Phi_{\CP(\gamma),\gamma}$-pressure, it is the unique local equilibrium state. 

The local Gibbs property proves that $\nu_{\gamma}$ gives positive
mass to every open set in $J$, and the mixing property shows that the 
global shift-invariant measure $\mu_{\gamma}$ gives positive mass to every open set in $\S$. We can thus copy the argument to show it is uniquely determined on each cylinder which does not intersect $\K$ (here we use the assumption that the potential satisfies \eqref{eq:Bowen} for each cylinder $J$ with empty intersection with $\K$). 

\medskip
It now remains to prove analyticity of the pressure.
Equality~\eqref{equ-zc-pgamme-nutilde} gives
$z_{c}(\gamma)\le h_{\wt\nu}(\s)-\gamma\int V\,d\wt\nu$. 
Remark~\ref{rem-nutilde-J} states that $\wt\nu(J)=0$, and uniqueness of the equilibrium state shows that $\wt\nu$ cannot be this equilibrium state (otherwise we would have $\wt\nu(J)>0$). Hence, $z_{c}(\gamma)$ is strictly less than $\CP(\gamma)$. Then, we use \cite{Hennion-Herve} to get analyticity in each variable $z$ and $\gamma$, and the analytic version of the implicit function theorem (see \cite{Range}) shows that $\CP(\gamma)$ is analytic.

The proof of part 3 is easier. The divergence of $\CL_{z,\gamma}(\BBone_{J})(\xi)$ for some $\xi$ and $z=z_{c}(\gamma)$ ensures the divergence for every $\xi$, and then Lemma~\ref{lem-decreaz-zlambda} and the local Bowen condition show that $\lambda_{z,\gamma}$ goes to $\8$ as $z$ goes to $z_{c}(\gamma)$. This means that there exists a unique $Z>z_{c}(\gamma)$ such that $\lambda_{Z,\gamma}=1$. Using the work done in the proof of point 2, we let the reader check that necessarily $Z=\CP(\gamma)$ and the local equilibrium state produces a global equilibrium state (see also \cite{leplaideur1}).  

This finishes the proof of the proposition.
\end{proof}

Actually, Proposition~\ref{prop-equil-presspos} says a little bit
more. If the second assumption is satisfied, namely
$\CP(\gamma)>-\int V\,d\mu_{\K}$, then the unique equilibrium
state for $V$ in $\S$ is the measure 
obtained (using Equation \eqref{eq-meas-open-out}) 
from the unique equilibrium state $\nu_{\CP(\gamma),\gamma}$ for the
dynamical 
system $(J,T)$ and associated to the potential $\Phi_{\CP(\gamma),\gamma}$. 
Therefore, two special curves $z$ as function of $\gamma$ appear, see Figure~\ref{fig-geen-red-curve}. The first is $z_{c}(\gamma)$, and the second is $\CP(\gamma)$, defined by the implicit equality 
$$\log\lambda_{\CP(\gamma),\gamma}=0.$$ 
We claim that these curves are convex. 
\begin{figure}[htbp]
\begin{center}
\includegraphics[scale=0.5]{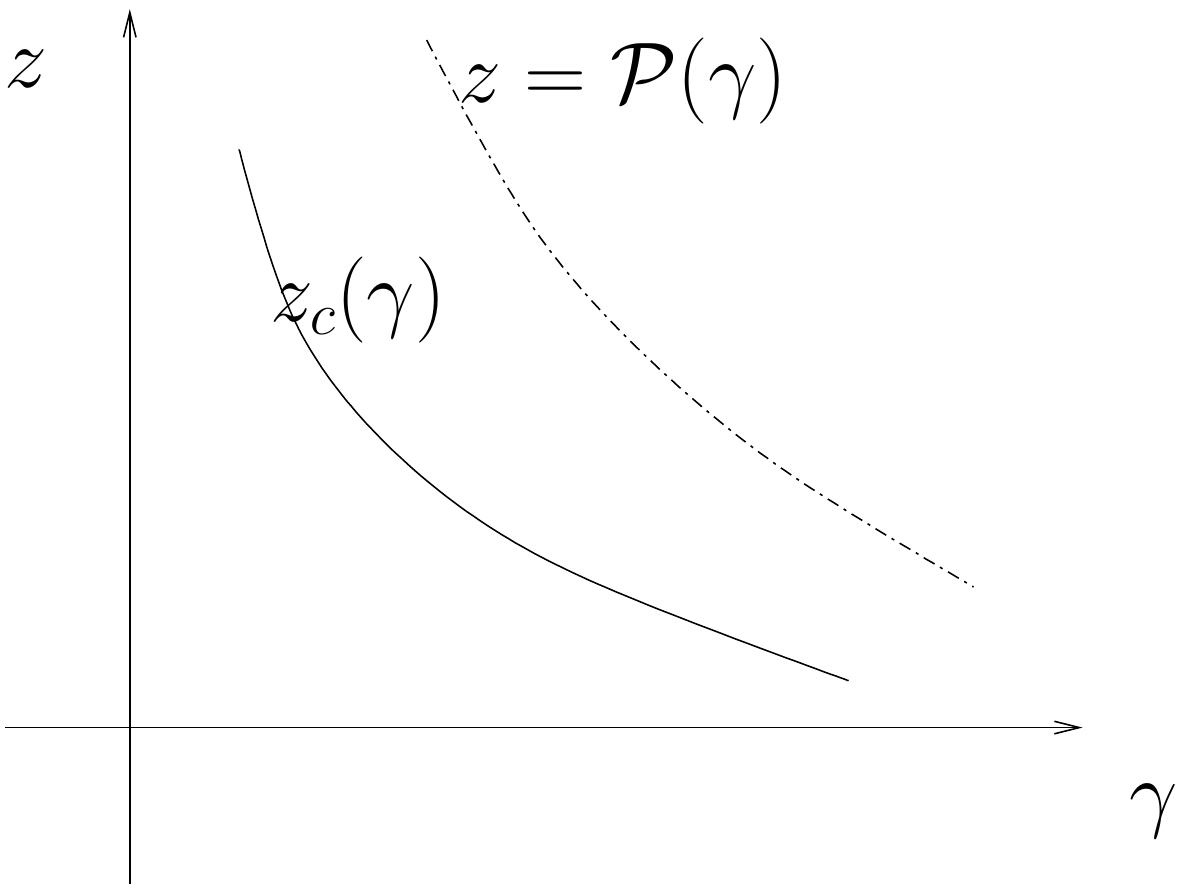}
\caption{Two important values of $z$ as function of $\gamma$.}
\label{fig-geen-red-curve}
\end{center}
\end{figure}

%%%%%%%%%%%%%%%%%%%%%%%%%%%%
\subsection{Counting excursions close to $\K$}\label{subsec:counting}

Let $x\in \S$ and $n\in\N$ be such that for $k\in\llb0,n-1\rrb$, $d(\s^{k}(x),\K)\le 2^{-5}\delta_{J}$. We divide the piece of orbit  $x, \s(x), \dots, \s^{n-1}(x)$
  into pieces between accidents.
We take $b_0 = 0$ by default, and let $y^0 \in \K$ be the point 
closest to $x$. Inductively, set
$$
\begin{array}{rcl}
b_1 &=& \min\{j \ge 1 : d(\s^j(x),\K) \le d(\s^j(x),\s^j(y^0))\}, \\
&& \qquad y^1 \in \K \text{ is point closest to } \s^{b_1}(x). \\[2mm]
b_2 &=& \min\{j \ge 1 : d(\s^{j+b_1}(x),\K) \le d(\s^{j+b_1}(x),\s^{j}(y^1))\},  \\
&& \qquad y^2 \in \K \text{ is point closest to } \s^{b_1+b_2}(x). \\[2mm]
b_3 &=& \min\{j \ge 1 : d(\s^{j+b_1+b_2}(x),\K) \le d(\s^{j+b_1+b_2}(x),\s^j(y^2) \}, \\
&& \qquad y^3 \in \K \text{ is point closest to } \s^{b_1+b_2+b_3}(x) \\[2mm]
\vdots &  & \qquad\qquad \vdots\qquad\qquad \qquad\qquad \vdots
\end{array}
$$
and $d_j = -\log_2 d(\s^{\sum_{i < j} b_i}(x),\K) =
 -\log_2 d(\s^{\sum_{i < j} b_i}(x),y^{j-1})$ 
expresses how close the image of $x$  is to $\K$ at the $j-1$-st accident.

Following Proposition~\ref{prop-time-accident}, $d_{j}-b_{j}$ is of the form $3^{\eps_{j}}2^{k_{j}}$, with $\eps_{j} \in \{0,1\}$ and $d_{j+1}>d_{j}-b_{j}$ by definition of an accident.
One problem we will have to deal with, is to count the possible accidents during a very long piece of orbit: if we know $d_{j}-b_{j}$ can we determine
the possible values of $d_{j}$? 
As it is stated in Subsection~\ref{subsec-genresult-TM}, accidents occur at bispecial words which have to be prefixes of $\tau_{n}\bar\tau_{n}\tau_{n}$ or $\bar\tau_{n}\tau_{n}\bar\tau_{n}$, and are words of the form $\tau_{k}$, $\bar\tau_{k}$, $\tau_{k}\bar\tau_{k}\tau_{k}$ or $\bar\tau_{k}\tau_{k}\bar\tau_{k}$.

\medskip
From now on, we pick some non-negative potential $V$ and assume it satisfies hypotheses of Proposition~\ref{prop-equil-presspos}. 
Namely, our potentials are of the form $V(x)=\frac1{n^{a}}+o(\frac1{n^{a}})$ if $\log_{2}(d(x,\K))=-n$. 
They satisfy hypotheses of Proposition~\ref{prop-equil-presspos}, and furthermore, the Birkhoff sums are locally constant.

Moreover, for $x$ and $y$ in $J$ coinciding until $n=\tau(x)=\tau(y)$, the assumption $d(\K,J)=\delta_{J}=d(x,\K)=d(y,\K)$ shows that for every $j\le n$, 
$$d(\s^{j}(x),\K)=d(\s^{j}(y),\K)$$
holds. Hence $\Phi_{.,\gamma}$ satisfies the local Bowen property 
\eqref{eq:Bowen}.

Let $x$ be a point in $J$. We want to estimate $(\CL_{z,\gamma}\BBone_{J})(x)$.
Let $y$ be a preimage of $x$ for $T$. 
To estimate $\Phi(y)$, we decompose the orbit $y, \s(y), \dots, \s^{\tau(y)-1}(y)$ where $\s^j(y)$ is reasonably far away from $\K$
(let $c_r \ge 0$ be the length of such piece) and {\em excursions} close 
to $\K$.

\begin{definition}
\label{def-excursion}
An excursions begins at $\xi := \s^s(y)$ when $\xi$ starts as 
$\rho_0$ or $\rho_1$ for at least $5-\log_{2}\delta_{J}$ digits
(\ie $d(\xi', \rho_0)$ and $d(\xi', \rho_0) \le\delta_J 2^{-5}$)
and ends at $\xi' := \s^t(y)$ where $t > s$ is the minimal integer such that
$d(\xi', \K) > \delta_J 2^{-5}$.
\end{definition}

If $\s^i(y)$ is very close to $\K$, then due to minimality of $(\K, \s)$
it takes a uniformly bounded (from above) number of iterates for an excursion to begin.

Note that each cylinder for the return map $T$ is characterized by
a {\em path}
$$
c_0, \underbrace{b_{1,1}, b_{1,2}, \dots, b_{1,N_1}}_{\text{\tiny first excursion}}, c_1, 
\underbrace{b_{2,1}, \dots, b_{2,N_1}}_{\text{\tiny second excursion}}, c_2, 
\dots , c_{M-1}, 
\underbrace{b_{M,1}, b_{M,2}, \dots, 
b_{M,N_1}}_{\text{\tiny $M$-th excursion}},
c_M.
$$  
Any piece of orbit between two excursions or before the first excursion or after the last excursion is called a \emph{free path}.
Let $s_r$ and $t_r$ be the times where the $r$-th free path and 
 $r$-th excursion begin. Since $J$ is disjoint from $\K$, each piece
$c_r$ of free path takes at least two iterates, so $c_r \ge 2$
for $0 \le r \le M$.

\medskip
Due to the locally constant potential we are considering, $(\CL_{z,\gamma}\BBone_{J})(x)$  is independent of the point $x$ where it is evaluated. Hence, for the rest of the proofs in this section, unless it is necessary, we shall just write $\CL_{z,\gamma}\BBone_{J}$. 
Our strategy is to glue on together paths in functions of their free-paths and the numbers of accidents during an excursion. This form clusters and the contribution of a cluster considering $N$ accidents is of the form 

\begin{equation}\label{eq:excur_sum}
E_{z,\gamma}(\BBone_{J}) :=
\sum_{N \ge 1} 
\underbrace{\sum_{\stackrel{\text{allowed}}{(b_j)_{j=1}^N,\ (d_j)_{j=1}^N}}
\exp\left(-\gamma \sum_{j=1}^N S_jV \right) \exp\left(-\sum_{j=1}^N b_jz \right)  D_{N}}_{A_N} ,
\end{equation}
where $S_jV$ is the Birkhoff sum of the potential after the $j^{th}$ accident but before the next one and the quantity $D_{N}=\disp e^{\varphi_{N}-(d_{N}-b_{N})z}$ is the contribution of the last part of the orbit after the $N^{th}$ accident. By definition of an accident, this contribution is larger than if there would be no accident.
Therefore  for non-negative $z$, $\frac{e^{-(d_{N}-b_{N})z}}{(d_{N}-b_{N})^{\gamma}} \le D_N \le 1$.
The quantity $A_{N}$ is the sum of the contribution of the cluster with $N$ accident.

Thus we have
\begin{eqnarray}\label{equ2-clzgam-Ezgam}
(\CL_{0,\gamma}\BBone_{J})(x)  &= &  
\left(\sum_{c_0 \ge 5} \sum_{\stackrel{\text{\tiny free}}{c_0-\text{\tiny paths}}}
e^{-\gamma \sum_{n=0}^{c_0-1} V(\s^n(y)) - c_0z} \right) \times \nonumber \\[3mm]
&& \hspace{-1cm} \left(\sum_{M \ge 0} \left[\left(\sum_{ (c_r)_{r=1}^M }
\sum_{\stackrel{\text{\tiny free}}{c_r-\text{\tiny paths}}}
e^{-\gamma \sum_{n=0}^{c_r-1} V(\s^{n+s_r}(y)) - c_rz}\right) \times \left(E_{z,\gamma}(\BBone_{J})\right)^M\right]\right). 
\end{eqnarray}

%%%%%%%%%%%%%%%%%%%%%%%%%%%%%%%%%
\subsection{The potential \boldmath $n^{-a}$: \unboldmath Proofs of Theorems~\ref{theo-thermo-a>1} and \ref{theo-thermo-a<1}
\label{subsec-1/na} }

%%%%%%%%%%%%%%%%
\subsubsection{Proof of Theorem~\ref{theo-thermo-a>1}}
Here we deal with the case $a > 1$ and $V(x) = n^{-a}$ if $d(x, \K) = 2^{-n}$.

\begin{proof}[Proof of Theorem~\ref{theo-thermo-a>1}]
Since $a > 1$, 
$$
\sum_{n=d-b+1}^{d}\frac1{n^{a}}\asymp \int_{d-b}^{d}\frac1{x^{a}}dx
=\frac1{a-1} \left( \frac{1}{(d-b)^{a-1}} - \frac{1}{d^{a-1}} \right)
\le \frac{1}{a-1} < \8,
$$ 
for all values of $b < d$.
To find a lower bound for $E_{z,\gamma}(\BBone_{J})$
in \eqref{eq:excur_sum}, it suffices to take only excursions with a single
accident, and sum over all possible $d_1$ with $d_1-b_1 = 2^k$.
Then 
$$
E_{z,\gamma}(\BBone_{J}) \ge
\sum_k e^{-\gamma/(a-1)} = \8,
$$
regardless of the value of $\gamma> 0$.

By Proposition~\ref{prop-equil-presspos} (part 1) we get $\CP(\gamma) > z_c(\gamma) \ge 0=-\gamma\int V\, d\mu_{\K}$. 
Then Proposition~\ref{prop-equil-presspos} (part 3) 
ensures that there is no phase transition and that $\gamma\mapsto \CP(\gamma)$ is positive and analytic on $[0,\8)$. 

To finish the proof of Theorem~\ref{theo-thermo-a>1},
 we need to compute $\lim_{\gamma\to\8}\CP(\gamma)$. Let $\mu_{\gamma}$ be the unique equilibrium state for $-\gamma V$. 
Then 
$$
\frac{\CP(\gamma)}{\gamma}=\frac{h_{\mu_{\gamma}}}\gamma-\int V\,d\mu_{\gamma},
$$
which yields $\limsup_{\gamma\to\8}\frac{\CP(\gamma)}{\gamma}\le 0$, hence $\limsup_{\gamma\to\8}\CP(\gamma)\le 0$. 
On the other hand, $\CP(\gamma)\ge 0=h_{\mu_{\K}}-\int V\,d\mu_{\K}$, hence $\liminf_{\gamma\to\8}\CP(\gamma)\ge 0$. 
\end{proof}

%%%%%%%%%%%%%%%%
\subsubsection{Proof of Theorem~\ref{theo-thermo-a<1} for a special case.}
 Now 
take $a \in (0,1)$ and
$$
V(x) = n^{-a} \text{ if } d(x,\K) = 2^{-n},
$$
so
$$
\Phi_{z,\gamma}(x) = -\gamma S_n V(x) - n z = -\gamma \sum_{k=1}^n k^{-a} - nz.
$$
The potential is locally constant on sufficiently small
cylinder sets. 
It thus satisfies the local Bowen condition \eqref{eq:Bowen} and 
the hypotheses of Proposition~\ref{prop-equil-presspos} hold.  

Recall that 
$$
\sum_{n=d-b+1}^{d}\frac1{n^{a}}\asymp \int_{d-b}^{d}\frac1{x^{a}}dx=\frac1{1-a}\left(d^{1-a}-(d-b)^{1-a}\right),
$$ 
and we shall replace the discrete sum by the integral. 
The error involved in this can be incorporated in the changed 
coefficient $(1\pm\eps)\gamma$. 

Our goal is to prove that $z_{c}(\gamma)=0$ (for every $\gamma$) and that $\CL_{0,\gamma}(\BBone_{J})(x) \to 0$ as $\gamma \to \8$ (for any $x \in J$). This will prove that there is a phase transition.

\begin{lemma}
\label{lem-diveznegna}
The series $(\CL_{z,\gamma}\BBone_{J})(\xi)$ diverges for $z<0$.
\end{lemma}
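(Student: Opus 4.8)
The plan is to mimic the lower bound used for Theorem~\ref{theo-thermo-a>1}, but now exploit the sign of $z$: for $z<0$ the factor $e^{-nz}=e^{n|z|}$ appearing in \eqref{eq:L} grows \emph{exponentially} in the length $n$ of a return loop, whereas a loop that shadows $\K$ for most of its length accumulates only $S_{n}V\asymp n^{1-a}=o(n)$, since $a\in(0,1)$ and $V\asymp(\text{depth})^{-a}$. Hence along such loops $e^{\Phi_{z,\gamma}(y)}=e^{-\gamma S_{n}V(y)+n|z|}\to\8$, and the power series defining $(\CL_{z,\gamma}\BBone_{J})(\xi)$ diverges already because of a single sub-family of its terms. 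Compared with the case $a>1$ we do not even need boundedness of $S_{n}V$ along the loop, only sub-linearity, which is why the argument works for every $z<0$ and every $\gamma\ge 0$.

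By Lemma~\ref{lem-zc} it suffices to prove divergence at one point $\xi\in J$. Using the obvious analogue of \eqref{equ2-clzgam-Ezgam} for general $z$ and keeping only the term $M=1$ together with one fixed free path on each side (whose total contribution is a strictly positive finite constant, since by minimality of $(\K,\s)$ a free path of bounded length links $J$ to the vicinity of $\K$), it is enough to show that the one-excursion sum $E_{z,\gamma}(\BBone_{J})$ of \eqref{eq:excur_sum} diverges for $z<0$. To bound $E_{z,\gamma}(\BBone_{J})$ from below I discard all excursions except those that enter the $\delta_{J}2^{-5}$-neighbourhood of $\K$ at distance exactly $2^{-d}$ while copying a prefix of $\rho_{0}$; such an excursion exists for all of the infinitely many depths $d$ for which the length-$d$ prefix of $\rho_{0}$ is not right-special (right-special words being scarce, only $O(\log d)$ of length at most $d$). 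For such an excursion: (i) its length $n$ is at least $c\,d$ for an absolute constant $c>0$, because by Proposition~\ref{prop-time-accident} the first accident cannot occur before time $\asymp d$; (ii) its Birkhoff sum satisfies $S_{n}V\le C\,d^{1-a}\log d$, because it is a concatenation of at most $O(\log d)$ ``descent runs'' (consecutive accidents are far apart and bispecial prefixes have length $3^{\eps}2^{k}$, by Proposition~\ref{prop-time-accident} and Corollary~\ref{cor-limited-overlap}), each starting at a depth $\le d$ and therefore contributing at most $\sum_{i\le d}i^{-a}\le \tfrac{C}{1-a}d^{1-a}$. As elsewhere in Section~\ref{subsec-1/na}, the $o(\cdot)$ error in the definition of $V$ is absorbed into a factor $(1\pm\eps)\gamma$.

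Consequently the contribution to $E_{z,\gamma}(\BBone_{J})$ of the depth-$d$ excursion is bounded below by
$$
\exp\!\left(-\,(1+\eps)\gamma\,\tfrac{C}{1-a}\,d^{1-a}\log d\;+\;c\,d\,|z|\right),
$$
which tends to $\8$ as $d\to\8$, since $d^{1-a}\log d=o(d)$ while $c\,d\,|z|$ grows linearly. Summing over the infinitely many admissible $d$ gives $E_{z,\gamma}(\BBone_{J})=\8$, hence $(\CL_{z,\gamma}\BBone_{J})(\xi)=\8$ for every $\xi\in J$.

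The step I expect to be the main obstacle is point (ii): controlling the potential accumulated along an excursion that undergoes several accidents. A crude bound $S_{n}V\le n$ would only yield divergence when $|z|>\gamma$, so one genuinely has to invoke the Thue--Morse combinatorics of Subsection~\ref{subsec-genresult-TM} --- that accidents occur at bispecial words, that consecutive accidents are separated by at least $2^{k}$ steps, and that each re-approach is to a depth no larger than the entry depth --- to keep the total accumulation of order $d^{1-a}\log d=o(d)$. An alternative that sidesteps this bookkeeping is to build the return loops directly: for each large $d$, prescribe the length-$d$ prefix of $\rho_{0}$ as an initial block of the itinerary of a point $y\in J$ and close the loop back into $J$ by a free path of bounded length (again by minimality); then $\s^{j}(y)$ is within $2^{-(d-j)}$ of $\K$ throughout that block, so $S_{\tau(y)}V(y)\le C\,d^{1-a}$ with no accident analysis at all, and $e^{\Phi_{z,\gamma}(y)}\to\8$ as $d\to\8$ by the same computation, the point of evaluation being irrelevant by Lemma~\ref{lem-zc}.
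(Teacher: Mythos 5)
Your ``alternative'' argument at the end of the proposal is essentially the paper's own proof: the paper fixes return loops that copy a prefix of $\K$ of length $d$ with no accidents, observes that $S_{n}V\asymp \frac{1}{1-a}d^{1-a}$ along such a loop with $n\approx d$, and concludes that the individual terms $e^{-\gamma S_{n}V-nz}\ge e^{-C\gamma d^{1-a}+d|z|}$ tend to infinity, so the power series diverges. That version of your argument is correct.

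Your \emph{primary} argument, however, has a real gap in point (ii), which you yourself flag. You claim that each ``descent run'' in an excursion with several accidents starts at a depth no larger than the entry depth $d$, so that $S_{n}V\le C d^{1-a}\log d$. This is not justified: by Definition~\ref{def-accident} and Proposition~\ref{prop-time-accident}, an accident at time $b_{j}$ only gives $d_{j+1}>d_{j}-b_{j}$, and there is no \emph{upper} bound on $d_{j+1}$ in terms of $d_{j}$ --- the orbit may come much \emph{closer} to $\K$ after an accident (it may land near a deeper bispecial word), so the new descent run can start at depth far exceeding $d$. Correspondingly, the return time $n$ can be much larger than $d$, and your bound $S_{n}V\le C d^{1-a}\log d$ no longer follows. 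This is exactly why the paper avoids the accident bookkeeping altogether and restricts to accident-free (or single-accident) loops, which is what your sidestep does; restricting to those loops is also why no $\log d$ factor appears. So keep the alternative as the actual proof, and note that the bound in (i) is fine ($b\ge\frac{2}{5}d$ by the last item of Proposition~\ref{prop-time-accident}), but (ii) as written is not.
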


\begin{proof}
We employ notations from \eqref{eq:excur_sum} with our new $V$. In the
full shift all orbits appear, and we are counting here only orbits which have only one excursion close to $\K$ without accident. For each $j$, we consider a piece of orbit of length $2^{k+1}(1+2j)$, coinciding with a piece of orbit 
within $\K$, and then ``going back'' to $J$. 
The quantity $E_{z,\gamma}(\BBone_{J})$ is larger than the contribution of these excursions, which is 
$$
A_{1}^{k}(z)\ge \sum_{j=1}^{\8} e^{-\frac\gamma{1-a}((2^{k+1}(1+2j))^{1-a}-1)-2^{k+1}(1+2j)z}.
$$
As $a<1$, $-2jz$ is eventually  larger than $(2^{k+1}(1+2j))^{1-a}$ for $z<0$ and the series trivially diverges. Then, $E_{z,\gamma}(\BBone_{J})$ diverges as well, and \eqref{equ2-clzgam-Ezgam} shows that $\CL_{z,\gamma}(\BBone_{J})$ diverges for every initial point $x \in J$. 
\end{proof}

Let us now consider the case $z=0$. 
As we are now looking for upper bounds, we can consider the $b_{j}$'s and the $d_{j}$'s as independent and sum over all possibilities (and thus forget the condition $d_{j+1}>d_{j}-b_{j}$).
Note that we trivially have $D_{N}\le 1$. 

For a piece of orbit of length $d$ and with an accident at $b$, $d-b=2^{k}$, the possible values of $d$'s are among $2^{k}(1+\frac{j}2)$, $j\ge 1$, and then $b=2^{k-1}j$. If $d-b=3 \cdot 2^{k}$, then the possible values of $d$'s are among  $2^{k}(1+\frac{j}2)$ with $j\ge 5$ and then $b=2^{k}(\frac{j}2-2)$. 

Let 
$$
B(z) :=
\sum_{k=4}^{\8} \sum_{j = 1}^{\8} e^{-\frac\gamma{1-a}\left(\left(2^{k}(1+\frac{j}2)\right)^{1-a}-2^{k(1-a)}\right)-j2^{k-1}z}.
$$
and 
$$
C(z):=\sum_{k=4}^{\8} \sum_{j = 5}^{\8} e^{-\frac\gamma{1-a}\left(\left(2^{k}(1+\frac{j}2)\right)^{1-a}-3^{1-a}2^{k(1-a)}\right)-2^{k-1}(j-4)z}.
$$
The quantity $B(z)$ is an upper bound for the cluster with one excursion of the form $d-b=2^{k}$, and $C(z)$ is an upper bound for the cluster with one excursion of the form $d-b=3\cdot 2^{k}$. 

Then multiplying $N$ copies 
to estimate from above the contribution of excursion with $N$ accidents
we get
$$
E_{z,\gamma}(\BBone_{J})\le \sum_N (B(z)+C(z))^N.
$$
Hence
\begin{align*}\label{eq:Lsum}
&(\CL_{0,\gamma}\BBone_{J})(x) \ \le \  
\left(\sum_{c_0 \ge 5} \sum_{\stackrel{\text{\tiny free}}{c_0-\text{\tiny paths}}}
e^{-\gamma \sum_{n=0}^{c_0-1} V(\s^n(y)) - c_0z} \right) \times \nonumber \\[3mm]
& \quad \left(\sum_{M \ge 0} \left[\left(\sum_{ (c_r)_{r=1}^M }
\sum_{\stackrel{\text{\tiny free}}{c_r-\text{\tiny paths}}}
e^{-\gamma \sum_{n=0}^{c_r-1} V(\s^{n+s_r}(y)) - c_rz}\right) \times \left(\sum_{N\ge1}(B(z)+C(z))^N\right)^M\right]\right), 
\end{align*}
where the sum over $(c_r)_{r=1}^M$  is $1$ by convention if $M = 0$.
The first factor (the sum over $c_0$) indicates the first cluster of 
free paths, and $c_0 \ge 5$ by our choice of the distance $\delta_J$.

Note that for the free pieces between excursions the orbit is relatively far from
$\K$, so there is $\eps > 0$ depending only on $\delta_J$ such that
\begin{equation}
\label{equ-encadre-freepath }
-c_r (\gamma + z) \le \sum_{n=0}^{c_r-1} -\gamma V(\s^{n+s_r}(y)) - c_rz \le -c_r (\eps \gamma + z).
\end{equation}

An upper bound for $\CL_{z,\gamma}(\BBone_{J})$ is obtained by taking an upper 
bound for $B$  and $C$ and majorizing the sum over the $c_{r}$ free paths by taking the sum over all the $c$ and the upper bound in \eqref{equ-encadre-freepath }. 

\begin{proof}[Proof of Theorem~\ref{theo-thermo-a<1}]
Lemma~\ref{lem-diveznegna} shows that for every $\gamma$, $z_{c}(\gamma)\ge 0$. Our goal is to prove that $B(0)+C(0)$ can be made as small as wanted by choosing $\gamma$ sufficiently large. This will imply that $z_{c}(\gamma)=0$ for sufficiently large $\gamma$ and that the unique equilibrium state is $\mu_{\K}$. 
We compute $B(0)$ leaving the very similar computation
for  $C(0)$ to the reader. 

Apply the inequality $1+u \ge 1+\log(1+u)$ for the value of $u$ such that
$1+u = (1+\frac{j}{2})^{1-a}$, to obtain 
$(1+\frac{j}{2})^{1-a} - 1 \ge \log(1+\frac{j}{2})^{1-a}$, whence
$e^{(1+\frac{j}{2})^{1-a} - 1} \ge (1+\frac{j}{2})^{1-a}$.
Raising this to the power $-\frac{\gamma}{1-a} 2^{k(1-a)}$ and summing over $j$, we get
\begin{eqnarray*}
\sum_{j=1}^\8 e^{-\frac{\gamma}{1-a} 2^{k(1-a)}((1+\frac{j}{2})^{1-a}-1)}
&\le& \sum_{j=1}^\8(1 +\frac{j}{2})^{-\gamma 2^{k(1-a)}} \\
&\le& \left(\frac23\right)^{\gamma 2^{k(1-a)}} +
 \int_1^\8\frac{dx}{ (1 +\frac{x}{2})^{-\gamma 2^{k(1-a)}} }\\
 &=& \left(1 + \frac{3}{\gamma 2^{k(1-a)}-1} \right) \left(\frac23\right)^{\gamma 2^{k(1-a)}}.
\end{eqnarray*}
Therefore
$$
B(0) = \sum_{k=4}^{\8}\sum_{j=1}^{\8}e^{\frac\gamma{1-a}2^{k(1-a)}(1-(1+\frac{j}2)^{1-a})}\le
\sum_{k=4}^{\8}  \left(1 + \frac{3}{\gamma 2^{k(1-a)}-1} \right) \left(\frac23 \right)^{\gamma 2^{k(1-a)}}
$$
is clearly finite and tends to zero as $\gamma \to \8$.

Now to estimate $(\CL_{z,\gamma} \BBone_{J})(x)$, we have to sum over the 
free periods as well and we have
\begin{eqnarray}\label{eq:bound}
(\CL_{0,\gamma} \BBone_{J})(x) &\le &
\left( \sum_{c \ge5} 2^c e^{-\eps \gamma c}  \right) \cdot  
 \sum_{M \ge 0} \left( \sum_{c \ge 1} 
2^c e^{-c \eps \gamma } (E_{0,\gamma}\BBone_{J})(\xi)  \right)^M  \nonumber \\
&\le&
\frac{32 e^{-5\eps\gamma}}{1-2e^{-\eps\gamma}} \sum_{M \ge 0} \left( \sum_{c \ge 1} 
2^ce^{-c \eps \gamma } \sum_{N \ge 1} (B(0)+C(0))^N  \right)^M \!\!\! .
\end{eqnarray}
The term in the brackets still tends to zero as $\gamma \to \infty$,
and hence is less than $1$ 
for $\gamma > \gamma_0$ and some  sufficiently large $\gamma_0$.
The double sum converges for such $\gamma$, 
so the critical $z_c(\gamma) \le 0$ for $\gamma \ge\gamma_0$.

Lemma~\ref{lem-diveznegna} shows that $z_{c}(\gamma)$ is always non-negative. 
Therefore $z_{c}(\gamma)=0$ for every $\gamma\ge \gamma_{0}$. 
In fact, for $\gamma$ sufficiently large (and hence $e^{-5\eps\gamma}$ is sufficiently small), 
the bound \eqref{eq:bound} is less than one:  for every $x \in J$, 
$(\CL_{0,\gamma}\BBone_{J})(x) < 1$. This means that
$\log\lambda_{0,\gamma}$,
\ie the logarithm of the 
spectral radius of $\CL_{z,\gamma}$, becomes zero at some value of $\gamma$, say $\gamma_2$. 

Lemma~\ref{lem-decreaz-zlambda} says that the spectral radius decreases in $z$. On the other hand the pressure $\CP(\gamma)$ is non-negative because $\int V\,d\mu_{\K}=0$. Moreover, the curve $z=\CP(\gamma)$ is given by the implicit equality $\lambda_{\CP(\gamma),\gamma}=1$. Therefore, the curve 
$\gamma \mapsto \CP(\gamma)$ is below the curve $\gamma \mapsto \log\lambda_{0,\gamma}$. 
Thus it must intersect the horizontal axis at some $\gamma_1\in[\gamma_{0},\gamma_{2}]$ (see Figure~\ref{fig-gamma12}).

\begin{figure}[htbp]
\begin{center}
\includegraphics[scale=0.5]{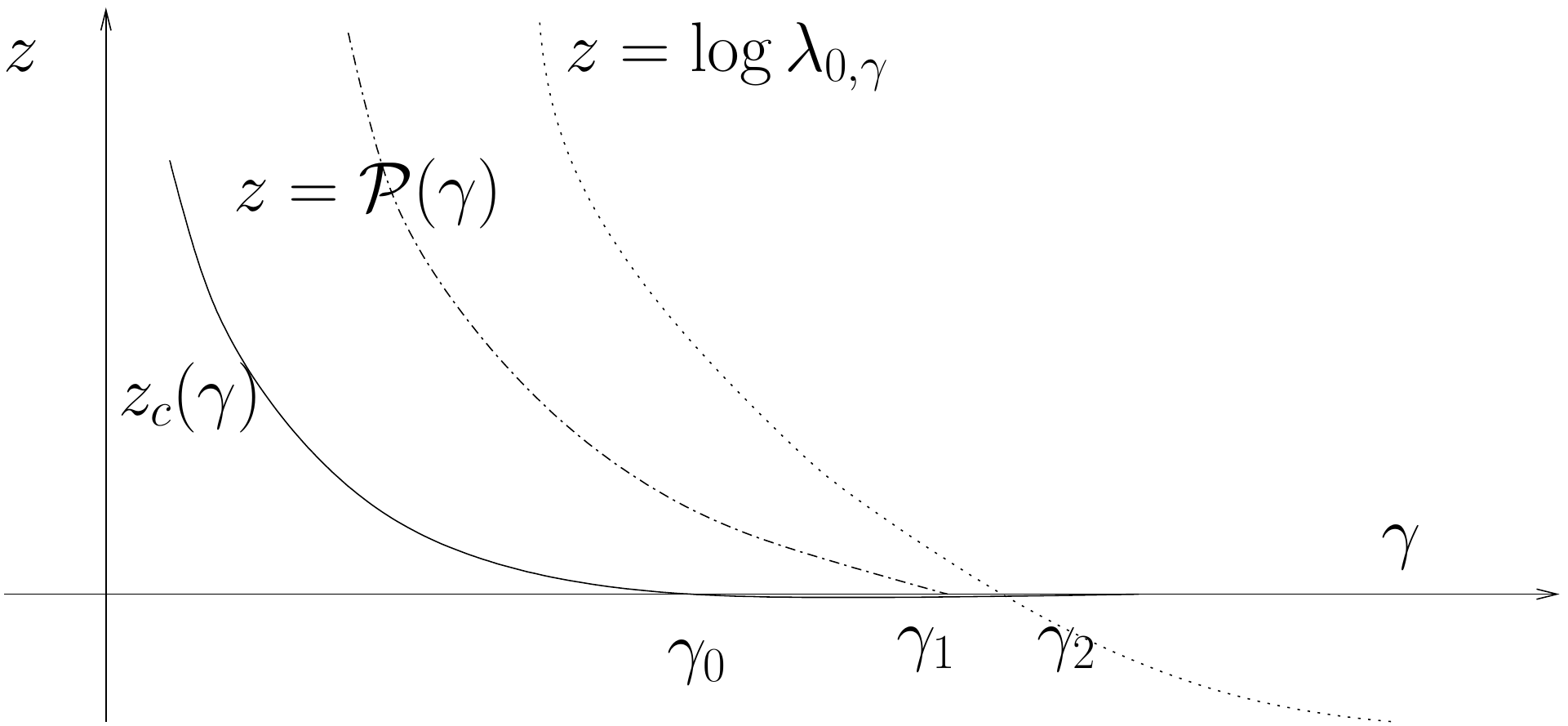}
\caption{Phase transition at $\gamma_1$}
\label{fig-gamma12}
\end{center}
\end{figure}

For $\gamma>\gamma_{1}$ convexity yields $\CP(\gamma)=0$, hence the
function is not analytic at $\gamma_1$ and we have an ultimate 
phase transition (for $\gamma=\gamma_1$). 
Analyticity for $\gamma<\gamma_{1}$ follows from Proposition~\ref{prop-equil-presspos}.
\end{proof}

%%%%%%%%%%%%%%%%%%%%%%
\subsubsection{Proof of Theorem~\ref{theo-thermo-a<1} for the general case}

Now we consider $V$ such that
$$
V(x) = n^{-a}+o(n^{-a}) \quad \text{ if } d(x,\K) = 2^{-n}.
$$
For every fixed $\eps_{0}$, there exists some $N_{0}$ such that for 
every $n\ge N_{0}$ and for $x$ such that $d(x,\K)=2^{-n}$,
$$
\left|V(x)-\frac1{n^{a}}\right|\le \frac{\eps_{0}}{n^{a}}.
$$
We can incorporate this perturbation in the free path, 
assuming that any path with length less than 
$N_{0}$ is a free path. Then all the above computations are valid provided we replace 
$\gamma$ by $\gamma(1\pm\eps_{0})$. This does not affect the results.

%%%%%%%%%%%%%%%%%%
\subsection{The proof of Theorem~\ref{theo-super-MP}}

As a direct application of Theorem~\ref{theo-thermo-a<1}, we can give a 
version of the Manneville-Pomeau map with a neutral Cantor set instead of a neutral fixed point.

\begin{proof}[Proof of Theorem~\ref{theo-super-MP}]
Pick $a > 0$, 
and consider $V$ and $\gamma_{1}$ as in Subsection~\ref{subsec-1/na} (only for $a<1$).  For $a>1$ we pick any positive $\gamma_{1}$. Define the canonical projection $\Pi:\S\rightarrow [0,1]$ 
by the dyadic expansion:
$$
\Pi(x_{0},x_{1},x_{2},\ldots)=\sum_{j}\frac{x_{j}}{2^{j+1}}.
$$
It maps $\K$ to a Cantor subset of $[0,1]$.
Only dyadic points in $[0,1]$ have two preimages under $\Pi$, namely 
$x_{1}\ldots x_{n}10^{\8}$ and $x_{1}\ldots x_{n}01^{\8}$ have the same image. 

\begin{lemma}
\label{lem-pot-W1na}
There exists a potential $W:\S\to\R$ such that 
$$W(x)=\frac1{n^{a}}+o(\frac1{n^{a}}) \quad \text{ if } d(x,\K)=2^{-n},$$
and it is continuous at dyadic points:
$$W(x_{1}\ldots x_{n}10^{\8})=W(x_{1}\ldots x_{n}01^{\8}),$$
and is positive everywhere except on $\K$ where it is zero.
\end{lemma}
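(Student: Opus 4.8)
The plan is to perturb the ``naive'' distance potential. Set $V_0(x):=n(x)^{-a}$, where $n(x):=-\log_2 d(x,\K)\in\N\cup\{\infty\}$ and $V_0\equiv 0$ on $\K$. Since $x\mapsto d(x,\K)$ is locally constant on $\S\sm\K$ and $n(x)\to\infty$ as $x\to\K$, the function $V_0$ is continuous on $\S$, positive off $\K$, zero on $\K$, and trivially $V_0(x)=n^{-a}$. The only possible defect is continuity at the dyadic points, i.e.\ the equalities $W(w'1 0^\infty)=W(w'0 1^\infty)$, so it remains to repair $V_0$ by an $o(n^{-a})$ perturbation supported near the (countable) set of points of the form $w'c\,\bar c^\infty$.

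First I would record two elementary facts. For $z\notin\K$ let $C_z:=[z_0\dots z_{n(z)}]$, so that $z_0\dots z_{n(z)-1}$ occurs in $\K$ while $z_0\dots z_{n(z)}$ does not. Then every point of $C_z$ is at distance exactly $2^{-n(z)}$ from $\K$, hence $V_0\equiv n(z)^{-a}$ on $C_z$; and for $z,z'\notin\K$ the cylinders $C_z,C_{z'}$ are equal or disjoint (if they meet with $n(z)\le n(z')$ then $C_{z'}\subseteq C_z$, so $z'_0\dots z'_{n(z)}$ is not a $\K$-word, which forces $n(z')=n(z)$ and $C_z=C_{z'}$). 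Second, consider a $\Pi$-pair $\{p,p'\}=\{w'1 0^\infty,\,w'0 1^\infty\}$. If $w'$ does not occur in $\K$ then it is the longest $\K$-admissible prefix of both points, so $n(p)=n(p')$ and $V_0(p)=V_0(p')$ already; if $w'$ does occur in $\K$, then, because $\K$ is cube-free and in particular contains neither $000$ nor $111$ (so constant runs in $\K$-words have length $\le2$; cf.\ Section~\ref{subsec-genresult-TM}, Proposition~\ref{prop-time-accident} and Corollary~\ref{cor-limited-overlap}), we get $n(p),n(p')\in[\,|w'|,|w'|+3\,]$, hence $|n(p)-n(p')|\le3$, and moreover each cylinder $C_z$ contains only boundedly many points of the pair form $w'c\,\bar c^\infty$ (the constant tail of such a point, together with the pinned prefix of length $n(z)+1$, determines $w'$ up to a bounded ambiguity).

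Now I would build $W$ pair by pair. For each $\Pi$-pair $\{p,p'\}$ with $n(p)<n(p')$ I would overwrite $V_0$ on a sub-cylinder $S_{p'}=[p'_0\dots p'_N]\subseteq C_{p'}$ around the deeper point $p'$, setting $W\equiv n(p)^{-a}$ there, with $N$ chosen large enough that $S_{p'}$ contains no other point of pair form. Since $S_{p'}\subseteq C_{p'}$, all its points lie at distance $2^{-n(p')}$ from $\K$, and $|n(p)^{-a}-n(p')^{-a}|=O(n(p')^{-a-1})=o(n(p')^{-a})$ because $|n(p)-n(p')|\le3$; so the modification changes $V_0$ only by $o(n^{-a})$, it keeps $W$ continuous (cylinders are clopen, so $W$ is still locally constant off $\K$ and still tends to $0$ at $\K$), positive off $\K$, zero on $\K$, and it achieves $W(p)=W(p')=n(p)^{-a}$. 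The delicate point, and the main obstacle, is to make the countably many surgery cylinders $S_{p'}$ simultaneously pairwise disjoint: distinct pairs have distinct deeper points, the $C_{p'}$ are equal or disjoint, and when $C_{p'}=C_{\hat p'}$ the finiteness statement above lets us shrink $S_{p'}$ and $S_{\hat p'}$ to separate $p'$ from $\hat p'$; organising this simultaneous choice, which rests on the limited-overlap combinatorics of the Thue--Morse shift, is the one part of the argument that requires real care. With the $S_{p'}$ fixed, the four asserted properties of $W$ follow as above.
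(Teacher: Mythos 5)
Your approach is a genuine alternative to the paper's: the paper works on the interval side, modifying the multivalued function $V\circ\Pi^{-1}$ by a continuous, piecewise-linear ``convex'' adjustment in a small one-sided neighborhood of each dyadic point, and then pulls back to $\S$; you stay in $\S$ and do a piecewise-constant surgery on cylinders around the deeper member of each pair. The core quantitative input is identical in both cases --- cube-freeness of $\K$ forces $|n(p)-n(p')|\le 3$, so the correction is $O(n^{-a-1})=o(n^{-a})$ --- and your version has the mild advantage of keeping $W$ locally constant off $\K$. So the route is sound in outline.

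However, there is a real gap in the disjointness/finiteness part, which you rightly flag as the delicate step but then support with a false claim. You assert that ``each cylinder $C_z$ contains only boundedly many points of the pair form $w'c\bar c^\infty$'' and, based on this, that $N$ can be chosen so that $S_{p'}=[p'_0\dots p'_N]$ ``contains no other point of pair form.'' Neither is true: eventually-constant sequences are dense in $\S$, so \emph{every} cylinder contains infinitely many pair-form points. Your parenthetical justification --- that the constant tail together with the pinned prefix of length $n(z)+1$ determines $w'$ up to bounded ambiguity --- breaks down exactly because the constant tail of a point in $C_z$ may start at an arbitrarily late index: $z_0\dots z_{n(z)}\,u\,c\,\bar c^\infty\in C_z$ for every finite word $u$. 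As written, the construction therefore cannot proceed.

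The fix is small but necessary, and it is worth spelling out because it is precisely where the Thue--Morse combinatorics enter. Only pairs with $w'$ occurring in $\K$ need surgery (your Case~2); for these, $w'$ is a $\K$-admissible prefix of the deeper point $p'$, hence $|w'|\le n(p')$ (a longer $w'$ would contain the non-admissible word $p'_0\dots p'_{n(p')}$), and cube-freeness gives $|w'|\ge n(p')-3$. So inside a fixed $C_z$ there are at most four such $p'$ (one for each admissible value of $|w'|\in[n(z)-3,n(z)]$, and $w'$ is then forced to be the corresponding prefix of $z$), and likewise at most four shallower partners $p$ lying in any $C_{z'}$. It is this bounded list of ``relevant'' pair-form points per $C_z$ --- not the set of all eventually-constant sequences --- that one can separate by shrinking the $S_{p'}$, and then the rest of your verification of the four properties of $W$ goes through.
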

\begin{proof}
Let us consider the multi-valued function $V\circ\Pi^{-1}$ on the interval. It is uniquely defined at 
each non-dyadic point. For a dyadic point, consider the two preimages 
$x_{1}\ldots x_{n}10^{\8}$ and $x_{1}\ldots x_{n}01^{\8}$  in $\S$.

\paragraph{\it Case 1.} The word $x_{1}\ldots x_{n}$ (which is $\K$-admissible) has a single suffix in $\K$, say $0$. This means that $x_{1}\ldots x_{n}0$ is an admissible word for $\K$ but not $x_{1}\ldots x_{n}1$. 
Let $\ul{x}^{-}:=x_{1}\ldots x_{n}01^{\8}$ and  $\ul{x}^{+}:=x_{1}\ldots x_{n}10^{\8}$. Then
 \begin{equation}
\label{equ1-modif1pot}
d(\ul{x}^{+},\K)=2^{-n}>d(\ul{x}^{-},\K)>2^{-n-4},
\end{equation}
 where the last inequality comes from the fact that $x_{1}\ldots x_{n}0111$ is not admissible for $\K$.  

We modify the potential $V$ on the right side hand of the dyadic point $\Pi(x_{1}\ldots x_{n}10^{\8})$ as indicated on Figure~\ref{fig-modif1}.

\begin{figure}[htbp]
\begin{center}
\includegraphics[scale=0.5]{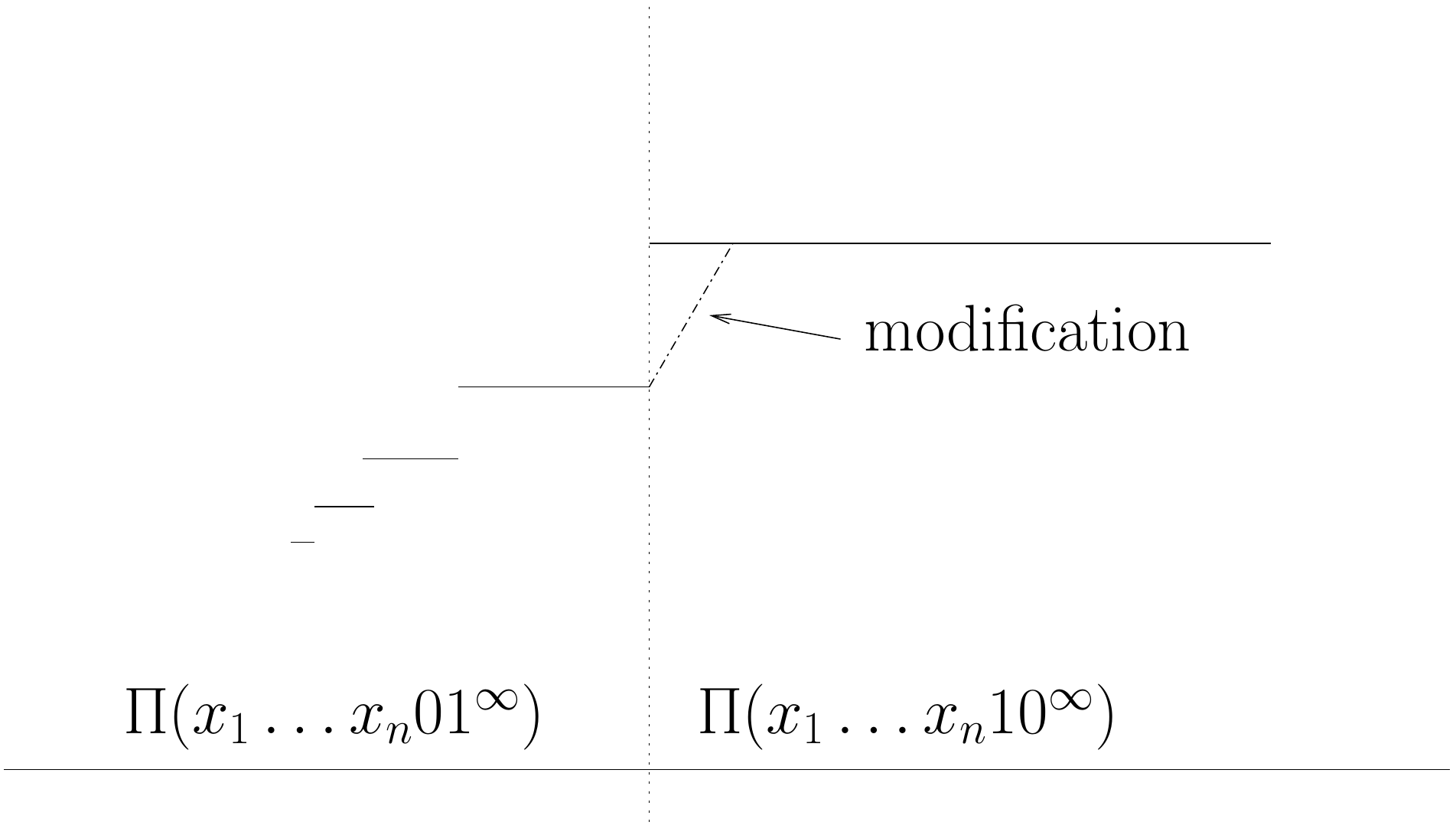}
\caption{The modification for words with a single suffix}
\label{fig-modif1}
\end{center}
\end{figure}

The inequalities of \eqref{equ1-modif1pot}  yield 

$$
V(\ul{x}^{-})= \frac1{(n+k)^{a}}=\frac1{n^{a}}-\frac{ak}{n^{a+1}}+o(\frac1{n^{a+2}})
=V(\ul{x}^{+})+o(V(\ul{x}^{+})),
$$
where $k$ is an integer in $[1,4]$. 
As the modification is done ``convexly'', the new potential $W$ satisfies for these modified points
$$W(x)=\frac1{n^{a}}+o(\frac1{n^{a}})\quad \text{ if }d(x,\K)=2^{-n}.$$

\paragraph{\it Case 2.} The word $x_{1}\ldots x_{n}$ (which is $\K$-admissible) has two suffixes in $\K$. It may be that $\ul{x}_{+}$ and $\ul{x}_{-}$ are at the same distance to $\K$ (see Figure~\ref{fig-modif2}). Then we do not need to change the potential around this dyadic point. 

\begin{figure}[htbp]
\begin{center}
\includegraphics[scale=0.5]{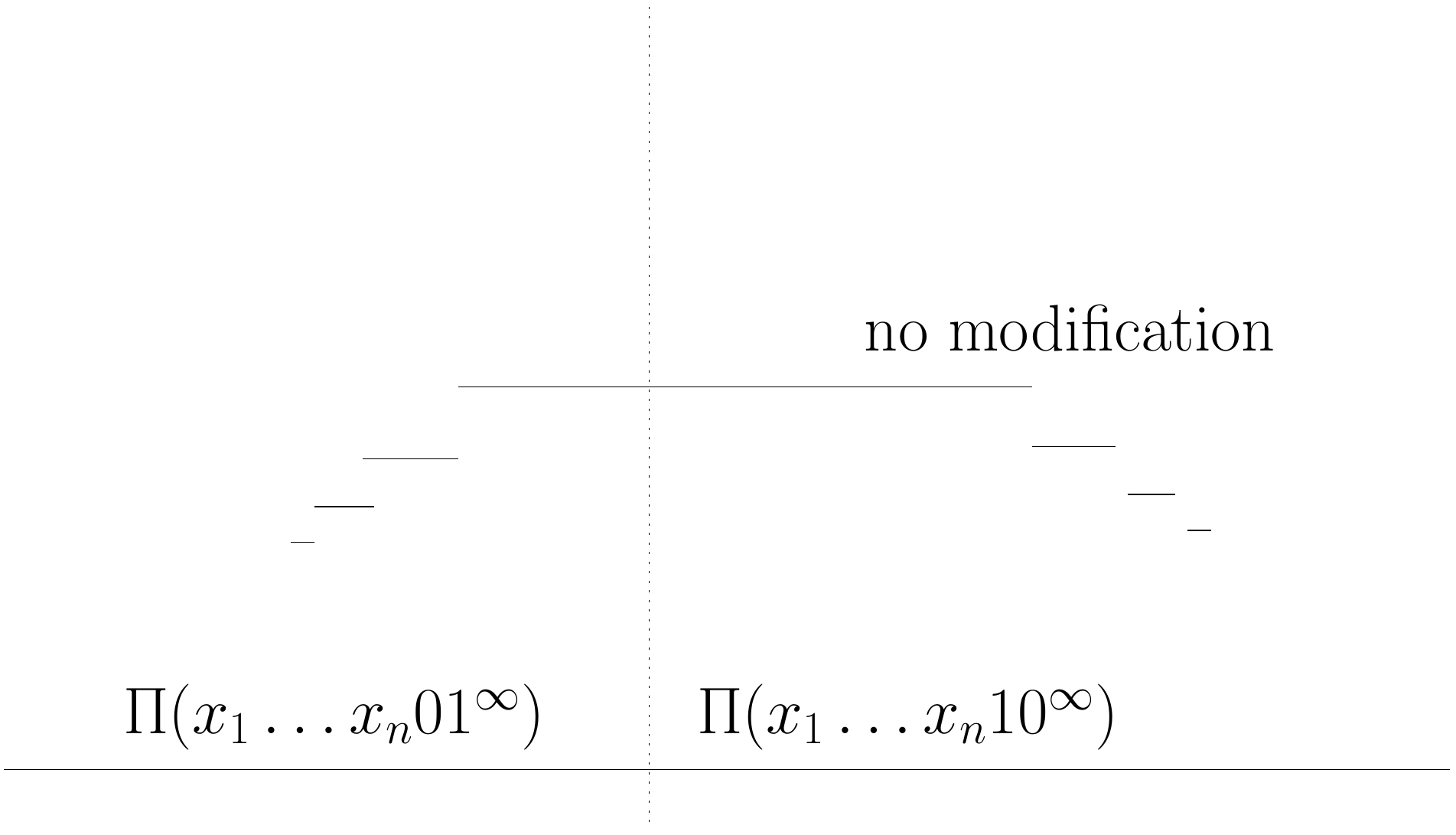}
\caption{No modification with two different suffixes}
\label{fig-modif2}
\end{center}
\end{figure}

If $V(\ul{x}^{+})\neq V(\ul{x}^{-})$, neither $x_{1}\ldots x_{n}0111$ nor 
$x_{1}\ldots x_{n}1000$ are admissible for $\K$ and we modify the potential linearly in that 
region in the interval as Figure~\ref{fig-modif3}. 

\begin{figure}[htbp]
\begin{center}
\includegraphics[scale=0.5]{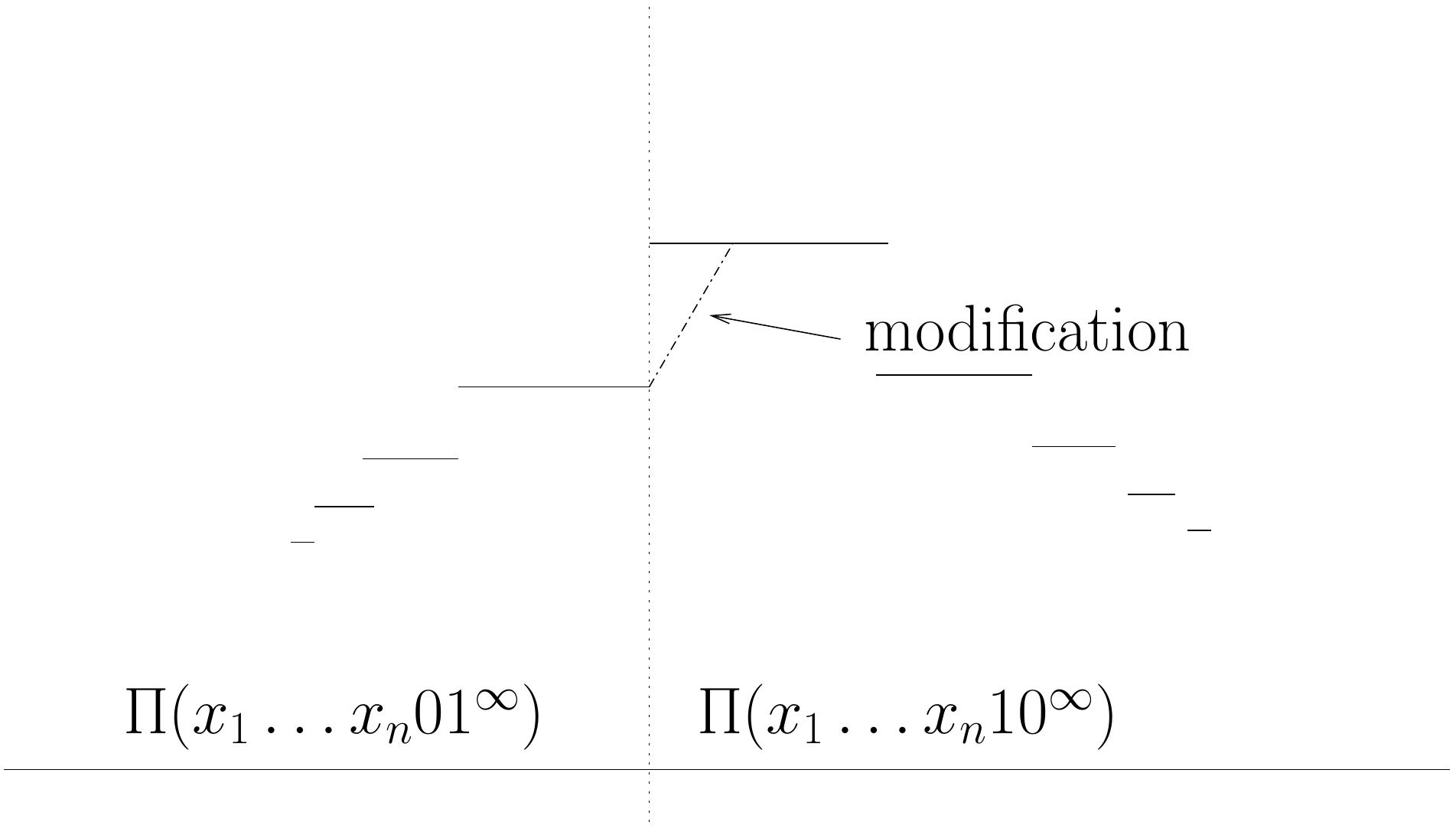}
\caption{modification with two different suffixes}
\label{fig-modif3}
\end{center}
\end{figure}

Again we have
$$
V(\ul{x}^{+})=\frac1{(n+j)^{a}}=\frac1{n^{a}}+o(\frac1{n^{a}})
\quad \text{ and } \quad 
V(\ul{x}^{-}) = \frac1{(n+k)^{a}} = \frac1{n^{a}}+o(\frac1{n^{a}}),
$$ 
where $j$ and $k$ are different integers in $\{ 1,2,3,4\}$. 
Hence, for these points too, $W$ satisfies
$$
W(x)=\frac1{n^{a}}+o(\frac1{n^{a}}) \quad \text{ if }d(x,\K)=2^{-n}.
$$
Positivity of $W$ away from $\K$ follows from the positivity of $V$ and the way of modifying it to get $W$. Clearly $W$ vanishes on $\K$.
\end{proof}

  \paragraph{\it The case $a<1$}
Continuing the proof of Theorem~\ref{theo-super-MP},
the eigen-measure $\nu_{a}$ in $\S$ is a fixed point for the adjoint 
of the transfer operator for $\gamma_{1}$ (the pressure vanishes at $\gamma_{1}$) for the potential $W$. 
As the potential $W$ is continuous and the shift is Markov, such a measure always exists. It is conformal in the sense that  
\begin{equation}
\label{equ1-meas-conformal}
\nu(\s(B))=\int_{B} e^{\CP(\gamma_{1})+\gamma_{1}W}\,d\nu_{a}=\int_{B} e^{\gamma_{1}W}\,d\nu_{a},
\end{equation}
for any Borel set $B$ on which $\s$ is one-to-one. 
Since we have a phase transition at $\gamma_{1}$, 
$\CP(\gamma_1) = 0$. Note also that $W$ is positive everywhere except on $\K$ where it vanishes. 

Now consider the measure $\Pi_{*}(\nu_{a})$ and its distribution function
$$\theta_{a}(x) := 
\nu_{a}([0^\8,\Pi(x)))=\nu_{a}([0^\8,\Pi(x)]),
$$
the last equality resulting from the fact that $\nu_{a}$ is non-atomic. 
We emphasize that $\Pi$ maps the lexicographic order in $\S$ 
to the usual order on the unit interval $[0,1]$. 
This enables us to define \emph{intervals} in $\S$, for which we will 
use the same notation $[x,y]$.

Let us now compute the derivative of $f_{a}$ define by 
$$f_{a}:=\theta_{a}\circ\Pi\circ\s\circ\Pi^{-1}\circ \theta_{a}^{-1}$$
at some point $x \in [0,1]$. 
For $h$ very small we define $y$ and $y_{h}$ in $[0,1]$ such that 
$\Pi_{*}\nu_{a}([0,y])=x$ and $\Pi_{*}\nu_{a}([0,y_{h}])=x+h$. 
Also define $\ul y$ and $\ul{y_{h}}$ such that $\Pi(\ul y)=y$ and $\Pi(\ul{y_{h}})=y_{h}$. 
Then we get 
\begin{eqnarray*}
\frac{f_{a}(x+h)-f_{a}(x)}{h}&=&\frac{\nu_{a}([\s(\ul y),\s(\ul{y_{h}})])}{\nu_{a}([\ul y,\ul{y_{h}}])} \\
&=& \frac{\nu_{a}(\s([\ul y,\ul{y_{h}}]))}{\nu_{a}([\ul y,\ul{y_{h}}])}\\
&=&\frac1{\nu_{a}([\ul y,\ul{y_{h}}])}\int_{[\ul y,\ul{y_{h}}]} e^{\gamma_{1}W}\,d\nu_{a}\ \rightarrow_{h\to 0} \ e^{\gamma_{1}W(y)}.
\end{eqnarray*}
This computation is valid if $\Pi^{-1}(y)$ is uniquely determined (namely $y$ is not dyadic). If $y_{h}$ is dyadic for some $h$, then we choose for $\ul{y_{h}}$ the one closest to $\ul y$. 

If $y$ is dyadic, then the same can be done provided we change the preimage of $y$ by $\Pi$ depending on whether we compute left or right derivative. 
Nevertheless, the potential $W$ is continuous at dyadic points, hence $f_{a}$ has left and right derivative at every dyadic points and they are equal.

We finally get $f'_{a}(x)=e^{\gamma_{1}W\circ\Pi^{-1}(x)}$ (this make sense also for dyadic points) and then $\log f'_{a}(x)=\gamma_{1}W\circ\Pi^{-1}(x)$. Therefore $f_{a}$ is $\CC^1$ and as $W$ is positive away from $\K$ and zero on $\K$, $f_{a}$ is expanding away from $\wt\K:=\theta_{a}\circ\Pi(\K)$ and is indifferent on $\wt\K$. 
For $t\in[0, \infty)$, the lifted potential for $-t\log f'_{a}$ is $-t\gamma_{1}W\circ \Pi^{-1}$, 
which has an ultimate phase transition for $t=1$ and $a \in (0,1)$.
 \end{proof}

\paragraph{\it The case $a>1$}
Computations are similar to the case $a<1$, except that we have to add the 
pressure for $\gamma_{1}$. The construction is the same, but the map $f_{a}$ satisfies :
$$f'_{a}(x)=e^{\gamma_{1}W\circ\Pi^{-1}(x)+\CP(\gamma_{1})}.$$
This extra term is just a constant and then, the thermodynamic formalism for $-t\log f'_{a}$ is the same that the one for $-t\gamma_{1}W\circ\Pi^{-1}$. 

%%%%%%%%%%%%%%%%%%%%%%%%%%%%%%%%
\subsection{Unbounded potentials: Proof of Theorem~\ref{theo-thermo-Vu}}
\label{subsec-unbounded2}

We know from Subsection~\ref{subsec-unbounded1} 
that $\CR$ fixes the potential $V_u$, defined in \eqref{eq:Vu}. 
In this section we set $g \equiv \alpha$, which gives $V_u = \alpha(k-1)$ on 
$(\s \circ H)^k(\S) \setminus (\s \circ H)^{k+1}(\S)$.
For the thermodynamic properties of this potential, the interesting 
case is $\alpha < 0$ (see the Introduction before the statement of 
Theorem~\ref{theo-thermo-Vu} and the Appendix).

\begin{lemma}\label{lem-vu-inte-pos}
Let $\alpha < 0$. Then 
$\int V_u d\mu \ge \int_\Sigma V_u d\mu_\K = 0$ for every shift-invariant measure probability $\mu$.
\end{lemma}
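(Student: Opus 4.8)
The plan is to exploit the recursive structure of $V_u$ together with the disjoint decomposition $\S\setminus\s^{-1}\{\rho_0,\rho_1\}=\bigsqcup_{k\ge0}\big((\s\circ H)^k(\S)\setminus(\s\circ H)^{k+1}(\S)\big)$ and Lemma~\ref{lem:disjoint}. First I would record that since $\mu_\K$ is supported on $\K$, where the relevant accident structure is trivial, and since $V_u$ is a coboundary-type expression built from $g\equiv\alpha$ via \eqref{eq:Vu}, the integral $\int_\S V_u\,d\mu_\K$ can be computed directly; the telescoping in $V_u(x)=g(x)-\sum_{j=1}^k g\circ\s^{2^j-2}\circ H^j(x)$ combined with $\s$-invariance of $\mu_\K$ gives $\int V_u\,d\mu_\K=0$. (Alternatively, invoke Lemma~\ref{lem-integralV}: $V_u$ is an $\CR$-fixed point and lies in $L^1(\mu_\K)$ because $\mu_\K(\K)=1$ and $V_u$ is finite on $\K$, so its $\mu_\K$-integral vanishes.)

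For the main inequality, the idea is that for $\alpha<0$ the function $V_u$ is ``large'' (close to $0$, its maximum among the relevant values) precisely on the set $H(\S)=\S\setminus\s\circ H(\S)$ where $V_u\le 0$ is least negative, and gets more negative as one moves into deeper layers $(\s\circ H)^k(\S)$. The key observation is a pointwise recursion: writing $\chi_k$ for the indicator of $(\s\circ H)^k(\S)\setminus(\s\circ H)^{k+1}(\S)$, one has $V_u=\alpha\sum_{k\ge1}(k-1)\chi_k=\alpha\sum_{k\ge1}\sum_{j\ge k}\chi_j\cdot[\text{something}]$, i.e. $V_u=\alpha\sum_{m\ge1}\BBone_{(\s\circ H)^m(\S)}$ up to the sign convention, so that $\int V_u\,d\mu=\alpha\sum_{m\ge1}\mu\big((\s\circ H)^m(\S)\big)$. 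Since $\alpha<0$, minimizing $\int V_u\,d\mu$ is the same as maximizing $\sum_m\mu\big((\s\circ H)^m(\S)\big)$. The plan is then to show $\sum_m\mu\big((\s\circ H)^m(\S)\big)\le\sum_m\mu_\K\big((\s\circ H)^m(\S)\big)$ for every shift-invariant $\mu$, with equality when $\mu=\mu_\K$.

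To prove that comparison I would use Lemma~\ref{lem:disjoint}: for each $k$, $\K=\bigsqcup_{j=0}^{2^k-1}\s^j\circ H^k(\K)$, which forces $\mu_\K(H^k(\S))\ge\mu_\K(H^k(\K))=2^{-k}$, and in fact the nested structure $(\s\circ H)^m(\S)\supset(\s\circ H)^{m+1}(\S)$ together with $\s$-invariance lets me estimate $\mu\big((\s\circ H)^m(\S)\big)$ from above. Concretely, $(\s\circ H)^m(\S)$ is contained in a union of cylinders whose total $\mu$-mass is controlled by how much of $\mu$ is ``carried into $\K$'', and for $\mu_\K$ this union is as large as possible because $\mu_\K$ lives entirely on $\K$; any invariant $\mu$ giving positive mass to $\K^c$ must ``leak'' mass out of $(\s\circ H)^m(\S)$ as $m\to\infty$ (indeed $\bigcap_m(\s\circ H)^m(\S)=\s^{-1}\{\rho_0,\rho_1\}$ has zero mass for any non-atomic $\mu$, but the rate for $\mu_\K$ is the slowest). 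Making this quantitative — showing the series $\sum_m\mu\big((\s\circ H)^m(\S)\big)$ is genuinely maximized at $\mu_\K$ and not merely bounded — is the step I expect to be the main obstacle, and I would handle it by decomposing $\mu=\mu(\K)\mu_\K+(1-\mu(\K))\mu'$ with $\mu'$ an invariant measure on the ``transient'' part, bounding $\sum_m\mu'\big((\s\circ H)^m(\S)\big)$ term by term using that the first-return structure to $\K^c$ (as in Subsection~\ref{subsec:counting}) forces excursions to eventually leave every deep layer, and then using convexity/linearity of $\mu\mapsto\int V_u\,d\mu$ to reduce to the extreme cases. Since $\int V_u\,d\mu_\K=0$, this yields $\int V_u\,d\mu\ge0$ for all invariant $\mu$, which is the claim.
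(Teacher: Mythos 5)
Your reformulation of the integral is correct and matches what the paper does: reading off $V_u = \alpha\bigl(\sum_{m\ge1}\BBone_{(\s\circ H)^m(\S)}-1\bigr)$ gives $\int V_u\,d\mu = \alpha\bigl(\sum_{m\ge1}\mu\bigl((\s\circ H)^m(\S)\bigr)-1\bigr)$, which is exactly what the paper obtains (via Abel summation) from $\int V_u\,d\mu = \alpha\sum_k(k-1)\,\mu\bigl((\s\circ H)^k(\S)\setminus(\s\circ H)^{k+1}(\S)\bigr)$. Since $\alpha<0$, the lemma reduces to showing $\sum_{m\ge1}\mu\bigl((\s\circ H)^m(\S)\bigr)\le1$ for every shift-invariant $\mu$, with equality only for $\mu_\K$. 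Your two routes to $\int V_u\,d\mu_\K=0$ (direct telescoping, or Lemma~\ref{lem-integralV}) are both fine.

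The gap is in how you propose to close the comparison. You flag the statement ``$\sum_m\mu\bigl((\s\circ H)^m(\S)\bigr)$ is maximized at $\mu_\K$'' as the main obstacle and suggest attacking it by writing $\mu=\mu(\K)\mu_\K+(1-\mu(\K))\mu'$ and counting excursions of $\mu'$-typical orbits out of deep layers. This is both unnecessary and not clearly workable: the estimate you need has nothing to do with how $\mu$ splits between $\K$ and its complement. What Lemma~\ref{lem:disjoint} actually yields --- and what the paper uses --- is the term-by-term bound $\mu\bigl((\s\circ H)^m(\S)\bigr)\le 2^{-m}$ for \emph{every} shift-invariant probability $\mu$. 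Indeed, the shifts $\s^j\bigl((\s\circ H)^m(\S)\bigr)$ for $0\le j<2^m$ are pairwise disjoint (the cylinder-level incarnation of Lemma~\ref{lem:disjoint}), and $\mu(\s^j(A))\ge\mu(A)$ for invariant $\mu$ since $A\subset\s^{-j}(\s^j(A))$, so $2^m\,\mu\bigl((\s\circ H)^m(\S)\bigr)\le1$. Hence $\sum_{m\ge1}\mu\bigl((\s\circ H)^m(\S)\bigr)\le\sum_{m\ge1}2^{-m}=1$ at once, and equality forces $\mu\bigl((\s\circ H)^m(\S)\bigr)=2^{-m}$ for all $m$, i.e.\ $\mu=\mu_\K$. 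You already have the right ingredients in hand (Lemma~\ref{lem:disjoint}, $\s$-invariance), but you never assemble them into this one-line estimate; instead you steer into an excursion/decomposition route that you yourself describe as an unresolved obstacle.
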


\begin{proof}
As in Lemma~\ref{lem-sigmacirch},
the set $(\s \circ H)^k(\S) = \s^{2^k-1} \circ H^k( [00] \sqcup [10] \sqcup [01] \sqcup [11])$ consists of four $2^k+1$-cylinders containing the points
$1\rho_0$, $0\rho_0$, $1\rho_1$ and $1\rho_1$ respectively, and 
they are mapped
into the two $2^k$-cylinders containing $\rho_0$ and $\rho_1$.
In other words, $(\s \circ H)^k(\S) = \s^{-1} \circ H^k(\S)$, and 
by Lemma~\ref{lem:disjoint}, its next $2^k$ shifts are pairwise disjoint.
Therefore $\mu_\K((\s \circ H)^k(\S)) = 2^{-k}$ and
 $\mu_\K((\s \circ H)^k(\S) \setminus (\s \circ H)^{k+1}(\S)) = 2^{-(k+1)}$.
Since $V_u = \alpha(k-1)$ on 
$(\s \circ H)^k(\S) \setminus (\s \circ H)^{k+1}(\S)$
this gives
$$
\int V_u \ d\mu_\K = \alpha \sum_{k \ge 0} (k-1) 2^{-(k+1)} = 
-\frac{\alpha}{2} + \alpha\sum_{k \ge 2} k 2^{-(k+1)} = 0.
$$ 
Again, since $\s^j((\s \circ H)^k(\S)$
is disjoint from $((\s \circ H)^k(\S))$
for $0 < j < 2^k$, its $\mu$-mass is at most
$2^{-k}$ for any shift-invariant probability measure $\mu$.
Since $V_u$ is decreasing in $k$ (for $\alpha < 0$), we can minimize 
the integral 
$\int V_u \ d\mu$ by putting as much mass on 
$(\s \circ H)^k(\S)$ as possible, for each $k$.
But this means that the $\mu$-mass of 
$(\s \circ H)^k(\S) \setminus (\s \circ H)^{k+1}(\S)$ becomes
$2^{-(k+1)}$ for each $k$, and hence $\mu = \mu_\K$.
\end{proof}
\begin{remark}
\label{rem-measu-discrecylin}
As a by-product of our proof, $\mu\left((\s\circ H)^{k}(\S)\right)\le 2^{-k}$
for any invariant probability $\mu$ and $k \ge 2$. $\hfill \blacksquare$
\end{remark}

For fixed $\alpha<0$, the integral $\int V_{u}\,d\mu$ is non-negative and we define  for $\gamma\ge 0$
$$
\CP(\gamma):=\sup_{\mu\ \s-inv}\left\{h_{\mu}-\gamma\int V_{u}\,d\mu\right\}.
$$

\begin{proposition}
\label{prop-exit-mesdeq-vu}
For any $\gamma\ge 0$ there exists an equilibrium state for $-\gamma V_{u}$.
\end{proposition}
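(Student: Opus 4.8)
The plan is to realize the equilibrium state as a maximizer of the free-energy functional $\mu\mapsto h_{\mu}(\s)-\gamma\int V_{u}\,d\mu$ over the space $\CM_{\s}(\S)$ of $\s$-invariant Borel probability measures on $\S$ (with $\alpha<0$ fixed as above). This space is non-empty (it contains $\mu_{\K}$) and weak-$*$ compact, so it suffices to show that this functional is bounded above and upper semi-continuous; it then attains its supremum, and any maximizer is by definition an equilibrium state for $-\gamma V_{u}$. Boundedness is immediate from Lemma~\ref{lem-vu-inte-pos}: every $\mu\in\CM_{\s}(\S)$ satisfies $\int V_{u}\,d\mu\ge 0$, so $0=h_{\mu_{\K}}(\s)-\gamma\int V_{u}\,d\mu_{\K}\le\CP(\gamma)=\sup_{\mu}\bigl(h_{\mu}(\s)-\gamma\int V_{u}\,d\mu\bigr)\le\sup_{\mu}h_{\mu}(\s)=\log 2$.

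Upper semi-continuity of $\mu\mapsto h_{\mu}(\s)$ is classical, the full shift being expansive. The real difficulty is the potential $V_{u}$: it is only locally constant and it is unbounded below near $\s^{-1}(\rho_{0})$ and $\s^{-1}(\rho_{1})$, so Proposition~\ref{prop-equil-presspos} does not apply. I would control this tail using Remark~\ref{rem-measu-discrecylin}. Since $|V_{u}|=|\alpha|\,|k-1|$ on the ``annulus'' $(\s\circ H)^{k}(\S)\setminus(\s\circ H)^{k+1}(\S)\subset(\s\circ H)^{k}(\S)$ and $\mu\bigl((\s\circ H)^{k}(\S)\bigr)\le 2^{-k}$ for every $\mu\in\CM_{\s}(\S)$ and every $k\ge 2$, one gets the uniform integrability bound
$$
\sup_{\mu\in\CM_{\s}(\S)}\ \int_{\{\,|V_{u}|>R\,\}}|V_{u}|\,d\mu\ \le\ \sum_{k>\frac{R}{|\alpha|}+1}|\alpha|(k-1)\,2^{-k}\ \longrightarrow\ 0\qquad(R\to\infty),
$$
which in particular re-proves that $V_{u}\in L^{1}(\mu)$ with $\int V_{u}\,d\mu\in[0,|\alpha|]$ for all $\mu\in\CM_{\s}(\S)$.

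The conclusion is then routine. Set $V_{u}^{(R)}:=\max(V_{u},\alpha R)$; each $V_{u}^{(R)}$ is bounded and locally constant on all of $\S$ (it equals $\alpha R$ in a neighbourhood of $\s^{-1}\{\rho_{0},\rho_{1}\}$), hence continuous, so $\mu\mapsto\int V_{u}^{(R)}\,d\mu$ is weak-$*$ continuous; moreover $\int|V_{u}-V_{u}^{(R)}|\,d\mu\le\int_{\{|V_{u}|>|\alpha|R\}}|V_{u}|\,d\mu\to 0$ as $R\to\infty$, uniformly in $\mu\in\CM_{\s}(\S)$. Thus $\mu\mapsto\int V_{u}\,d\mu$ is a uniform limit of weak-$*$ continuous functions on the compact space $\CM_{\s}(\S)$, hence continuous, and $\mu\mapsto h_{\mu}(\s)-\gamma\int V_{u}\,d\mu$ is upper semi-continuous and bounded above; it therefore attains its maximum, which is the desired equilibrium state. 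The one genuinely non-trivial point is the uniform control of the unbounded-below part of $V_{u}$ near $\s^{-1}\{\rho_{0},\rho_{1}\}$, and this is exactly what Remark~\ref{rem-measu-discrecylin} provides; uniqueness of the equilibrium state and analyticity of $\gamma\mapsto\CP(\gamma)$ are separate issues, to be treated afterwards via the transfer operator $\CL_{z,\gamma}$.
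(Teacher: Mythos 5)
Your proof is correct and takes a genuinely different, arguably cleaner route than the paper's. The paper takes a maximizing sequence $\nu_{\eps}$, extracts a weak-$*$ limit $\nu$, and then splits into two cases according to whether $\nu(\K)=0$ (so the discontinuity of $V_{u}$ is ``invisible'') or $\nu(\K)>0$; the second case is handled by Lemma~\ref{lem-liminf}, which is really a lower semicontinuity statement $\liminf_{\eps}\int V_{u}\,d\nu_{\eps}\ge(1-\beta)\int V_{u}\,d\mu=\int V_{u}\,d\nu$, proved by splitting the integral over an $\eta$-neighbourhood of $\K$. You instead prove the stronger statement that $\mu\mapsto\int V_{u}\,d\mu$ is genuinely weak-$*$ \emph{continuous} on $\CM_{\s}(\S)$, via the uniform-integrability estimate coming from Remark~\ref{rem-measu-discrecylin} and the truncations $V_{u}^{(R)}=\max(V_{u},\alpha R)$, which are bounded and locally constant (hence continuous, since each $(\s\circ H)^{k}(\S)$ is a clopen union of four $(2^{k}+1)$-cylinders). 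Both proofs rest on exactly the same quantitative input, namely $\mu\bigl((\s\circ H)^{k}(\S)\bigr)\le 2^{-k}$, but your route avoids the case analysis and the decomposition $\nu=(1-\beta)\mu+\beta\mu_{\K}$ entirely, and would in fact subsume Lemma~\ref{lem-liminf}. One small bookkeeping point worth making explicit if you write this up: Remark~\ref{rem-measu-discrecylin} is stated for $k\ge 2$ only, but your tail sum only involves large $k$, so this causes no trouble; and you should note that continuity of each $V_{u}^{(R)}$ uses that the sets $(\s\circ H)^{k}(\S)$ are clopen, which is what guarantees $V_{u}^{(R)}$ is locally constant including at the four limit points $\s^{-1}\{\rho_{0},\rho_{1}\}$.
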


To prove this proposition, we need a result on the 
accumulation value $\lim\inf_{\eps \to 0}\int V_{u}d\nu_{\eps}$ if $\{ \nu_{\eps} \}_\eps$ is a family of invariant probability
measures. 

\begin{lemma}
\label{lem-liminf}
Let $\nu_{\eps}$ be a sequence of invariant probability
measures converging to $\nu$ in the weak topology as $\eps \to 0$. 
Let us set $\nu:=(1-\beta)\mu+\beta\mu_{\K}$, where $\mu$ is an invariant probability 
measure satisfying $\mu(\K)=0$ and $\beta\in [0,1]$. 
Then,
$$\liminf_{\eps\to 0}\int V_{u}\,d\nu_{\eps}\ge (1-\beta)\int V_{u}\,d\mu.$$
\end{lemma}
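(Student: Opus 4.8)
The plan is to exploit the decomposition of $\S$ into the sets $(\s\circ H)^k(\S)$ and track how mass of the approximating measures $\nu_\eps$ distributes among them. Write $\nu_\eps = \sum_{k\ge 0} a_k^\eps$ where $a_k^\eps$ is the restriction of $\nu_\eps$ to the annulus $(\s\circ H)^k(\S)\setminus (\s\circ H)^{k+1}(\S)$, on which $V_u \equiv \alpha(k-1)$. Then $\int V_u\,d\nu_\eps = \alpha\sum_{k\ge0}(k-1)\,\nu_\eps\bigl((\s\circ H)^k(\S)\setminus(\s\circ H)^{k+1}(\S)\bigr)$, and similarly summation by parts gives $\int V_u\,d\nu_\eps = \alpha\sum_{k\ge 1}\nu_\eps\bigl((\s\circ H)^k(\S)\bigr) - \alpha\,\nu_\eps(\S)$, using $V_u|_{(\s\circ H)^0(\S)\setminus(\s\circ H)^1(\S)} = -\alpha$. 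Since $\alpha<0$, the quantity $-\alpha\sum_{k\ge1}\nu_\eps((\s\circ H)^k(\S))$ is a sum of \emph{non-negative} terms, so Fatou's lemma applies to the limit $\eps\to0$.

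First I would record that each set $(\s\circ H)^k(\S)$ is closed (it is a finite union of cylinders, by the description recalled in the proof of Lemma~\ref{lem-vu-inte-pos}), hence by the portmanteau theorem $\limsup_{\eps\to0}\nu_\eps((\s\circ H)^k(\S)) \le \nu((\s\circ H)^k(\S))$ — but I actually want a lower bound, so instead I use that the \emph{interior} is almost all of it, or more robustly: pass to the generating partition and note $\nu_\eps((\s\circ H)^k(\S)) \to \nu((\s\circ H)^k(\S))$ provided $\nu$ gives no mass to the (finite) boundary set $\s^{-1}\{\rho_0,\rho_1\}$ of these cylinder unions — which holds since $\nu$ is non-atomic on the $\mu$-part and $\mu_\K$ is non-atomic. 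Thus $\lim_{\eps\to0}\nu_\eps((\s\circ H)^k(\S)) = \nu((\s\circ H)^k(\S)) = (1-\beta)\mu((\s\circ H)^k(\S)) + \beta\,\mu_\K((\s\circ H)^k(\S))$ for every fixed $k$. By Remark~\ref{rem-measu-discrecylin}, $\mu_\K((\s\circ H)^k(\S)) = 2^{-k}$ exactly.

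Now I apply Fatou to the series with non-negative terms:
$$
\liminf_{\eps\to0}\Bigl(-\alpha\sum_{k\ge1}\nu_\eps((\s\circ H)^k(\S))\Bigr)
\ \ge\ -\alpha\sum_{k\ge1}\lim_{\eps\to0}\nu_\eps((\s\circ H)^k(\S))
\ =\ -\alpha\sum_{k\ge1}\Bigl[(1-\beta)\mu((\s\circ H)^k(\S)) + \beta\,2^{-k}\Bigr].
$$
Adding back the term $-\alpha\,\nu_\eps(\S) = -\alpha$ (which is a continuous functional, so passes to the limit exactly) gives
$$
\liminf_{\eps\to0}\int V_u\,d\nu_\eps \ \ge\ (1-\beta)\Bigl(\alpha\sum_{k\ge1}\mu((\s\circ H)^k(\S)) - \alpha\Bigr) + \beta\Bigl(\alpha\sum_{k\ge1}2^{-k} - \alpha\Bigr)
= (1-\beta)\int V_u\,d\mu + \beta\int V_u\,d\mu_\K.
$$
Since $\int V_u\,d\mu_\K = 0$ by Lemma~\ref{lem-vu-inte-pos}, this is exactly the claimed bound $(1-\beta)\int V_u\,d\mu$.

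**The main obstacle** I anticipate is justifying the exact convergence $\nu_\eps((\s\circ H)^k(\S)) \to \nu((\s\circ H)^k(\S))$ for each individual $k$: one must argue that the relevant cylinder sets are $\nu$-continuity sets, i.e. that $\nu$ charges no point of the finite boundary $\s^{-1}\{\rho_0,\rho_1\}$ of $\bigcup_j \s^j H^k(\S)$. This follows because $\mu$ has $\mu(\K)=0$ and $\rho_0,\rho_1\in\K$ while $\mu_\K$ is non-atomic, so $\nu(\{\s^{-1}\rho_i\})=0$; alternatively one can sidestep atoms entirely by working with the partition into cylinders of a fixed length and noting the sets in question are finite unions of such cylinders, which are simultaneously open and closed, hence $\nu$-continuity sets automatically. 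Once that point is dispatched, everything else is an application of Fatou's lemma to a non-negative series plus the elementary summation-by-parts identity for $V_u$, using the explicit value $\mu_\K((\s\circ H)^k(\S)) = 2^{-k}$ from Remark~\ref{rem-measu-discrecylin}.
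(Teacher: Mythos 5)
Your overall strategy — expand $\int V_u\,d\nu_\eps$ via the nested sets $(\s\circ H)^k(\S)$, use the uniform domination $\nu_\eps\left((\s\circ H)^k(\S)\right)\le 2^{-k}$ from Remark~\ref{rem-measu-discrecylin}, and pass to the limit — is a viable and arguably cleaner alternative to the paper's proof, which truncates $V_u$ outside a cylinder neighborhood $O_\eta$ of $\K$ to get a bounded continuous integrand and then lets $\eta\to0$. However, your execution has two genuine gaps.

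First, the sets $(\s\circ H)^k(\S)$ are \emph{not} clopen and not finite unions of cylinders. For instance $\s\circ H(\S)=\{y: y_{2j-1}=\bar y_{2j}\ \text{for all}\ j\ge 1\}$ is cut out by infinitely many coordinate constraints; it is closed with empty interior, so $\partial\left((\s\circ H)^k(\S)\right)$ equals the set itself, which has positive $\nu$-mass, and the boundary is certainly not the finite set $\s^{-1}\{\rho_0,\rho_1\}$. Hence neither of your two justifications for $\nu_\eps\left((\s\circ H)^k(\S)\right)\to\nu\left((\s\circ H)^k(\S)\right)$ is valid; in fact this full convergence can fail (a sequence of measures weakly converging to a measure charging a closed nowhere-dense set can assign it mass zero). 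All that the portmanteau theorem gives for free, since the sets are closed, is the one-sided bound $\limsup_{\eps\to 0}\nu_\eps\left((\s\circ H)^k(\S)\right)\le\nu\left((\s\circ H)^k(\S)\right)$.

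Second, your Fatou step runs in the wrong direction. Writing $p_k^\eps=\nu_\eps\left((\s\circ H)^k(\S)\right)$ and using your identity $\int V_u\,d\nu_\eps=-\alpha-\left(-\alpha\sum_{k\ge 1}p_k^\eps\right)$, note that $\liminf_\eps\int V_u\,d\nu_\eps=-\alpha-\limsup_\eps\left(-\alpha\sum_{k\ge 1}p_k^\eps\right)$, so the lower bound you want requires an \emph{upper} bound on the $\limsup$ of the non-negative series, not the $\liminf$ lower bound that Fatou supplies. The correct tool is reverse Fatou (dominated convergence for series), available precisely because of the uniform domination $p_k^\eps\le 2^{-k}$ from Remark~\ref{rem-measu-discrecylin}.

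The two fixes are mutually consistent, and together they close the argument: $\limsup_\eps\sum_{k\ge 1}p_k^\eps\le\sum_{k\ge 1}\limsup_\eps p_k^\eps\le\sum_{k\ge 1}\nu\left((\s\circ H)^k(\S)\right)$, and multiplying by $\alpha<0$ reverses the inequality to give $\liminf_\eps\int V_u\,d\nu_\eps\ge\alpha\sum_{k\ge 1}\nu\left((\s\circ H)^k(\S)\right)-\alpha=(1-\beta)\int V_u\,d\mu+\beta\int V_u\,d\mu_\K=(1-\beta)\int V_u\,d\mu$. Note also that your attempt would have produced \emph{equality} rather than an inequality, which should have been a warning sign: the $\liminf\ge$ in the statement is genuinely one-sided because $(\s\circ H)^k(\S)$ is closed but not open.
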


\begin{proof}[Proof of Lemma~\ref{lem-liminf}]
Let us consider an $\eta$-neighborhood $O_{\eta}$ of $\K$ 
consisting of finite
union of cylinders.
Clearly $\left(\s\circ H\right)^{j} \subset O_{\eta}$
for $j = j(\eta) \ge 2$ sufficiently large (and $j(\eta) \to \infty$ as $\eta
\to 0$).

Let $\nu_{\eps}$ be an invariant probability measure. 
Following the same argument as in the proof of Lemma~\ref{lem-vu-inte-pos} and  in particular Remark~\ref{rem-measu-discrecylin}, we claim that 
$$
\int \BBone_{O_{\eta}}V_{u}\,d\nu_{\eps}\ge -\frac\alpha2\nu_{\eps}
\left(O_{\eta}\setminus (\s\circ H)(\S)\right)+\alpha\sum_{k\ge j} k2^{-(k+1)}
$$
holds.
Then we have 
\begin{eqnarray}\label{equ-mino-meas-neighK}
\int V_{u}\,d\nu_{\eps} &\ge& 
\int \BBone_{\S\setminus O_{\eta}}V_{u}\,d\nu_{\eps}-\frac\alpha2\nu_{\eps}(O_{\eta}\setminus (\s\circ H)(\S))+\alpha\sum_{k\ge j}k2^{-(k+1)} \nonumber \\
&\ge& \int \BBone_{\S\setminus O_{\eta}}V_{u}\,d\nu_{\eps}+\alpha\sum_{k\ge j}k2^{-(k+1)}.
\end{eqnarray}
Note that $\BBone_{\S\setminus O_{\eta}}V_{u}$ is a continuous function. 
Thus, $ \lim_{\eps\to 0}\int \BBone_{\S\setminus O_{\eta}}V_{u}\,d\nu_{\eps}$ exists and 
is equal to $\int \BBone_{\S\setminus O_{\eta}}V_{u}\,d\nu=(1-\beta)\int \BBone_{\S\setminus O_{\eta}}V_{u}\,d\mu$. As $\eta \to 0$, this quantity decreases and converges to $(1-\beta)\int V_{u}\,d\mu$ 
(here we use $\mu(\K)=0$).
Therefore, passing to the limit in \eqref{equ-mino-meas-neighK} first in $\eps$ and then in $\eta$ we get 
$$
\liminf_{\eps\to 0}\int V_{u}\,d\nu_{\eps}\ge (1-\beta)\int V_{u}\,d\mu.
$$
\end{proof}

\begin{proof}[Proof of Proposition~\ref{prop-exit-mesdeq-vu}]
We repeat the argument given in the proof of Proposition~\ref{prop-equil-presspos} 
and adapt it as in \cite{jacoisa}. Let $\nu_{\eps}$ be a probability measure such that 
\begin{equation}
\label{equ1-equil-potinfini}
h_{\nu_{\eps}}-\gamma\int V_{u}\,d\nu_{\eps}\ge \CP(\gamma)-\eps,
\end{equation}
and let $\nu$ be any accumulation point of $\nu_{\eps}$. 
As $V_{u}$ is discontinuous we cannot directly pass to the limit 
$\eps \to 0$ and claim that the 
integral of the limit measure is the limit of the integrals.
However, we claim that $V_{u}$ is continuous everywhere but at the four points $0\rho_{0}$, $0\rho_{1}$, $1\rho_{0}$ and $1\rho_{1}$ (see and adapt the proof of Lemma~\ref{lem-sigmacirch}). These points are in $\K$ and their orbits are dense in $\K$. We thus have to consider two cases. 
\begin{itemize}
\item $\nu(\K)=0$. Then a standard argument in measure theory says that we do not see the discontinuity, 
and passing to the limit as $\eps \to 0$ in \eqref{equ1-equil-potinfini},
$$\CP(\gamma)\ge h_{\nu}-\gamma\int V_{u}\,d\nu\ge \CP(\gamma),$$
which means that $\nu$ is an equilibrium state. 
\item $\nu(\K)>0$. In this case we can write $\nu=\beta\mu_{\K}+(1-\beta)\mu$, where $\mu$ is a $\s$-invariant probability satisfying $\mu(\K)=0$ and $\beta$ belongs to $(0,1]$. Therefore
$$h_{\nu}=\beta h_{\mu_{\K}}+(1-\beta)h_{\mu}=(1-\beta)h_{\mu}.$$
\end{itemize}
Lemma~\ref{lem-liminf} yields 
\begin{equation}
\label{equ2-equil-potinfi}
\liminf_{\eps\to 0}\int V_{u}\,d\nu_{\eps}\ge (1-\beta)\int V_{u}\,d\mu.
\end{equation} 
Hence, passing to the limit in Inequality \eqref{equ1-equil-potinfini}, Inequality \eqref{equ2-equil-potinfi} shows that 
$$\CP(\gamma)\le (1-\beta)h_{\mu}-\gamma(1-\beta)\int V_{u}\,d\mu.$$

This last inequality is impossible if $\beta<1$, by definition of the pressure. This yields that $\nu_{\eps}$ converges to $\mu_{\K}$, and $h_{\nu_{\eps}}$ converges to $0$. Then \eqref{equ2-equil-potinfi} shows that $\CP(\gamma)\le 0$. 

On the other hand $\CP(\gamma)\ge 0$ because the pressure is larger than the free energy for $\mu_{\K}$, which is zero.  Therefore $\mu_{\K}$ is an equilibrium state.
\end{proof}

In order to use Proposition~\ref{prop-equil-presspos} we need to check that $V_{u}$ satisfies the hypotheses. 

\begin{lemma}
\label{lem-vu-loc-Bowen}
For every cylinder $J$ which does not intersect $\K$, the potential $V_{u}$ satisfies the local Bowen property \eqref{eq:Bowen}.
\end{lemma}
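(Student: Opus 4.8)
The plan is to describe $V_u$ combinatorially, reduce the Bowen estimate to controlling finitely many ``deep'' indices along the orbit, and then invoke the excursion/accident machinery already in place. \emph{Reduction.} Recall from the proof of Lemma~\ref{lem-vu-inte-pos} that $(\s\circ H)^k(\S)=\s^{2^k-1}(H^k(\S))$; combined with Lemma~\ref{lem-sigmacirch}, this shows any $z\in(\s\circ H)^k(\S)$ shares its first $2^k+1$ symbols with one of the four points $c\rho_b$, $c,b\in\{0,1\}$. Since all prefixes of $\rho_0$ and $\rho_1$ are left-special (this is how property (1) is realised), both $\s$-preimages of $\rho_0$ and of $\rho_1$, i.e.\ all four points $c\rho_b$, lie in $\K$; hence membership in $(\s\circ H)^k(\S)$ forces $d(z,\K)\le 2^{-(2^k+1)}$. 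Writing $k(z):=\max\{k:z\in(\s\circ H)^k(\S)\}$, so $V_u(z)=\alpha(k(z)-1)$, we conclude that $k(z)$ — hence $|V_u(z)|$ — is bounded on any cylinder $J$ with $\delta_J:=d(J,\K)>0$ (set $\ell_J:=-\log_2\delta_J$), that $V_u$ is continuous off $\K$, and that $V_u$ can be large only very close to $\K$. For $x,y\in J$ agreeing on $n:=\tau^k(x)=\tau^k(y)$ symbols, $\s^ix$ and $\s^iy$ agree on $n-i$ symbols, so $\alpha(k(\s^ix)-k(\s^iy))$ can be nonzero only at indices $i$ at which $\s^ix$ is so close to $\K$ that $n-i$ symbols do not determine its depth; call these the \emph{bad} indices.

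\emph{Locating the bad indices.} First I would decompose $x,\s x,\dots,\s^{n-1}x$ into free paths and excursions near $\K$ as in Section~\ref{subsec:counting} (Definition~\ref{def-excursion}). Since $x$ and $y$ agree on $n$ symbols, all of their returns to $J$ before time $n$, all free paths, and every excursion terminating by time $n$ are identical for $x$ and $y$, hence cancel in $S_nV_u(x)-S_nV_u(y)$. Using that $\s$ at most doubles the distance to $\K$ (Lemma~\ref{lem-invertible}), one shows that if $\s^ix$ is within $2^{-(n-i)+c_0}$ of $\K$ — the condition forced by $i$ being bad, for a constant $c_0=c_0(J)$ — then the first return of $x$ to $J$ after time $i$ cannot occur before time $n-\ell_J-c_0$. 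Therefore every bad index lies either in the single inter-return interval containing the instant $n-\ell_J-c_0$, or in the terminal window $[\,n-\ell_J-c_0,\,n\,]$. On the terminal window the depths $k(\s^ix),k(\s^iy)$ are $\le\log_2(2\ell_J+c_0)+O(1)$ (apply the doubling bound forward to time $n$) and there are at most $\ell_J+c_0$ indices, so that block of the sum is $O_J(|\alpha|)$.

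\emph{The distinguished excursion.} It then remains to bound the bad indices inside that one distinguished excursion. The plan is to exploit the ``mountain'' shape of $i\mapsto-\log_2 d(\s^ix,\K)$ within an excursion: it drops by exactly $1$ at each non-accident step and jumps upward only at accidents, whose locations and heights are constrained by Proposition~\ref{prop-time-accident} ($d-b\asymp b$ for the first accident; accidents sit at bispecial words) and by Corollary~\ref{cor-limited-overlap}. Because the excursion must climb back to $J$ before time $n$, the number of peaks at which the orbit is ``as deep as the remaining time $n-i$'' is bounded in terms of $\ell_J$ alone, and at each such index $k(\s^ix)$ and $k(\s^iy)$ differ by at most $O_J(1)$, since both orbits are forced to reach distance $\delta_J$ from $\K$ at the common time $n$, squeezing both depths between $\log_2(n-i)-O_J(1)$ and $\log_2(n-i)+O_J(1)$. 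Summing, the distinguished excursion also contributes $O_J(|\alpha|)$, and \eqref{eq:Bowen} follows with $C_{V_u}=C_{V_u}(J,\alpha)$.

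\emph{Main obstacle.} The hard part will be exactly this last step: proving that along one excursion the orbit can be anomalously deep relative to its remaining lifetime only boundedly often, and that at those moments the depths of $\s^ix$ and $\s^iy$ agree up to $O_J(1)$. The subtlety is that membership in $(\s\circ H)^k(\S)$ genuinely depends on \emph{all} symbols, not just on a prefix, so $k(\s^ix)$ cannot be read off from the first $n-i$ symbols; the argument must use the recognizability of $H$ on $\K$ (equivalently the rigidity of the Thue--Morse block structure, Corollary~\ref{cor-limited-overlap}) to rule out that the not-yet-seen symbols change the depth by more than $O_J(1)$ once the orbit is committed to returning to $J$ at time $n$.
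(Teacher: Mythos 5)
Your plan is much heavier than the paper's argument, which dispatches the lemma in one combinatorial observation and in fact proves the stronger \emph{exact} identity $(S_nV_u)(x)=(S_nV_u)(y)$, not merely a bounded difference. The key point you miss is this: if $J$ is a $k$-cylinder and $n=\tau^k(x)=\tau^k(y)$, then since both $\s^n x$ and $\s^n y$ land in $J$ the sequences actually agree on the first $n+k$ symbols, not just $n$; and since $J\cap\K=\emptyset$, the block $x_n\cdots x_{n+k-1}=y_n\cdots y_{n+k-1}$ is not $\K$-admissible. Now for any $0\le j<n$, the largest $m$ for which $x_{j+1}\cdots x_{j+2^m}$ coincides with a length-$2^m$ prefix of $\rho_0$ or $\rho_1$ must satisfy $j+2^m\le n+k-1$, for otherwise that prefix would contain the non-admissible block as a factor, which is impossible since prefixes of $\rho_0,\rho_1$ are $\K$-admissible. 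This hard cap holds for \emph{every} $j<n$ simultaneously: the relevant depth at position $j$ is read off entirely from symbols on which $x$ and $y$ agree, so $V_u(\s^jx)=V_u(\s^jy)$ for all $j$ and $(S_nV_u)(x)=(S_nV_u)(y)$. There are no ``bad indices'' to locate, no accident/excursion bookkeeping, and no ``distinguished excursion'' to control; the step you flag as ``the hard part'' does not arise.

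Your closing caveat --- that membership in $(\s\circ H)^m(\S)$ depends on all symbols and not on any finite prefix --- is technically correct as stated (these sets are closed but not open, and not finite unions of cylinders), and this is indeed the point on which your proposed route would get stuck. But the paper's proof does not engage with $(\s\circ H)^m(\S)$ as an abstract image set; it computes with the locally constant quantity $m(z):=\max\{m:z_1\cdots z_{2^m}\text{ is a prefix of }\rho_0\text{ or }\rho_1\}$, which is exactly what the cap above controls. Reading $V_u(z)=\alpha(m(z)-1)$, the lemma is immediate. Your excursion-based decomposition, besides being unnecessary, would at best yield $|S_nV_u(x)-S_nV_u(y)|=O_J(|\alpha|)$, is vague about why the depths at the ``bad'' indices agree up to $O_J(1)$, and (as you yourself note) does not actually close.
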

\begin{proof}
Actually, $V_{u}$ satisfies a stronger property: if $x=x_{0}x_{1}\ldots$ and $y=y_{0}y_{1}\ldots$ are in $J$ (a fixed cylinder with $J\cap \K=\emptyset$), if $n$ is their first return time in $J$, and if $x_{k}=y_{k}$ for any 
$0 \le k < n$, then $(S_{n}V_{u})(x)=(S_{n}V_{u})(y)$. 

Assume that $J$ is a $k$-cylinder, and assume without loss of generality that $n > k$. 
The coordinates $x_{j}$ and $y_{j}$ coincide for $0 \le j < n$, but since $J$ is a $k$-cylinder, 
we actually have 
$$x_{j}=y_{j}\text{ for }0\le j\le n+k-1.$$
We recall that $V_{u}$ is constant on sets of the form $(\s\circ H)^{m}(\S)\setminus (\s\circ H)^{m+1}(\K)$. 
Therefore, to compute $V_{u}(z)$ for $z\in\S$ we have to know which
set $(\s\circ H)^{m}(\S)\setminus (\s\circ H)^{m+1}(\K)$ it belongs to. 
Lemma~\ref{lem-sigmacirch} shows that $z=z_{0},z_{1},\ldots$ belongs to $(\s\circ H)^{m}(\S)\setminus (\s\circ H)^{m+1}(\K)$ if and only if $z_{1}\ldots z_{2^{m}}$  coincides with $[\rho_{0}]_{2^{m}}$ or $[\rho_{1}]_{2^{m}}$ and $m$ is the largest integer with this property. 

Let us now study $V_{u}(\s^{j}(x))$ (and $V_{u}(\s^{j}(y))$) for $j$ between 0 and $n-1$. We have to find the largest integer $m$ such that $z_{j+1}\ldots z_{j+1+2^{m}}$ coincides with $[\rho_{0}]_{2^{m}}$ or $[\rho_{1}]_{2^{m}}$.  
As $J$ does not intersect $\K$, the word $x_{n},\ldots ,x_{n+k-1}$ (which is also the word $y_{n},\ldots ,y_{n+k-1}$) is not admissible for $\K$. Therefore, the largest $m$ such that $z_{j+1}\ldots z_{j+1+2^{m}}$ coincides with $[\rho_{0}]_{2^{m}}$ or $[\rho_{1}]_{2^{m}}$ satisfies 
$$2^{m}\le n-j+k-1.$$
In other words, the integer $m$ only depends on the digits where
$\s^{j}(x)$ and $\s^{j}(y)$ coincide. Therefore $V_{u}(\s^{j} (x)) = V_{u}(\s^{j} (y))$. 
\end{proof}

\begin{remark}
An important consequence of Proposition~\ref{prop-exit-mesdeq-vu} 
and Lemma~\ref{lem-vu-loc-Bowen} is that 
the conclusions of Proposition~\ref{prop-equil-presspos} hold.
Although the potential $V_u$ is not continuous
(and in fact undefined at $\s^{-1}(\{ \rho_0, \rho_1\})$), 
it satisfies the local Bowen condition, so that the discontinuity 
is ``invisible'' for the first return map to $J$.
 Proposition~\ref{prop-exit-mesdeq-vu} then implies the existence of an 
equilibrium state.
Furthermore, the critical $z_c(\gamma) \le \CP(\gamma)$. By
a similar argument as used in \cite[Proposition 3.10]{jacoisa}
it can be checked that the conclusion of Lemma~\ref{lem-liminf} holds despite the discontinuity of $V_u$.
$\hfill \blacksquare$\end{remark}

\begin{lemma}
Take $\alpha < 0$ and consider the potential $V_u$ and some cylinder set $J$ disjoint from $\K$.
The critical parameter for the convergence of $(\CL_{z,\gamma}\BBone_{J})(x)$
satisfies
$z_c(\gamma) \ge  2^{-e^{-\gamma \alpha + 2}+1} > 0$ for every 
$\gamma \in \R$ and $x \in J$.
\end{lemma}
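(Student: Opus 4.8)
The plan is to prove the equivalent statement that the power series $z\mapsto(\CL_{z,\gamma}\BBone_{J})(x)$ in $e^{-z}$ \emph{diverges} for every real $z<z^{*}:=2^{-e^{-\gamma\alpha+2}+1}$. By Lemma~\ref{lem-zc} the radius of convergence of this series is $z_{c}(\gamma)$ and is independent of $x\in J$, so this gives $z_{c}(\gamma)\ge z^{*}>0$ at once; moreover, since all terms are non-negative, it suffices to exhibit, for each such $z$, a single sub-family of first-return loops at $x$ whose $\Phi_{z,\gamma}$-weighted count is already infinite. I will take these loops from the cluster expansion \eqref{equ2-clzgam-Ezgam}: loops made of $M$ (with $M\to\infty$) copies of one fixed building block $B=B(\gamma)$, glued by short free pieces, where $B$ performs a single deep excursion shadowing $\K$.

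The block is built using Lemma~\ref{lem-sigmacirch} and Lemma~\ref{lem:disjoint}: fix a depth $k=k(\gamma)$ with $2^{k}\asymp e^{-\gamma\alpha+2}$, let $B$ leave $J$ in boundedly many steps, enter one of the four shells $\s^{2^{k}-1}\circ H^{k}([ab])$ (equivalently a cylinder on which $V_{u}=\alpha(k-1)$, cf.\ \eqref{eq_exampleVu}), follow $\rho_{0}$ or $\rho_{1}$ while $d(\cdot,\K)$ doubles at each iterate — Lemma~\ref{lem-invertible} and Proposition~\ref{prop-time-accident} say precisely when, and how far into a bispecial word, the orbit is forced to keep shadowing — and finally escape and reconnect, again in boundedly many steps, to the entrance of the next block, so that $|B|=:L\asymp 2^{k}$. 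Two properties of $B$ are needed. \emph{(a) Multiplicity:} there are at least two admissible versions of $B$ (e.g.\ which of the four shells is entered, or the reconnecting word), so the $M$-fold concatenation yields at least $2^{M}$ distinct first-return loops of length $\asymp ML$. \emph{(b) Weight:} the Birkhoff sum $S_{L}V_{u}$ along $B$ is controlled. On the visited region $V_{u}$ lies between $\alpha(k-1)$ and $-\alpha$, and — crucially — the excursion shadows $\K$, on which $\int V_{u}\,d\mu_{\K}=0$ by Lemma~\ref{lem-vu-inte-pos}, so along the descent the positive ``level $0$'' and negative ``level $\ge 2$'' contributions of $V_{u}$ very nearly cancel, leaving $|S_{L}V_{u}|$ small; with the chosen $L$ one obtains a bound of the form $e^{-\gamma S_{L}V_{u}}\ge w(\gamma)$ with $\nu(B)\,w(\gamma)>1$, valid for $\gamma$ of either sign (for $\gamma<0$ this is even easier, since $S_{L}V_{u}\ge 0$ forces $e^{-\gamma S_{L}V_{u}}\ge 1$).

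Combining (a) and (b) and discarding the (helpful, for $z\ge 0$) free-piece factors of \eqref{equ2-clzgam-Ezgam}, one gets $(\CL_{z,\gamma}\BBone_{J})(x)\ \ge\ \sum_{M\ge 1}\bigl(\nu(B)\,e^{-\gamma S_{L}V_{u}}\,e^{-Lz}\bigr)^{M}$, a geometric series that diverges whenever $\nu(B)\,e^{-\gamma S_{L}V_{u}}\ge e^{Lz}$; solving this inequality for $z$, with $L\asymp e^{-\gamma\alpha+2}$ and the bound from (b), gives divergence for $z<z^{*}$. The technical heart — the only step that is not bookkeeping — is the uniform weight estimate (b): a lower bound on the $\Phi$-weight of an excursion holding simultaneously for all depths requires the careful enumeration of admissible accident patterns along a near-$\K$ excursion developed in Subsection~\ref{subsec:counting} (in particular Proposition~\ref{prop-time-accident} and Corollary~\ref{cor-limited-overlap}), and it is precisely the interplay between the $\asymp 2^{k}$ length of a depth-$k$ excursion and the only polynomial-in-$k$ size of $S_{L}V_{u}$ that forces the particular double-exponential constant $2^{-e^{-\gamma\alpha+2}+1}$. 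That $V_{u}$ is only locally constant (and undefined on $\s^{-1}\{\rho_{0},\rho_{1}\}$) is harmless here, since the loops stay inside cylinders disjoint from $\K$, where Lemma~\ref{lem-vu-loc-Bowen} provides the local Bowen property and hence all the machinery of Proposition~\ref{prop-equil-presspos} (including $z_{c}(\gamma)\le\CP(\gamma)$, which is what makes this lemma useful).
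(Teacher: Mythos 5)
Your step (b) is the place where the argument breaks, and the failure is not cosmetic. Along a single near-$\K$ excursion the paper computes the Birkhoff sum exactly: for a sub-excursion of length $b = 2^{k+1}-2^{k-i}$ one has $S_b V_u = -\alpha(1+i)$, which for $\alpha<0$ is a \emph{positive} quantity, at least $-\alpha$. There is no ``near cancellation'' making $|S_L V_u|$ small in the sense you need: $e^{-\gamma S_L V_u}\le e^{\gamma\alpha}$ decays exponentially in $\gamma$. Your multiplicity factor $\nu(B)$ is bounded (you claim multiplicity $\ge 2$; even allowing all accident patterns of a depth-$k$ excursion the count is only $O(k)$), so $\nu(B)\,e^{-\gamma S_L V_u}<1$ already for moderate $\gamma>0$, and your geometric series $\sum_M\bigl(\nu(B)e^{-\gamma S_L V_u}e^{-Lz}\bigr)^M$ \emph{converges} for every $z\ge 0$. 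The claim $\nu(B)w(\gamma)>1$ is simply false in the regime $\gamma>0$, which is the case the lemma is about. (Your remark that for $\gamma<0$ things are easier is correct but beside the point.) Put differently: a fixed block $B$ of fixed excursion depth $k(\gamma)$, repeated $M$ times, cannot witness divergence for all $\gamma$, because the per-block weight is bounded above by $C\,e^{\gamma\alpha}\to 0$.

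What the paper actually exploits is that the cluster weight $A$ is a \emph{sum over a range of excursion depths}, not a single block raised to a power. Writing $A \ge e^{\gamma\alpha}\sum_{k\ge 1}e^{-2^{k+1}z}$ (using $\sum_{i=0}^{k-1}e^{i\gamma\alpha}\ge 1$ for all $\gamma$), and choosing $z=2^{-(M+1)}$, every one of the first $M$ terms is $\ge e^{-1}$, so $A\ge M\,e^{\gamma\alpha-1}$. The factor of $M$ — the number of admissible depths contributing simultaneously — is exactly what compensates the tiny $e^{\gamma\alpha}$, and it is the requirement $M\ge e^{-\gamma\alpha+2}$ together with $z=2^{-(M+1)}$ that produces the double-exponential threshold. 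Your plan, even if (b) held, would only give a bound of the form $z_c\gtrsim L^{-1}\log(\nu(B)\,w(\gamma))$ with $L\asymp e^{-\gamma\alpha}$, i.e.\ singly exponential, which is \emph{not} what the stated constant says; this mismatch is itself a signal that the sum-over-depths mechanism is missing from your construction. To repair the proof, replace the single block $B$ by the family $\{B_k\}_{k\le M}$ and sum their weights inside the cluster factor before raising to the power $M$ — at which point you are essentially reproducing the paper's computation of $A$ and the choice $z=2^{-(M+1)}$, $M=\lceil e^{-\gamma\alpha+2}\rceil$.
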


\begin{proof}
We now explore the thermodynamic formalism of the unbounded fixed point $V_u$ 
of $\CR$ given by Equation~\ref{eq_exampleVu}.
This potential is piecewise constant, and the value on cylinder sets 
intersecting $\K$ can be pictured 
schematically (with $\alpha = -1$) as follows:
$$
\begin{array}{r|ccccccccccccccccc}
\rho_0 & 1 & 0 & 0 & 1 & 0 & 1 & 1 & 0 & 0 & 1 & 1 & 0 & 1 & 0 & 0 & 1 &\cdots \\
\rho_1 & 0 & 1 & 1 & 0 & 1 & 0 & 0 & 1 & 1 & 0 & 0 & 1 & 0 & 1 & 1 & 0  &\cdots \\
\hline
V_u & 1 & 0 & 1 & -1 & 1 & 0 & 1 & -2 & 1 & 0 & 1 & -1 & 1 & 0 & 1 & -3  &\cdots 
\end{array} 
$$
Here, the third line indicates the value of $V_u$ at $\s^n(\rho_j)$ for $n = 0,1,2,3,\dots$ and $j = 0,1$.
A single ergodic sum of length $b = 2^{k+1}-2^{k-i}$% as in Lemma~\ref{lem:b_d}
(with $\alpha < 0$ arbitrary again)
for points $x$ in the same cylinder as $\rho_0$ or $\rho_1$ is
$$
(S_bV_u)(x) = \sum_{j=0}^{2^{k+1}-2^{k-i}-1} V_u(\s^j(x)) = -\alpha(1+i).
$$
Therefore, the contribution of a single excursion% from Lemma~\ref{lem:b_d}
 is 
$$
\Phi_{z,\gamma}(\xi) = \sum_{j=1}^N \sum_{k=0}^{b_j-1}
\gamma \alpha (1-i_j) - b_jz
$$
where $i = i_j$ is such that $b_j = 2^{k+1}-2^{k-i}$.
The contribution to $(\CL_{z,\gamma}\BBone_{J})(x)$ 
of one cluster of excursions then becomes
$E_{z,\gamma}(\xi) \ge \sum_{N \ge 1} A^N$, where (assuming that $z \ge 0$)
\begin{eqnarray*}
A = \sum_{\stackrel{\text{\tiny allowed}}{b \ge 1}}  
e^{\gamma \alpha(1+i) - bz} 
&=& \sum_{k \ge 1}  \sum_{i=0}^{k-1} 
e^{\gamma \alpha(1+i) - (2^{k+1}-2^{k-i})z}\\
&\ge& e^{\gamma \alpha} \sum_{k \ge 1}  \sum_{i=0}^{k-1} 
e^{i \gamma \alpha - 2^{k+1}z}\\
&=& e^{\gamma \alpha}
\sum_{k \ge 1} \frac{1-e^{\gamma \alpha k}}{1-e^{\gamma \alpha}}  e^{- 2^{k+1}z}
\ge e^{\gamma \alpha} \sum_{k \ge 1} e^{- 2^{k+1}z}
\end{eqnarray*}
Take an integer $M \ge e^{-\gamma \alpha +2}$ and $z = 2^{-(M+1)}$.
Then taking only the $M$ first terms of the above sum, we get the the entire sum
is larger than 
$$
e^{\gamma \alpha}  M e^{-2^{M+1} z} \ge e^{\gamma \alpha} 
e^{-\gamma \alpha + 2} e^{-1} = e > 1.
$$
Therefore, 
$A > 1$ and $\sum A^N$ diverges. Hence, the critical $z_c(\gamma) \ge 2^{-e^{-\gamma \alpha + 2}+1} > 0$ for all 
$\gamma > 0$.
\end{proof}

\begin{proof}[Proof of Theorem~\ref{theo-thermo-Vu}]
It is just a consequence of Proposition~\ref{prop-equil-presspos} that a phase transition can only occur at the zero pressure. This never happens, hence the pressure is analytic on $[0, \infty)$ and there is a unique equilibrium state for $-\gamma V_{u}$. 
\end{proof}

%%%%%%%%%%%%%%%%%%%%%%%%%%%%%%%%%%
\section*{Appendix: The Thue-Morse subshift and the Feigenbaum map}\label{appendix}

%%%%%%%%%%%%%%%%%%%%%%%%%%%%%%%%%%

The logistic Feigenbaum map $f_\qfeig:I \to I$ is conjugate
to unimodal interval map $f_\cfeig$,
which solves a renormalization equation 
\begin{equation}\label{eq_renorm_feig}
f_\cfeig^2 \circ \Psi(x) = \Psi \circ f_\cfeig(x),
\end{equation}
for all $x \in I$, where $\Psi$ is an affine contraction depending on 
$f_\cfeig$.
Note that $f_\cfeig$ is not a quadratic map, but it has a quadratic critical point $c$. See \cite{Epstein} and \cite[Chapter VI]{MSbook} for an extensive survey.

As a result of \eqref{eq_renorm_feig}, 
$f_\cfeig$ is infinitely renormalizable of Feigenbaum type, \ie 
there is a nested sequence $M_k$ of 
periodic cycles of $2^k$-periodic intervals such that each component
of $M_k$ contains two components of $M_{k+1}$.
The intersection $\CA := \cap_{k \ge 0} M_k$ is a Cantor attractor
on which $f_\cfeig$ acts as a dyadic adding machine.
The renormalization scaling $\Psi:M_k \to M_{k+1}^\crit$, 
where $M_k^\crit$ is the component of $M_k$ containing the critical point, and 
on each $M_k^\crit$ we have
$f_\cfeig^{2^{k+1}} \circ \Psi = \Psi \circ f_\cfeig^{2^k}$.

Furthermore, $\CA$ coincides with the critical $\omega$-limit set $\omega(c)$
and it attracts every point in $I$ except for countably many (pre-)periodic
points of (eventual) period $2^k$ for some $k \ge 0$.
Hence $f_\cfeig:I \to I$ has zero entropy, and the only
probability measures it preserves are
Dirac measures on periodic orbits and a unique measure on $\CA$.
This means that $f_\cfeig:I \to I$ is not very interesting
from a thermodynamic point of view.
However, we can extend $f_\cfeig$ to a quadratic-like map on the 
complex domain, with a chaotic Julia set $\CJ$ supporting topological 
entropy $\log 2$, and its dynamics is a finite-to-one quotient of the full 
two-shift $(\S, \sigma)$.
Equation \eqref{eq_renorm_feig} still holds for the complexification
 $f_\cfeig:U_0 \to V_0$ (a quadratic-like map, to be precise), 
where $\Psi$ is a linear holomorphic 
contraction, and $U_0 \Subset V_0$ are open domains in $\C$ such that
$U_0$ contains the unit interval.
Renormalization in the complex domain thus means that $M_1^\crit$ 
extends to a disks
$U_1 \Subset V_1$ and $f_\cfeig^2:U_1 \to V_1$ is a two-fold branched cover
with branch-point $c$. The {\em little Julia set} 
$$
\CJ_1 = \{ z \in U_1 : f_\cfeig^{2n}(z) \in U_1 \text{ for all } n \ge 0\}
$$ 
is a homeomorphic
copy under $\Psi$ of the entire Julia set $\CJ$, but it should be noted
that most points in $U_1$ eventually leave $U_1$ under iteration of $f_\cfeig^2$:
$U_1$ is not a periodic disk, only the real trace $M_1^\crit = U_1 \cap \R$
is $2$-periodic.
The same structure is found at all scales:
$M_k^\crit = U_k \cap \R$, $U_k \Subset V_k$ and $f_\cfeig^{2^k}:U_k \to V_k$
is a two-fold covering map with little Julia set
$$\CJ_k := 
\{ z \in U_1 : f_\cfeig^{2^kn}(z) \in U_1 \text{ for all } n \ge 0\}
= \Psi(\CJ_{k-1}).
$$
To explain the connection between $f_\cfeig:\CJ \to \CJ$ and symbolic dynamics,
we first observe that the {\em kneading sequence} $\rho$
(\ie the itinerary of the critical value $f_\cfeig(c)$)
is the fixed point of a substitution
$$
H_\feig:\left\{ \begin{array}{l} 0 \to 11, \\ 1 \to 10. \end{array} \right.
$$
Let $\Sigma_\feig = \overline{\orb_\s(\rho)}$ be the corresponding shift space.
If we quotient over the equivalence relation
$x \sim y$ if $x = y$ or $x = w0\rho$ and $y = w1\rho$ (or vice versa) for any
finite and possibly empty word $w$, then 
$\Sigma_\feig \sim$ is homeomorphic to $\CA$, and the itinerary
map $i:\CA \to \Sigma_\feig/\sim$ conjugates $f_\cfeig$ to the shift $\s$.

To make the connection with the Thue-Morse shift, observe that 
the sliding block code $\pi: \S \to \S$ defined by
$$
\pi(x)_k = \left\{ \begin{array}{ll}
1 & \text{ if } x_k \neq x_{k+1}, \\
0 & \text{ if } x_k = x_{k+1},
\end{array} \right.
$$
is a continuous shift-commuting two-to-one covering map.
The fact that it is two-to-one is easily seen because if $x_k = 1-y_k$
for all $k$,
then $\pi(x) = \pi(y)$. Surjectivity can also easily be proved;
once the first digit of $\pi^{-1}(z)$ is chosen, the following digits
are all uniquely determined.
It also transforms the Thue-Morse substitution $H$ into 
$H_\feig$ in the sense that $H_\feig \circ \pi = \pi \circ H$.
For the two Thue-Morse fixed points of $H$ we obtain 
$$
\pi(\rho_0) = \pi(\rho_0) = \rho = 1011101010111011 1011101010111010\dots
$$
Figure~\ref{fig:Thue_Morse_Feigenbaum} summarizes all this in a single 
commutative diagram.

\begin{figure}[ht]
\unitlength=6mm
\begin{picture}(18,14)(-1,0)
\put(-2.8,12){$\rho_0, \rho_1 \in \K \subset \L \subset \S$}
\put(3.5, 12){\vector(1,0){3}}
\put(7.2,12){$\S$}\put(4.5,12.5){$H$}
\put(10,12.5){Thue-Morse subst. 
$H: \left\{ \begin{array}{l} 0 \to 01 \\ 1 \to 10 \end{array} \right.$}
\put(2.5, 11){\vector(0,-1){1.7}}\put(7.5, 11){\vector(0,-1){1.7}}
\put(1.5,10.5){$\pi$}\put(8,10.5){$\pi$}
\put(10,10.5){$\pi(x)_k = \left\{ \begin{array}{ll} 1 & \text{ if } x_k \neq x_{k+1} \\  0 & \text{ if } x_k = x_{k+1} \end{array} \right.$}
\put(-3.2,8){$\rho \in \Sigma_\feig \subset \pi(\L) \subset \S$}
\put(3.5, 8){\vector(1,0){3}}
\put(7.2,8){$\S$}\put(4.5,8.5){$H_\feig$}
\put(10,8.5){Feigenbaum subst.
$H_\feig: \left\{ \begin{array}{l} 0 \to 11 \\ 1 \to 10 \end{array} \right.$}
\put(2.5, 7.5){\vector(0,-1){1.7}}\put(7.5, 7.5){\vector(0,-1){1.7}}
\put(1.5,6.7){$\sim$}\put(8,6.7){$\sim$}
\put(10,6.7){Equivalence relation $w1\rho \sim w0\rho$}
\put(1.25,5){$\S/\sim$} \put(3.5, 5){\vector(1,0){3}}
\put(7,5){$\S/\sim$}\put(4.5,5.5){$H_\feig$}
\put(2.5, 2.75){\vector(0,1){1.7}}\put(7.5, 2.75){\vector(0,1){1.7}}
\put(1.6,3.5){$i$}\put(8,3.4){$i$}
\put(10,4.2){$i$ is the itinerary map}
\put(-4.4,2){$f_\cfeig(0) \in \CA \subset [c_2, c_1] \subset \CJ$} \put(3.5, 2){\vector(1,0){3}}
\put(7.2,2){$\CJ_1$}\put(4.5,2.5){$\Psi$}
\put(10,2.5){$\Psi: \CJ \to \CJ_1$ is renormalization scaling}
\put(10,1.6){$\CJ_1$is  the first little Julia set}
\put(10,0.7){Feigenbaum map $f_\cfeig$ on Julia set $\CJ$}
\end{picture}
\caption{Commutative diagram linking the Thue-Morse substitution shift to the
Feigenbaum map. Further commutative relations:\newline
$\pi$ is continuous, two-to-one and $\s \circ \pi = \pi \circ \s$.\newline
$i:[c_2,c_1] \to \pi(\L)/\sim$ is a homeomorphism and $\s \circ i = i \circ f_\cfeig$.\newline
$\s^2 \circ H = H \circ \s$, \  
$\s^2 \circ H_\feig = H_\feig \circ \s$ and $f_\cfeig^2 \circ \psi = \psi \circ f_\cfeig$.}
\label{fig:Thue_Morse_Feigenbaum}
\end{figure}

The Cantor set $\K$ factorizes over $\Sigma_\feig$ and hence over 
the Cantor attractor $\CA$. The intermediate space $\L$ factorizes over 
the real {\em core} $[c_2, c_1]$ in the Julia set $\CJ$ and we can 
characterize its symbolic dynamics by means of a particular order relation.
Namely, itineraries $i(z)$ of $z \in [c_2, c_1]$ are exactly those sequences that satisfy
$$
\s(\rho) \le_{pl} \s^n \circ i(z) \le_{pl} \rho
\quad \text{ for all } n \ge 0.
$$
Here $\le_{pl}$ is the {\em parity-lexicographical order} by which
$z <_{pl} z'$ if and only if there is a prefix $w$ such that 
$$
\left\{ \begin{array}{l}
z = w0\dots,\quad  z' = w1\dots \text{ and } w \text{ contains an even number of $1$s,}\\[2mm]
z = w1\dots,\quad  z' = w0\dots \text{ and } w \text{ contains an odd number of $1$s.}\end{array} \right.
$$
On the level of itineraries, the substitution $H_\feig$ plays the role
of the conjugacy $\Psi$:
$$
i \circ \Psi(x) = H_\feig \circ i(x) \qquad \text{ for all } x \in [c_2, c_1].
$$
Also let $\le_l$ denote the usual lexicographical order.
\begin{lemma}
Let $[0]$ and $[1]$ denote the one-cylinders of $\S$.
The map $\pi : ([0], \le_l) \to (\S, \le_{pl})$
is order preserving and $\pi : ([1], \le_l) \to (\S, \le_{pl})$ is order reversing.
\end{lemma}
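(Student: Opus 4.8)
The plan is to take $x,y$ in a common one-cylinder with $x<_l y$, locate their first point of disagreement, and see directly how the sliding block code $\pi$ converts this into a comparison for $\le_{pl}$.

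First I would fix notation. Say $x,y\in[i]$ with $i\in\{0,1\}$ and $x<_l y$, and let $n\ge 1$ be minimal with $x_n\neq y_n$. Then $x_k=y_k$ for $k<n$, and since $x<_l y$ we have $x_n=0$ and $y_n=1$. Because $\pi(x)_k$ depends only on the two coordinates $x_k,x_{k+1}$, the sequences $\pi(x)$ and $\pi(y)$ agree at every coordinate $k\le n-2$; denote by $w:=\pi(x)_0\cdots\pi(x)_{n-2}$ this common prefix, a word of length $n-1$. At coordinate $n-1$, using $x_{n-1}=y_{n-1}$, we have $\pi(x)_{n-1}=1$ iff $x_{n-1}\neq x_n=0$, whereas $\pi(y)_{n-1}=1$ iff $x_{n-1}\neq y_n=1$; these two bits are opposite, so $\pi(x)$ and $\pi(y)$ first disagree precisely at position $n-1$, and which of the two carries a $0$ there is determined by the common bit $x_{n-1}$: if $x_{n-1}=0$ then $\pi(x)=w0\ldots$ and $\pi(y)=w1\ldots$, while if $x_{n-1}=1$ then $\pi(x)=w1\ldots$ and $\pi(y)=w0\ldots$.

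The heart of the argument is to determine the parity of the number of $1$'s in $w$. Since $\pi(x)_k=1$ exactly when $x$ changes letter between positions $k$ and $k+1$, that number equals the number of letter changes in the word $x_0x_1\cdots x_{n-1}$, and a one-line induction on $n$ shows it is congruent to $x_0\oplus x_{n-1}$ modulo $2$, i.e.\ to $i\oplus x_{n-1}$. I would state and prove this parity identity as a small preliminary observation, and the lemma then follows by inspecting four cases. For $i=0$: if $x_{n-1}=0$, then $w$ contains an even number of $1$'s and $\pi(x)=w0\ldots$, $\pi(y)=w1\ldots$, so $\pi(x)<_{pl}\pi(y)$ by the first clause of the definition of $<_{pl}$; if $x_{n-1}=1$, then $w$ contains an odd number of $1$'s and $\pi(x)=w1\ldots$, $\pi(y)=w0\ldots$, so $\pi(x)<_{pl}\pi(y)$ by the second clause. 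In both subcases $\pi(x)<_{pl}\pi(y)$, so $\pi|_{[0]}$ is order preserving. For $i=1$ the parity of $w$ is flipped in each subcase relative to the $i=0$ situation, which exactly reverses which of $w0\ldots$, $w1\ldots$ is $<_{pl}$-smaller, giving $\pi(y)<_{pl}\pi(x)$; hence $\pi|_{[1]}$ is order reversing.

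There is no real obstacle here: all the content is the bookkeeping just described, namely aligning ``number of $1$'s in the common prefix $w$'' with the ``even/odd number of $1$s'' conditions in the definition of $\le_{pl}$, and being careful with the off-by-one between the $n-1$ letters $\pi(x)_0,\dots,\pi(x)_{n-2}$ and the letter changes of $x_0\cdots x_{n-1}$. Isolating the parity identity as a separate claim makes the remaining argument a routine four-case check.
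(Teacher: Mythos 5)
Your proof is correct. The overall plan coincides with the paper's: look at the first index where $x$ and $y$ disagree, observe that the sliding block code $\pi$ forces $\pi(x)$ and $\pi(y)$ to first disagree one position earlier, and then control the parity of $1$'s in the common prefix of $\pi(x)$ and $\pi(y)$ to decide the $\le_{pl}$-comparison. Where you differ is in how the parity is established: the paper starts from the extremal word $0^n$ (resp.\ $10^{n-1}$), verifies the inequality there, and then argues that flipping a digit $x_k$ for $k\ge 2$ changes two adjacent digits of $\pi(x)$ and so leaves the parity of $1$'s in the prefix unchanged; you instead compute the parity in closed form as $x_0\oplus x_{n-1}=i\oplus x_{n-1}$ (the telescoping count of letter changes) and then run a clean four-case check. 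Your route is a bit more self-contained and avoids the perturbative ``remains valid'' step, which is the one slightly delicate point in the paper's version; the paper's route is shorter to write but leaves the parity-preservation claim to the reader. Both are fine; isolating the identity ``number of $1$'s in $\pi(x)_0\cdots\pi(x)_{n-2}\equiv x_0\oplus x_{n-1}\pmod 2$'' as a preliminary observation, as you do, is a genuine clarification.
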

\begin{proof}
First we consider $[0]$ and let $w = 0^n$, then $w0\dots <_l w1\dots$ and
\begin{equation}\label{eq:0w}
\pi(w0\dots) = 0^n\dots \le_{pl}  0^{n-1}1\dots = \pi(w1\dots).
\end{equation}
Now if we change the $k$-th digit in $w$ (for $k \ge 2$), 
then still $w0 <_l w1$
and both the $k$-th and $k-1$-st digit of $\pi(w\dots)$ change. 
This does not affect the parity of $1$s in $\pi(w)$ and so \eqref{eq:0w}
remains valid. Repeating this argument, we obtain that $\pi$ is 
order-preserving for all words $w$ starting with $0$.

Now for the cylinder $[1]$ and $w = 10^{n-1}$, we find 
$w0\dots <_l w1\dots$ and
\begin{equation*}\label{eq:1w}
\pi(w0\dots) = 10^{n-1}\dots \ge_{pl}  10^{n-2}1\dots = \pi(w1\dots).
\end{equation*}
The same argument shows that $\pi$ reverses order for all words $w$ 
starting with $1$.
\end{proof}

This lemma shows that $\pi^{-1} \circ i([c_2, c_1])$ consists of 
the sequence $s$ such that for all $n$,
$$
\left\{ \begin{array}{rcccll}
\s(\rho_1)  &\le_{l} & \s^n(s) & \le_{l} & \rho_1 & \text{ if } 
\s^n(s) \text{ starts with } 1, \\[2mm]
\rho_0 & \le_{l} & \s^n(s) & \le_{l} & \s(\rho_0) & \text{ if } 
\s^n(s) \text{ starts with } 0.
\end{array} \right.
$$
However, the class of sequence carries no shift-invariant measures
of positive entropy, and the thermodynamic formalism reduces to finding
measures that maximize the potential. The measure supported furthest away
from $\K$ is the Dirac measure on $\overline{01}$ 
(with $\pi(\overline{01}) = \overline{1}$).

Instead, if we look at the entire Julia set $\CJ$,
the combination of $\pi$ and the quotient map do not decrease
entropy, and the potential
$-\log |f'_\cfeig|$ has thermodynamic interest for 
the complexified Feigenbaum map $f_\cfeig:\CJ \to \CJ$.
Since $\Psi$ is affine, differentiating \eqref{eq_renorm_feig}
and taking logarithms, we find that
$$
\CR_\feig(\log |f'_\feig|) := \log |f'_\feig|  \circ f_\cfeig \circ \psi + \log |f'_\cfeig| \circ \Psi = \log |f'_\cfeig|,
$$
so $V_\feig := \log |f'_\cfeig|$ is a fixed point of the renormalization
operator $\CR_\feig$ mimicking $\CR$.
Furthermore, since $U_k = \Psi^{k-1}(U_1)$,
its size is exponentially small in $k$ and hence there is some fixed
$\alpha < 0$ such that
 $V_\cfeig \approx \alpha(k-1)$ on $U_k \setminus U_{k+1}$.
Since $U_k \setminus U_{k+1}$ corresponds
to the cylinder $(\s \circ H)^{k-1} \setminus (\s \circ H)^k$, the potential
$V_u$ from Section~\ref{subsec-unbounded1} is comparable to $V_\cfeig$.
As shown in Section~\ref{subsec-unbounded2}, $V_u$ exhibits no phase 
transition.

The following proposition for complex analytic maps is stated in general 
terms, but proves the phase transition of Feigenbaum maps in particular.

\begin{proposition}\label{prop:feig_PT}
Let $f:\C \to \C$ be an $n$-covering map without parabolic periodic points
such that the omega-limit set $\omega(\Crit)$ of the critical set
is nowhere dense in its Julia set $\CJ$, and such that
there is some $c \in \Crit$ such that $f:\omega(c) \to \omega(c)$ 
has zero entropy and Lyapunov exponent.
Then for $\phi = \log|f'|$ and every $\gamma > 2$, $\CP(-\gamma \phi) = 0$.
\end{proposition}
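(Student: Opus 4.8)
Throughout, $\CP(-\gamma\phi)$ is understood as the variational quantity
$\sup_{\mu}\bigl(h_{\mu}(f)-\gamma\int\phi\,d\mu\bigr)$, the supremum running over
$f$-invariant Borel probability measures carried by the (compact) Julia set $\CJ$, and
$\chi_{\mu}:=\int\phi\,d\mu=\int\log|f'|\,d\mu$ denotes the Lyapunov exponent of $\mu$.
The plan is to establish the two inequalities $\CP(-\gamma\phi)\ge 0$ and $\CP(-\gamma\phi)\le 0$ for $\gamma\ge 2$, which together give the claim (and a fortiori for $\gamma>2$).

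\emph{Lower bound.} This is the only place the hypothesis on the distinguished critical point $c$ is used. Since $\omega(c)$ is compact and forward invariant it carries an $f$-invariant probability measure $\mu_{c}$, and by assumption $h_{\mu_{c}}(f)=0$ and $\chi_{\mu_{c}}=0$. Hence $\CP(-\gamma\phi)\ge h_{\mu_{c}}(f)-\gamma\,\chi_{\mu_{c}}=0$ for \emph{every} $\gamma$; in particular $\CP(-\gamma\phi)$ is never negative, so it will be enough to prove $\CP(-2\phi)\le 0$ together with the monotonicity of $\gamma\mapsto\CP(-\gamma\phi)$.

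\emph{Upper bound.} Fix an $f$-invariant probability measure $\mu$ on $\CJ$. First I would record that $\mu$ gives no mass to the critical set: no critical point of $f$ lying in $\CJ$ is periodic (a super-attracting cycle belongs to the Fatou set), and an elementary invariance/counting argument then shows that an invariant measure cannot put positive mass on such a (necessarily pre-periodic or aperiodic) critical point; consequently $\chi_{\mu}\in(-\infty,+\infty]$ is well defined. Two classical inputs then close the estimate: (i) by the non-negativity of Lyapunov exponents of invariant measures of rational maps (a theorem of Przytycki), $\chi_{\mu}\ge 0$; (ii) by the Ruelle inequality applied to $f$, a conformal self-map of the real surface $\CJ\subset\hat\C$, both real Lyapunov exponents of $\mu$ equal $\chi_{\mu}$, so that $h_{\mu}(f)\le 2\chi_{\mu}$. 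Combining (i)--(ii), $h_{\mu}(f)-\gamma\chi_{\mu}\le(2-\gamma)\chi_{\mu}\le 0$ for every $\gamma\ge 2$; taking the supremum over $\mu$ gives $\CP(-\gamma\phi)\le 0$. Moreover (i) shows that for each $\mu$ the function $\gamma\mapsto h_{\mu}(f)-\gamma\chi_{\mu}$ is non-increasing, hence so is $\gamma\mapsto\CP(-\gamma\phi)$, as required. Together with the lower bound this yields $\CP(-\gamma\phi)=0$ for all $\gamma\ge 2$.

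\emph{Where the hypotheses enter, and the main obstacle.} The absence of parabolic periodic points and the nowhere-density of $\omega(\Crit)$ in $\CJ$ are what make the preceding argument legitimate: they put $f$ in the class for which the geometric pressure is well behaved (finite, and equal to the variational expression above despite the fact that $-\gamma\log|f'|$ is genuinely unbounded above near $\Crit$), they rule out parabolic-implosion phenomena and critical orbits that fill $\CJ$, and they are the standing hypotheses under which the non-negativity-of-exponents input is applied without extra care. The one genuinely non-routine step is therefore to invoke the correct form of Przytycki's non-negativity theorem and of the holomorphic Ruelle inequality in this (non-hyperbolic) setting; the remainder is bookkeeping. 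Finally, one checks that the Feigenbaum map $f_{\cfeig}$ of the Appendix meets all the hypotheses: $\omega(c)=\CA$ is a Cantor set, hence nowhere dense in $\CJ$; $f_{\cfeig}$ has no parabolic cycles; and on $\CA$ the dynamics is the dyadic adding machine, which has zero entropy, while the a priori bounds of Feigenbaum renormalization force $|(f_{\cfeig}^{n})'|$ to grow sub-exponentially along the critical orbit, so the Lyapunov exponent on $\CA$ vanishes. Hence $\CP(-\gamma\log|f'_{\cfeig}|)=0$ for every $\gamma>2$, as used before Theorem~\ref{theo-thermo-Vu}.
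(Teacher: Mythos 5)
Your proof is correct in substance, but it takes a genuinely different route from the paper's. The paper proves the upper bound $\CP(-\gamma\phi)\le 0$ by a direct transfer-operator estimate: choosing a cylinder $U$ intersecting $\CJ$ but bounded away from $\orb(\Crit)$, so small that the Koebe distortion constant $K$ of the holomorphic extensions of inverse branches of the first-return map satisfies $K^{\gamma-1}<\lambda_0^{\gamma-2}$ (here $\lambda_0>1$ is the minimal periodic multiplier). Since each return domain $U_i$ contains a repelling $\tau_i$-periodic point and the $U_i$ are disjoint, the sum $\sum_i|F'|^{-\gamma}\le K^{\gamma-1}\lambda_0^{-(\gamma-2)}\sum_i\area(U_i)/\area(U)\le 1$, whence the local transfer operator has spectral radius $\le 1$ and Abramov's formula closes the estimate. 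This is a hands-on, self-contained argument which fits the inducing framework used throughout Section~3, and it incidentally explains why the paper fixes $\gamma>2$ strictly (at $\gamma=2$ the required inequality becomes $K<1$, which fails). You instead combine two soft classical inputs---Przytycki's non-negativity of Lyapunov exponents of invariant measures, and Ruelle's inequality $h_\mu\le 2\chi_\mu$ for a conformal map of a real $2$-manifold---to get $h_\mu-\gamma\chi_\mu\le(2-\gamma)\chi_\mu\le 0$ directly. Your route is shorter, more conceptual, and even yields the endpoint $\gamma=2$.

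One point you should address to make the argument airtight: you invoke Przytycki's theorem ``for rational maps,'' but the proposition is being applied to the complexified Feigenbaum map $f_{\cfeig}$, which is a polynomial-like map $U_0\to V_0$, not a globally defined rational map of $\hat\C$ (and the blanket hypothesis ``$n$-covering map $\C\to\C$'' is itself somewhat loosely phrased in the paper for exactly this reason). The non-negativity of Lyapunov exponents does extend to polynomial-like and more general holomorphic settings, but that extension needs to be cited explicitly, as it is the load-bearing step in your upper bound; without it, $\chi_\mu$ could a priori be negative or $-\infty$ and $-\gamma\chi_\mu$ could be $+\infty$. Similarly, one should note that Ruelle's inequality applies here because $\CJ$ is a compact forward-invariant set on which $f$ is $C^1$, even though $f$ is not globally defined. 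Your observation that $\mu$ gives no mass to critical points is correct but not quite sufficient on its own; it is Przytycki's bound, not the absence of atoms at $\Crit$, that controls the contribution near $\Crit$. Finally, the monotonicity remark in your lower-bound paragraph is unnecessary: once $\chi_\mu\ge 0$, the bound $(2-\gamma)\chi_\mu\le 0$ works separately for each $\gamma\ge 2$, so you never need to pass through $\gamma=2$.
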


\begin{proof}
As $f$ has no parabolic points, $\lambda_0 := 
\inf\{ |(f^n)'(p)| \ : \ p \in \CJ \text{ is an $n$-periodic point} \} > 1$.
Obviously, all the invariant measures $\mu$ supported on $\omega(c)$ 
have $h_{\mu} -\gamma \int \log |f'| d\mu = 0$, so $\CP(-\gamma \phi) \ge 0$.

To prove the other inequality, we fix $\gamma > 2$ and for some $f$-invariant measure $\mu$, we choose a neighborhood $U$ intersecting $\CJ$
but bounded away from $\orb(\Crit)$ such that $\mu(U) > 0$.
We can choose $\diam(U)$ so small compared to the distance $d(\orb(\Crit), U)$
that $K^{\gamma-1} < \lambda_0^{\gamma-2}$ where $K$ is the distortion constant
in the Koebe Lemma, see \cite[Theorem 1.3]{Pomm}. Since $K \to 1$ as 
$\kappa := \diam(U)/d(\orb(\Crit), U) \to 0$, 
we can satisfy the condition on $K$ by choosing $U$ small enough.

Let $F:\cup_i U_i \to U$ be the first return map to $U$. 
Each branch $F|_{U_i} =  f^{\tau_i}|_{U_i}$, with first return time 
$\tau_i > 0$
 can be extended holomorphically to $f^{\tau_i}:V_i \to f^{\tau_i}(U_i)$
where $f^{\tau_i}(V_i)$ contains a disc around $f^{\tau_i}(U_i)$
of diameter $\ge d(\orb(\Crit), U) \ge \diam(f^{\tau_i}(U_i))/\kappa$.
Hence the Koebe Lemma implies that the distortion of $f^{\tau_i}|_{U_i}$ is 
bounded by $K = K(\kappa)$.
Furthermore, since each $U_i$ contains a $\tau_i$-periodic point
of multiplier $\ge \lambda_0$, we have $\diam(U_i)/\diam(U) \le K/\lambda_0$. Therefore, for any $x \in U$,
\begin{eqnarray*}
\CL_{0,\gamma}(\BBone_{J})(x)
&=& \sum_{i, \exists x' \in U_i\ F(x') = x} |F'(x')|^{-\gamma} \\
&\le& \sum_i K \left(\frac{\diam(U_i)}{\diam(U)}\right)^{\gamma} \\
&\le& \sum_i
K \ \frac{\area(U_i)}{\area(U)} 
\left( \frac{K}{\lambda_0} \right)^{\gamma-2} \\
&\le& K^{\gamma-1} \lambda_0^{-(\gamma-2)} \sum_i \frac{\area(U_i)}{\area(U)}. 
\end{eqnarray*}
Since the regions $U_i$ are pairwise disjoint, the sum in the final 
line $\le 1$, so our choice of $K$
gives that the above quantity is bounded by $1$.
Therefore the radius of convergence $\lambda_{0,\gamma} \le 1$.
Taking the logarithm and using
Abramov's formula, we find that the pressure $\CP(-\gamma \phi) \le 0$.
\end{proof}

Department of Mathematics\\
University of Surrey\\
Guildford, Surrey, GU2 7XH\\
United Kingdom\\
\texttt{H.Bruin@surrey.ac.uk}\\
\texttt{http://personal.maths.surrey.ac.uk/st/H.Bruin/}
\\[3mm]
D\'epartement de  Math\'ematiques\\
Universit\'e de Brest\\
6, avenue Victor Le Gorgeu\\
C.S. 93837, France \\
\texttt{Renaud.Leplaideur@univ-brest.fr}\\
\texttt{http://www.math.univ-brest.fr/perso/renaud.leplaideur}

\end{document}